\newtheorem{theorem}{Theorem}
\newtheorem{lemma}[theorem]{Lemma}
\newtheorem{corollary}[theorem]{Corollary}
\newtheorem{claim}[theorem]{Claim}
\newtheorem{obs}[theorem]{Observation}
\newtheorem{definition}[theorem]{Definition}
\newcommand{\eps}{\varepsilon}
\newcommand{\cond}{\left|\right.}
\newcommand{\timescale}{T}
\newcommand{\infec}{A}
\newcommand{\ninfecrat}{\alpha}
\newcommand{\linfec}{B}
\newcommand{\nlinfecappr}{b}
\newcommand{\nlinfecrat}{\beta}
\newcommand{\uinfec}{C}
\newcommand{\nuinfecappr}{c}
\newcommand{\nuinfecrat}{\gamma}
\newcommand{\infedge}{F}
\newcommand{\dom}{\mathcal{S}}
\newcommand{\cross}{\mathcal{N}}
\newcommand{\overlap}{\mathcal{W}}
\newcommand{\multi}{\mathcal{H}}
\newcommand{\coll}{\mathcal{C}}
\newcommand{\kset}{K}
\newcommand{\bpo}{M}
\newcommand{\bps}{M'}
\newcommand{\bpd}{Z}
\newcommand{\act}{\widehat{C}}
\newcommand{\convexpath}[2]{
   [   
   create hullcoords/.code={
     \global\edef\namelist{#1}
     \foreach [count=\counter] \nodename in \namelist {
       \global\edef\numberofnodes{\counter}
       \coordinate (hullcoord\counter) at (\nodename);
     }
     \coordinate (hullcoord0) at (hullcoord\numberofnodes);
     \pgfmathtruncatemacro\lastnumber{\numberofnodes+1}
     \coordinate (hullcoord\lastnumber) at (hullcoord1);
   },
   create hullcoords
   ]
   ($(hullcoord1)!#2!-90:(hullcoord0)$)
   \foreach [
   evaluate=\currentnode as \previousnode using \currentnode-1,
   evaluate=\currentnode as \nextnode using \currentnode+1
   ] \currentnode in {1,...,\numberofnodes} {
     let \p1 = ($(hullcoord\currentnode) - (hullcoord\previousnode)$),
     \n1 = {atan2(\y1,\x1) + 90},
     \p2 = ($(hullcoord\nextnode) - (hullcoord\currentnode)$),
     \n2 = {atan2(\y2,\x2) + 90},
     \n{delta} = {Mod(\n2-\n1,360) - 360}
     in 
     {arc [start angle=\n1, delta angle=\n{delta}, radius=#2]}
     -- ($(hullcoord\nextnode)!#2!-90:(hullcoord\currentnode)$) 
   }
 }
\title{Bootstrap Percolation on the Binomial Random $k$-uniform Hypergraph}
\author{Mihyun Kang}
\email{kang@math.tugraz.at}
\address{Institute of Discrete Mathematics\\ Graz University of Technology\\8010 Graz, Austria}
\author{Christoph Koch}	
\email{ckoch@math.tugraz.at}
\address{Institute of Discrete Mathematics\\ Graz University of Technology\\8010 Graz, Austria}	
\author{Tam\'as Makai}
\email{makai@math.lmu.de}
\address{Department of Mathematics\\ LMU Munich\\ 80333 Munich, Germany}
\date{}
\begin{document}
\begin{abstract}
We investigate the behaviour of $r$-neighbourhood bootstrap percolation  on the binomial $k$-uniform random hypergraph $H_k(n,p)$ for given integers $k\geq 2$ and $r\geq 2$. In $r$-neighbourhood bootstrap percolation, infection spreads through the hypergraph, starting from a set of initially infected vertices, and in each subsequent step of the process every vertex with at least $r$ infected neighbours becomes infected. 
For our analysis the set of initially infected vertices is chosen uniformly at random from all sets of given size.  In the regime $n^{-1}\ll n^{k-2}p \ll n^{-1/r}$ 
we establish a threshold such that if the number of initially infected vertices remains below the threshold, then with high probability only a few additional vertices become infected, while if the number of initially infected vertices exceeds the threshold then with high probability almost every vertex becomes infected. In fact we show that the probability of failure decreases exponentially.
\end{abstract}
\maketitle

\section{Introduction and main result}

The $r$-neighbourhood bootstrap percolation process, which we refer to as $r$-bootstrap percolation from this point on,  
was introduced by Chalupa, Leath, and Reich \cite{bootstrapintr} in the context of magnetic disordered systems. 
Since then $r$-bootstrap percolation and its extensions have been used to describe several complex propogation phenomena: from neuronal activity \cite{MR2728841,inhbootstrap} to the dynamics of the Ising model at zero temperature \cite{Fontes02stretchedexponential}.

In $r$-bootstrap percolation infection spreads over the vertices of a graph, step by step following a deterministic rule. Starting from a set of initially infected vertices, in each subsequent step every uninfected vertex, which has at least $r$ infected neighbours, becomes infected. All other vertices remain uninfected. Once a vertex has become infected, it remains infected forever.  

Kang, Koch and Makai \cite{KANG2015595} introduced an extension of $r$-bootstrap percolation to {\em hypergraphs}. Starting from a set of initially infected vertices in every subsequent step every uninfected vertex becomes infected if it has at least $r$ infected neighbours within the hypergraph, that is, there are at least $r$ infected vertices within the set of hyperedges containing the uninfected vertex. (We note that this definition generalises $r$-bootstrap percolation to the hypergraph setting in a weak sense, and a generalisation in a strong sense will be discussed in Section~\ref{sec:relatedwork}.)

When considering $r$-bootstrap percolation the key question is the relation between the set of initially infected vertices and the set of vertices infected by the end of the process.
Janson,  \L uczak, Turova, and Vallier \cite[Theorem 3.1]{MR3025687} analysed $r$-bootstrap percolation on the binomial random graph $G(n,p)$ where every edge appears independently with probability $p$ and the set of initially infected vertices, denoted by $\infec(0)$, is chosen uniformly at random from all vertex sets of size $a$. They showed, among others, that for $r\geq 2$ and $n^{-1}\ll p \ll n^{-1/r}$ there is a phase transition with threshold $(1-1/r)\left(\tfrac{(r-1)!}{np^r}\right)^{1/(r-1)}$ in terms of the number of initially infected vertices. In~\cite{MR3817532}, this result was strengthened by providing an exponential bound on the error probability. More precisely, we have the following result on the size of $\infec_f$, the set of vertices which become infected during the percolation process.

\begin{theorem}[\cite{MR3025687, MR3817532}]
Given an integer $r\geq 2$, consider $r$-bootstrap percolation  on $G(n,p)$ when $n^{-1}\ll p\ll n^{-1/r}$. Assume that the initial infection set is chosen uniformly at random from the family of all sets of vertices of size $a=a(n)$. Then for any fixed $\eps>0$, with probability at least $1-\exp\left(-\Omega\left((np^r)^{-1/(r-1)}\right)\right)$, the following hold.
\begin{enumerate}[label=$(\roman*)$]
\item  If $a\le (1-\eps)(1-1/r)\left(\tfrac{(r-1)!}{np^r}\right)^{1/(r-1)}$, then $|\infec_f|=O\left(\left(\tfrac{1}{np^r}\right)^{1/(r-1)}\right)$.
\item  If $a\ge (1+\eps)(1-1/r)\left(\tfrac{(r-1)!}{np^r}\right)^{1/(r-1)}$, then $|\infec_f|=(1-o(1))\, n$. 
\end{enumerate}
\end{theorem}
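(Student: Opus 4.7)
The plan is to analyze the process via the standard ``principle of deferred decisions'' exploration from \cite{MR3025687, MR3817532}. At each step I would maintain a partition of the infected set into \emph{active} and \emph{used} vertices. At step $t$ I pick an active vertex, declare it used, and expose independently with probability $p$ its potential edges to vertices outside $\infec$. Each uninfected vertex keeps a counter of exposed edges from used vertices and joins $\infec$ as active the moment its counter reaches $r$. The process stops when no active vertex remains. Let $Z(t)$ denote the number of new infections after $t$ used vertices: the active-vertex count is $S(t) = a + Z(t) - t$, and the final infected set has size $a + Z(t_{\mathrm{end}})$ where $t_{\mathrm{end}}$ is the first $t$ with $S(t) = 0$.

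By the deferred-decisions setup, conditional on the history, the counters of still-uninfected vertices are independent $\operatorname{Bin}(t,p)$ random variables, making $Z(t)$ a sum of independent indicators with
$$
\mathbb{E}[Z(t)] = (n-a)\,\Pr[\operatorname{Bin}(t,p)\geq r] = (1+o(1))\,\frac{n(tp)^r}{r!}
$$
uniformly for $tp=o(1)$. Set $g(t) := a + n(tp)^r/r! - t$; this convex function attains its minimum at $t^* := \bigl((r-1)!/(np^r)\bigr)^{1/(r-1)}$ with value $g(t^*) = a - (1-1/r)\, t^*$. Since $np^r \ll 1$ we have $t^* \to \infty$, which will drive the exponential failure probability. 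A standard Chernoff argument, combined with the monotonicity of $Z(t)$ in $t$ to promote pointwise bounds on a logarithmic grid to uniform bounds, should give $|Z(t) - \mathbb{E} Z(t)| = o(t^*)$ uniformly on $[0, Ct^*]$ with failure probability $\exp(-\Omega(t^*)) = \exp(-\Omega((np^r)^{-1/(r-1)}))$.

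For Case $(i)$, the assumption $a \leq (1-\eps)(1-1/r)t^*$ yields $g(t^*) \leq -\eps(1-1/r)t^*$, so on the concentration event $S$ must vanish at some $t = \Theta(t^*)$, giving $|\infec_f| = a + Z(t_{\mathrm{end}}) = O(t^*)$. For Case $(ii)$, one has $g(t) \geq \eps(1-1/r)t^*$ uniformly, so on the concentration event $S$ stays positive throughout the ``sparse'' window $t \leq c/p$. Once $t$ reaches order $1/p$ the typical uninfected vertex has $\Omega(\log n)$ exposed edges into the used set, so a separate (routine) tail estimate extends the process and shows that all but $o(n)$ vertices become infected, giving $|\infec_f| = (1-o(1))n$.

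The principal obstacle is securing the uniform concentration of $Z(t)$ with exponential failure probability across the critical window $t = (1\pm o(1))t^*$: the pointwise fluctuations are of order $\sqrt{t^*}$, while the drift $g(t)-g(t^*)$ is only quadratic in $t-t^*$, so a naive grid-and-union argument loses too much and one needs to exploit monotonicity (or a martingale refinement) to get exponential decay. A secondary technical point is the passage from the Taylor regime $tp \ll 1$, where $\Pr[\operatorname{Bin}(t,p)\geq r] \sim (tp)^r/r!$ is accurate, to the regime $tp \gtrsim 1$ needed to close out Case $(ii)$, but once the avalanche starts this is handled by a standard union bound over uninfected vertices.
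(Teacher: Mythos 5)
This theorem is not proved in the paper at all: it is quoted from \cite{MR3025687, MR3817532}, and your proposal essentially reconstructs the standard proof from that literature, namely the vertex-by-vertex exploration with deferred decisions, in which each still-uninfected vertex carries an independent $\mathrm{Bin}(t,p)$ counter, $S(t)=a+Z(t)-t$, and the phase transition is read off from the minimum of $g(t)=a+n(tp)^r/r!-t$ at $t^*$. That is a genuinely different route from the machinery this paper builds for its hypergraph generalisation: there the process is analysed round by round (generation by generation) rather than one activated vertex at a time, with an upper coupling (the query-process) controlled via the FKG inequality, per-step trajectories, and a Galton--Watson/Dwass-identity argument for the tail in the subcritical case, and a lower coupling (mild bootstrap percolation) with trajectories, a Chernoff-type bound for dependent indicators, and a martingale finish in the supercritical case --- a decomposition forced by the hyperedge-induced dependencies, which your exploration argument avoids precisely because in $G(n,p)$ the counters really are independent binomials. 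Two points in your sketch deserve correction, though neither is fatal. First, for fixed $\varepsilon>0$ the drift margin at the bottleneck is $\varepsilon(1-1/r)t^*$, i.e.\ linear in $t^*$, not merely quadratic in $t-t^*$; the quadratic degeneracy only bites for the sharper ``inside the scaling window'' results, so here a single Chernoff bound at $t^*$ settles case $(i)$, and for case $(ii)$ a geometric grid plus monotonicity works because the exponents at grid points $t=2^jt^*$ grow geometrically, which is what beats the union bound even when $t^*\ll\log n$. Second, at $t\asymp 1/p$ a typical uninfected vertex has only $\Theta(1)$ (not $\Omega(\log n)$) exposed edges into the used set; the correct closing step is that $\Theta(n)$ vertices are then infected, after which each remaining vertex has $\mathrm{Bin}(\Theta(n),p)$ infected neighbours with mean $\Theta(np)\to\infty$, and a first-moment-plus-concentration argument (not a union bound over all vertices, since $np$ need not exceed $\log n$) yields $|\infec_f|=(1-o(1))n$ with the claimed error probability.
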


In this paper we generalise this result for the binomial random $k$-uniform hypergraph $H_k(n,p)$, where every hyperedge (which is a $k$-element subset of the vertex set $[n]:=\{1,\ldots, n\}$) is present independently with probability $p$. In this setting, unlike the graph case, the infection of two different vertices is no longer independent, since hyperedges may contain multiple uninfected vertices. The major challenge lies in understanding these dependencies and analysing their impact on the infection process.

\subsection{Main result}

Throughout the paper, for integers $k\geq 2$ and $r\geq 2$ we set
\begin{equation}\label{eq:eta}
 \eta:=\eta(k,r)=
	\begin{cases}
		1 & \mbox{if } r\geq 3, \\
		2k-3 & \mbox{if } r = 2,
	\end{cases}
\end{equation}
and define
\begin{equation}\label{eq:scaling}
	a^*:=a^*(k,r,n,p)=
		\left(\frac{(r-1)!}{\eta n\left(\binom{n}{k-2}p\right)^r}\right)^{1/(r-1)}.
\end{equation}
In addition, we set
\begin{equation}\label{eq:threshold}
a_c:=a_c(k,r,n,p)=\left(1-\frac{1}{r}\right)a^*.
\end{equation}

\begin{theorem}\label{thm:hypergraph}
Given integers $k\geq 2$ and $r\geq 2$,  consider $r$-bootstrap percolation  on $H_{k}(n,p)$ when $n^{-1}\ll n^{k-2} p \ll n^{-1/r}$. Assume the initial infection set is chosen uniformly at random from the family of all sets of vertices of size $a=a(n)$. Then for any fixed $\varepsilon>0$, with probability $1-\exp(-\Omega(a^*))$ the following hold.
	\begin{enumerate}[label=$(\roman*)$]
		\item  If $a\leq(1-\varepsilon)a_c$, then $|A_f|\le a^*$.\label{case:sub}
		\item  If $a\geq (1+\varepsilon)a_c$, then $|A_f|=(1-o(1))\, n$.\label{case:super}
	\end{enumerate}
\end{theorem}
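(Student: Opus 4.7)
The plan is to follow the activation-queue framework of Janson, {\L}uczak, Turova, and Vallier (sharpened to exponential error bounds in the cited follow-up), adapting it to handle the new dependencies created by hyperedges. I set up a sequential exploration: starting from $\infec(0)=\infec$, maintain a queue of infected-but-unactivated vertices; at each step, pop a vertex $u$, reveal every hyperedge through $u$ not previously revealed, and for each uninfected $v$ lying in a freshly revealed hyperedge $f$ increment a score $\ndom(v)$ by the number of infected vertices in $f$ not already counted towards $v$'s score. When $\ndom(v)$ reaches $r$, add $v$ to the queue. This generates a filtration $(\filtration_t)$ along which the queue size $\bpd(t):=|\infec(t)|-\bpo(t)$ (with $\bpo(t)$ the number of activations by step $t$) is tracked; the process terminates when $\bpd=0$, at which point $|\infec_f|=\bpo(t)$.

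The key computation is the expected increment of the cumulative infection count. Two prescribed vertices share a common hyperedge with probability $q:=(1+o(1))\binom{n}{k-2}p$, so at first approximation the score of an uninfected $v$ after $\bpo(t)$ activations is distributed like $\mathrm{Bin}(\bpo(t),q)$, which would give the graph-case threshold with $p$ replaced by $q$. The role of $\eta$ is to absorb the extra contributions coming from hyperedges that simultaneously contain several infected vertices. For $r\ge 3$ these contributions are lower order under $n^{k-2}p\ll n^{-1/r}$, so $\eta=1$. For $r=2$, hyperedges containing two infected vertices can immediately push the score of their uninfected members to $2$; a careful second moment calculation combining ``two separate hyperedges, each with one infected neighbour'' and ``one hyperedge with two infected neighbours'' yields the factor $\eta=2k-3$.

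Given this drift analysis, for part \ref{case:sub} one obtains (schematically)
\[
\mathbb{E}[\bpd(t+1)-\bpd(t)\mid \filtration_t]\le -1 + \eta\,\tfrac{\bpo(t)^{r-1}}{(r-1)!}\,n\,q^{r}\,(1+o(1)),
\]
which is bounded above by a negative constant whenever $\bpo(t)\le(1-\varepsilon/2)\,a^*$. Combining this with Freedman's inequality---the per-step increments are $O(\log n)$ with probability $1-\exp(-\Omega(a^*))$, and the conditional variance is controlled by the drift---gives $\Pr[\max_t \bpd(t)\ge a^*/2]\le\exp(-\Omega(a^*))$ and the desired bound $|\infec_f|\le a^*$. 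For part \ref{case:super}, I split the process into a growth phase (for $|\infec(t)|\le n/\log n$, the drift is a positive constant times $\varepsilon$ and concentration guarantees the queue survives until $|\infec(t)|$ reaches $n/\log n$) and a bulk phase (once $|\infec(t)|\ge n/\log n$, every uninfected vertex has $\omega(r)$ infected neighbours with failure probability $\exp(-\omega(a^*))$ by Chernoff applied with $n^{k-2}p\gg n^{-1}$), together forcing $|\infec_f|=(1-o(1))n$.

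The principal obstacle is establishing the drift formula with the correct factor $\eta=2k-3$ for $r=2$: this requires a careful combinatorial accounting of all hyperedge configurations producing score $\ge 2$ for a single uninfected vertex, checking that the contributions from ``clustered'' hyperedges (containing multiple infected vertices) combine properly with the ``singleton'' contributions to give exactly $2k-3$. A secondary technical difficulty is verifying the bounded-increment condition needed for Freedman's inequality uniformly throughout the subcritical and growth phases, which reduces to showing that with probability $1-\exp(-\Omega(a^*))$ no single activation triggers more than $O(\log n)$ new infections; this in turn follows from the sparsity estimate $q\cdot a^*=o(1)$ implied by $n^{k-2}p\ll n^{-1/r}$.
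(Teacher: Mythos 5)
Your plan is a natural adaptation of the Janson--{\L}uczak--Turova--Vallier activation-queue method, but as written it has two concrete gaps. First, the exploration rule only updates scores at the moment a hyperedge is freshly revealed, so infections occurring later inside an already-revealed hyperedge are never propagated: if $w$ becomes infected through two hyperedges $f_1,f_2$ each containing one previously infected vertex, then when $w$ is activated $f_1,f_2$ are not fresh, and the remaining vertices of $f_1,f_2$ (which for $r=2$ genuinely have two infected neighbours, namely $w$ and the old one) never have their scores raised. This secondary cascade, giving $2(k-2)$ extra infections per such event, is exactly the source of $\eta=2k-3$; the mechanism you propose instead (a single hyperedge containing two infected vertices) contributes a factor of order $\binom{n}{k-3}p\big/\binom{n}{k-2}^2p^2=\Theta\big(1/(n^{k-1}p)\big)=o(1)$ relative to the main term, hence cannot produce $2k-3$. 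Consequently your process is a strict undercount of bootstrap percolation: for part (i) it does not upper-bound $|A_f|$ at all, and for part (ii) with $r=2$, $k\ge 3$ it would only percolate above a threshold that is too large by the factor $2k-3$. The paper avoids this by building an explicit upper coupling (the query-process with star, widely-overlapping, heavily-infected and neutron-star-adjacent collections) for the subcritical side, and a lower coupling (mild bootstrap percolation, where for $r=2$ the closed neighbourhoods of newly completed vertices are infected) for the supercritical side.

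Second, the concentration step does not deliver the stated error probability. Even granting increments bounded by $O(\log n)$, Freedman's inequality at deviation $\lambda=\Theta(a^*)$ gives at best $\exp\big(-\Omega(a^*/\log n)\big)$ because of the $R\lambda$ term, not $\exp(-\Omega(a^*))$; moreover the claim that no activation triggers more than $O(\log n)$ infections with probability $1-\exp(-\Omega(a^*))$ is unsupported (and false when $a^*$ is polynomial in $n$, since the relevant tail is only quasi-polynomially small). The paper's route is designed around this: FKG yields stochastic domination of each generation's infection count by binomials with $O(1)$ weights (Corollary~\ref{lem:stochdom}), a Chernoff bound for dependent indicators (Lemma~\ref{lem:negcorr}) handles the cross-vertex correlations in the supercritical phase, and the late stages are finished by a Dwass-identity branching argument (subcritical) and a two-step argument with Azuma (supercritical), which together give the full $\exp(-\Omega(a^*))$. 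Relatedly, your bulk-phase trigger $|A(t)|\ge n/\log n$ is too early at the sparse end of the regime: the expected number of infected neighbours is then only $\omega(1/\log n)$ (it can be $O(1)$ when $n^{k-2}p$ is just above $n^{-1}$), so ``every uninfected vertex has $\omega(r)$ infected neighbours'' does not follow; the paper instead pushes the controlled growth to about $(n^{k-2}p)^{-1}$ infected vertices and then needs two further carefully analysed steps to finish.
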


Theorem~\ref{thm:hypergraph} shows in particular  that there is a phase transition with respect to the behaviour of $r$-bootstrap percolation when the number of initially infected vertices reaches $a_c$. If the number of initially infected vertices remains below the threshold $a_c$, then only a few additional vertices become infected. On the other hand, if the number of initially infected vertices is larger than the threshold, then almost every vertex becomes infected by the end of the percolation process. 

\subsection{Related results}\label{sec:relatedwork}
Recall that the key question concerning $r$-bootstrap percolation is to establish the behaviour of the final infection set with respect to the set of initially infected vertices. 
More precisely the typical questions asked are: when does a random subset lead to (almost) every vertex becoming infected, what is the size of the smallest subset which leads to (almost) every vertex becoming infected, and the running time of the process. 
These questions have been investigated over a wide range of graph classes. 
Various deterministic graphs have been considered such as trees \cite{MR2248323,MR2430783}, lattices \cite{MR968311,MR2888224,MR2546747,MR1921442,MR1961342}, hypercubes \cite{MR2214907} and expanders \cite{MR3451155}.
Several random graph models have also been investigated, including the binomial random graph \cite{MR3784495,MR4328426,MR3719944,MR3025687,MR3817532,MR3958417,V07}, the Chung-Lu model \cite{bootpower,arXiv:1402.2815,MR3784494}, preferential attachment graphs \cite{MR3783205} and many others \cite{MR2595485,MR2283230,MR3426518,MR3992287,falgas-ravry_sarkar_2023,MR4206553}. 

Less is known when the underlying structure for $r$-bootstrap percolation is a hypergraph. Recently Cooley and Zalla \cite{CZ2022} analysed the high-order generalisation of $r$-bootstrap percolation in complete $k$-uniform hypergraphs.

A generalisation of $r$-bootstrap percolation to hypergraphs in the strong sense can also be defined. Again starting from a set of infected vertices, in each subsequent step every uninfected vertex $v$ becomes infected if it is contained in at least $r$ hyperedges, where every vertex other than $v$ is infected.
When $r=1$ this model was introduced, under the name of hypergraph bootstrap percolation by Balogh, Bollob\'as, Morris and Riordan \cite{BALOGH20121328}.
In addition to 1-boostrap percolation, hypergraph bootstrap percolation generalises a second bootstrap process called {\em graph bootstrap percolation}, which was introduced by Bolob\'as \cite{MR0244077} under the name weak saturation in 1968. Graph bootstrap percolation acts on the edge set of a graph. More precisely, for a given graph $F$, in each step of the process every edge is added which is the last edge required to create a new copy of $F$. The connection between the above two percolation processes can be observed, when the edges in the graph correspond to vertices in the hypergraph and the hyperedges correspond to the edge sets in copies of $F$.

Graph bootstrap percolation, and consequently hypergraph boostrap percolation, have received significant attention: see e.g.,\ \cite{MR3784495,MR3668849,MR4372097,MR4278608} and the references within.

\subsection{Heuristics for the percolation threshold}\label{ssec:heuristic}
In the remainder of the paper we assume that  $k,r\in \mathbb N_{\geq 2}$  are given and $n\in \mathbb N,  p\in (0,1)$ satisfy $$n^{-1}\ll n^{k-2} p \ll n^{-1/r}.$$
We call a $k$-element subset of $[n]$ simply a {\em $k$-set}.

There is a natural heuristic argument for the evolution of the number of infected vertices over time, which leads to the percolation threshold $a_c$ as in \eqref{eq:threshold}.

Recall that $\infec(0)$ denotes the set of initially infected vertices.  For each $t\in \mathbb N$ we denote by $\infec(t)$ the set of vertices infected by the end of step $t$.
Our heuristic begins with an assumption that the evolution of $\infec(t)$ would be well-approximated by a smooth function $\varphi$, whose behaviour characterises the existence of a \emph{bottleneck} in the process. We provide a heuristic suggesting both what $\varphi$ should be and why this leads to the choice of $a^*$ and the percolation threshold $a_c$. For simplicity we focus on the case $r\ge 3$. In step $t+1\in \mathbb N$, a typical vertex becomes infected due to being contained in exactly $r$ hyperedges where each of these hyperedges contains precisely one (distinct) vertex from $\infec(t)$ and in addition at least one of these hyperedges contains a vertex in $\infec(t)\setminus \infec(t-1)$. Thus, at least heuristically speaking, the approximation
\begin{align*}
\mathbb{E}\left[|\infec(t+1)|\cond \infec(t),\ldots,\infec(0)\right] &\approx |\infec(t)|+n\sum_{i=1}^{r}\frac{(|\infec(t)|-|\infec(t-1)|)^{i}|\infec(t-1)|^{r-i}}{i! (r-i)!}\left(\binom{n}{k-2}p\right)^r
\end{align*}
should be valid as long as only a small fraction of the vertices are infected. In fact, the right-hand side simplifies to $|\infec(t)|+\frac{|\infec(t)|^r-|\infec(t-1)|^r}{r!}n\left(\binom{n}{k-2}p\right)^r$. Iterating yields a telescoping sum and we obtain
\begin{equation}\label{eq:forwardstep}
\mathbb{E}\left[|\infec(t+1)|\cond \infec(t),\ldots,\infec(0)\right] \approx |\infec(0)|+\frac{|\infec(t)|^r}{r!}n\left(\binom{n}{k-2}p\right)^r,
\end{equation}
where we used $\infec(-1):=\emptyset$. Assuming further that $|\infec(t)|$ will be well-approximated by its mean we obtain the forward difference equation 
\begin{equation}\label{eq:forwardDifference}
|\infec(\lfloor (x+1/\timescale)\timescale\rfloor)|-|\infec(\lfloor x\timescale\rfloor)|\approx\frac{|\infec(\lfloor x\timescale\rfloor)|^r}{r!}n\left(\binom{n}{k-2}p\right)^r - |\infec(\lfloor x\timescale\rfloor)| + |\infec(0)|,
\end{equation}
where we have rescaled time as $t=\lfloor x\timescale\rfloor$ for some (arbitrary) $\timescale=\timescale(n)\gg 1$. Now recall that the $r$-bootstrap percolation process stops if and only if $|\infec(\lfloor (x+1/\timescale)\timescale\rfloor)|=|\infec(\lfloor x\timescale\rfloor)|$, in other words, when the forward difference vanishes. 
Rescaling by setting $\alpha(x):= |\infec(\lfloor x\timescale\rfloor)|\left(n\left[\binom{n}{k-2}p\right]^r\right)^{\frac{1}{r-1}}$, from \eqref{eq:forwardDifference} we obtain a simpler form
\begin{equation}\label{eq:alphaDifference}
\ninfecrat(x+1/\timescale)-\ninfecrat(x)\approx\frac{\ninfecrat(x)^r}{r!} - \ninfecrat(x) + \ninfecrat(0).
\end{equation}
Now we observe that if $\ninfecrat$ were to follow a smooth trajectory $\varphi\colon [0,\infty)\to \mathbb{R}$, i.e.,\ $\ninfecrat(x)\approx \varphi(x)$, its derivative $g'$ should obey $\varphi'(x)\approx\ninfecrat(x+1/\timescale)-\ninfecrat(x)$ and thus $\varphi$ should satisfy the following differential equation
\begin{equation}\label{eq:ODE}
\varphi'(x)=\frac{\varphi(x)^r}{r!} - \varphi(x) + \varphi(0),
\end{equation}
where \begin{equation}\label{eq:f(0)}
	\varphi(0)=|\infec(0)|\left(n\left[\binom{n}{k-2}p\right]^r\right)^{\frac{1}{r-1}}. 
\end{equation}	
As mentioned before the process stops, when $\varphi'(x)=0$ and in the following we will examine for which values of $\varphi(0)$ does $\varphi'(x)$ have a non-negative root. 

In order to achieve this we will examine whether the process has a bottleneck, to be characterised by an inflection point, which would translate to an extremal point of $\varphi$ satisfying $\varphi''(x_c)=0$.
By differentiating $\varphi'(x)$ once more, we obtain 
$$
\varphi''(x)=\left(\frac{\varphi(x)^{r-1}}{(r-1)!} - 1\right) \varphi'(x).
$$
Recalling our assumption that $\ninfecrat(x)\approx \varphi(x)$ and the fact that $\ninfecrat(\cdot)$ is  positive and non-decreasing,  we would also assume  $\varphi'(x)\ge 0$. In addition we expect the process to stop if $\varphi'(x)=0$, so we are only interested in the behaviour of the function as long as $\varphi'(x)>0$. 
Therefore 
the bottleneck point should be defined as the earliest point in time such that  $\frac{\varphi(x)^{r-1}}{(r-1)!} - 1=0$, i.e., 
$$
x_c:=\inf\left\{x\ge 0\colon \varphi(x)=\left[(r-1)!\right]^{1/(r-1)}\right\}.
$$
As $r$-bootstrap percolation is monotone in the number of initially infected vertices, intuitively the threshold for almost every vertex becoming infected should be when the process stops right as it reaches the bottleneck. That is,
$$\frac{\varphi(x_c)^r}{r!} - \varphi(x_c) + \varphi(0)=0.$$
As we are assuming the bottleneck exists, that is, $x_c<\infty$, a simple calculation implies that this occurs when
$$\varphi(0)=\varphi_c:=\left(1-\frac{1}{r}\right)\left[(r-1)!\right]^{1/(r-1)}.$$
In addition, the above implies that, if $\varphi(0)\le \varphi_c$ in \eqref{eq:ODE} then $\varphi'(x)$ has at least one non-negative root, while if $\varphi(0)>\varphi_c$ no such root exists.
Using \eqref{eq:f(0)},  ultimately the above indicates a threshold for $r$-bootstrap percolation at
$$a_c=\left(1-\frac{1}{r}\right)\left(\frac{(r-1)!}{n\left(\binom{n}{k-2}p\right)^r}\right)^{1/(r-1)},$$
matching the result from \eqref{eq:threshold}. 

\subsection{Proof outline}

When the number of infected vertices is small, that is, $o((n^{k-2}p)^{-1})$, typically infection occurs when $r$ hyperedges intersect in exactly one uninfected vertex and each of these hyperedges contains exactly one infected vertex. For any $r\ge 2$ this implies that the vertex in the intersection of the hyperedges becomes infected. However, if $r=2$, then in the following step both of these hyperedges will contain 2 infected vertices and thus every other vertex in these hyperedges will be infected as well, leading to $2k-3$ infected vertices in total instead of one and to the definition of $\eta$ in \eqref{eq:eta}.

In the subcritical regime, when $a<a_c$, 
our aim is to show that at most $a^*$ vertices become infected. 
We start in Section~\ref{sec:uppercoupling1} by modifying the process, in a manner that leads to more infected vertices (Lemma~\ref{lem:bootstrapsubset}), but is easier to analyse. 

In Section~\ref{sec:subcritical} we analyse the behaviour of this modified process.
The FKG inequality (Theorem~\ref{thm:FKG}) implies that the number of vertices infected in a step is stochastically dominated by the sum of weighted Bernoulli random variables (Corollary~\ref{lem:stochdom}). 
Next we provide a trajectory, which resembles the solution of the differential equation in \eqref{eq:ODE}, and prove that it also has a maximum (Claim~\ref{lem:boundx}). As long as the expected number of vertices infected in each step is large, which occurs in the early stages of the process, we analyse the process step by step. In every step we show that, provided the number of infected vertices remained below the trajectory in each of the previous steps this will also hold in the following step with sufficiently large probability (Lemma~\ref{lem:upperstep}). 

When the number of vertices infected in a single step is small this strategy no longer provides sufficiently tight probability bounds and we establish an upper bound on the total number of vertices infected after this point. Note that any vertex which becomes infected in step $t+1$ must have a neighbour which became infected in step $t$. 
Based on this we can construct a forest, by selecting exactly one neighbouring vertex which was infected in the previous step. This forest is generated by a branching process, where the number of offspring for the individual vertices depends on the size of the previous generations, that is, the previously infected vertices. Nevertheless, based on the Dwass identity for Galton-Watson branching processes (Theorem~\ref{thm:Dwass}) we show  
that the total number of infected vertices remains small (Lemma~\ref{lem:branching}). 

In the supercritical regime, when $a>a_c$, we differentiate between the vertices depending on the number of infected neighbours it has, more precisely we consider the sets of vertices with at least $0,\ldots,r-1,r$ infected neighbours. Then the set of infected vertices is the union of the set of initially infected vertices and the set of vertices with at least $r$ infected neighbours. Similarly to the subcritical regime we establish a trajectory for the number of vertices in each of these parts, where the trajectory for the number of infected vertices resembles  \eqref{eq:ODE}. As a first step we show that until $(n^{k-2}p)^{-1}$ vertices become infected the number of vertices in each part remain above their respective trajectories with sufficiently large probability. Note that the number of additional infected neighbours the individual vertices acquire in a given step are dependent random variables.
We adapt a Chernoff-type bound for {\em dependent} random variables (Lemma~\ref{lem:negcorr}) to establish the probability that the number of vertices in the individual parts remain above their trajectories after every step (Lemma~\ref{lem:uindstep}). The number of infected vertices will eventually exceed $(n^{k-2}p)^{-1}$ (Lemma~\ref{lem:mininf}), at which point our estimates break down. A slightly modified versions of the concentration result (Lemma~\ref{lem:penultimatestep}) and a martingale argument (Lemma~\ref{lem:finalstep}) imply that once the number of infected vertices reaches $(n^{k-2}p)^{-1}$ almost every vertex becomes infected in the following two steps with sufficiently high probability. These results are shown in Section~\ref{sec:supercritical}.

\section{Preliminaries}

The following concentration result of McDiarmid~\cite{MR1678578} will be used throughout the paper.
\begin{theorem}\label{thm:indconc}\cite[Theorem~2.7]{MR1678578}
	Let $(X_i)_{1\leq i\leq n})$ be a sequence of   independent random variables.  Define  $X:=\sum_{i=1}^n X_i$. If there exists an $M\in \mathbb{R}^+$ such that for each $1\leq i \leq n$, $X_i\leq \mathbb{E}[X_i]+M$, then for all $\vartheta\ge 0$ we have
	$$\mathbb{P}[X\geq \mathbb{E}[X]+\vartheta]\leq \exp\left(-\frac{\vartheta^2}{2\left(\mathrm{Var}(X)+M\vartheta/3 \right)}\right).$$
\end{theorem}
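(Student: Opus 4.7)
The plan is to apply the exponential moment method (Chernoff bound). For any $\lambda > 0$, Markov's inequality gives $\mathbb{P}[X - \mathbb{E}[X] \geq \vartheta] \leq e^{-\lambda \vartheta}\,\mathbb{E}[e^{\lambda(X - \mathbb{E}[X])}]$, and independence of the $X_i$ factors the moment generating function as $\prod_{i=1}^n \mathbb{E}[e^{\lambda(X_i - \mathbb{E}[X_i])}]$. The task thus reduces to bounding the MGF of each centred variable $Y_i := X_i - \mathbb{E}[X_i]$, which satisfies $\mathbb{E}[Y_i] = 0$ and the one-sided bound $Y_i \leq M$ almost surely, and then optimising over the free parameter~$\lambda$. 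The case $\vartheta = 0$ is trivial, so assume $\vartheta > 0$.

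The crucial analytic step is to show that for every $\lambda \in (0, 3/M)$,
\[
\mathbb{E}\bigl[e^{\lambda Y_i}\bigr] \leq \exp\!\left(\frac{\lambda^2\,\mathrm{Var}(Y_i)/2}{1 - \lambda M/3}\right).
\]
I would derive this in two moves. First, the map $y \mapsto (e^{\lambda y} - 1 - \lambda y)/y^2$, extended by continuity at $0$, is non-decreasing in $y$; applied to $Y_i \leq M$ this yields the pointwise inequality $e^{\lambda Y_i} - 1 - \lambda Y_i \leq Y_i^2 (e^{\lambda M} - 1 - \lambda M)/M^2$. Taking expectations, using $\mathbb{E}[Y_i] = 0$, and then $1 + x \leq e^x$ gives $\mathbb{E}[e^{\lambda Y_i}] \leq \exp(\mathrm{Var}(Y_i)(e^{\lambda M}-1-\lambda M)/M^2)$. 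Second, the elementary inequality $(e^u - 1 - u)/u^2 \leq 1/[2(1 - u/3)]$ for $u \in [0,3)$, established by comparing the power series $\sum_{k \geq 0} u^k/(k+2)!$ coefficientwise with $\tfrac{1}{2}\sum_{k \geq 0}(u/3)^k$, converts the previous bound into the stated MGF estimate.

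Multiplying over $i$ and inserting into the Chernoff inequality produces, for every $\lambda \in (0, 3/M)$,
\[
\mathbb{P}[X - \mathbb{E}[X] \geq \vartheta] \leq \exp\!\left(-\lambda\vartheta + \frac{\lambda^2\,\mathrm{Var}(X)/2}{1 - \lambda M/3}\right).
\]
The final step is to choose $\lambda = \vartheta/(\mathrm{Var}(X) + M\vartheta/3)$, which automatically lies in $(0,3/M)$; a short algebraic simplification collapses the exponent to $-\vartheta^2/[2(\mathrm{Var}(X) + M\vartheta/3)]$, yielding the claim. The main obstacle is the analytic inequality $(e^u-1-u)/u^2 \leq 1/[2(1-u/3)]$, since it is precisely the constant $1/3$ that produces the $M\vartheta/3$ term in the denominator of the conclusion; the monotonicity claim and the Chernoff optimisation are routine. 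This is the classical Bernstein-type argument underlying McDiarmid~\cite[Theorem~2.7]{MR1678578}.
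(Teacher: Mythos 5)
Your proof is correct. Note first that the paper does not prove this statement at all: it is quoted verbatim from McDiarmid \cite[Theorem~2.7]{MR1678578}, and the only place the paper carries out this kind of argument is the appendix proof of the related Lemma~\ref{lem:negcorr}. Compared with that argument (and with McDiarmid's original proof), you start the same way -- Chernoff bound, factorisation by independence, and the MGF estimate obtained from the monotonicity of $x\mapsto(e^x-1-x)/x^2$ (the paper's Lemma~\ref{McDvar}) -- but you finish differently. The paper's route chooses the Bennett-optimal parameter $h=\ln(1+\vartheta/\mathrm{Var})$, arrives at a Bennett-type exponent, and then converts it to the stated Bernstein form via the inequality $(1+x)\ln(1+x)-x\ge 3x^2/(6+2x)$ (Lemma~\ref{chtub}); you instead bound $(e^u-1-u)/u^2\le 1/[2(1-u/3)]$ by the coefficientwise series comparison $(k+2)!\ge 2\cdot 3^k$ and then optimise directly with the rational choice $\lambda=\vartheta/(\mathrm{Var}(X)+M\vartheta/3)$, whose exponent collapses exactly to $-\vartheta^2/[2(\mathrm{Var}(X)+M\vartheta/3)]$. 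Both are valid; the paper's route proves the stronger Bennett bound as a by-product, while yours is more self-contained and purely algebraic, avoiding the logarithmic parameter. The only cosmetic gap is the degenerate case $\mathrm{Var}(X)=0$ with $\vartheta>0$, where your $\lambda$ equals $3/M$ rather than lying strictly below it; there $X=\mathbb{E}[X]$ almost surely, so the probability is $0$ and the bound holds trivially -- worth one sentence, nothing more.
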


The above inequality holds for independent random variables. We can prove that a similar bound also holds for  {\em dependent} Bernoulli random variables satisfying mild conditions, which strengthens an earlier result due to Panconesi and Srinivasan~\cite{MR1438520}.

\begin{lemma}\label{lem:negcorr}
	For $1\leq i \leq n$ let $X_i$ be (not necessarily independent) Bernoulli random variables. Assume that there exists  a constant  $\lambda>0$ and a set of independent Bernoulli random variables $\widehat{X}_i$ for $1\leq i\leq n$ such that for every $I\subseteq [n]$ we have
	$$\mathbb{E}\left[\prod_{i\in I} X_i\right]\leq \lambda \mathbb{E}\left[\prod_{i\in I} \widehat{X}_i\right].$$
	Define $X:=\sum_{i=1}^n X_i$ and $\widehat{X}:=\sum_{i=1}^n\widehat{X}_i$. Then,  for all $\vartheta\ge 0$ we have
	$$\mathbb{P}[X\geq \mathbb{E}[\widehat{X}]+\vartheta]\leq \lambda \exp\left(-\frac{\vartheta^2}{2(\mathrm{Var}(\widehat{X})+\vartheta/3)}\right).$$
\end{lemma}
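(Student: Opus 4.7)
The plan is to carry out the standard exponential-moment (Laplace transform) argument, but arranged so that the dependencies between the $X_i$'s enter \emph{only} through the hypothesis on joint product moments. The key linearizing observation is that since each $X_i\in\{0,1\}$, for any $s\in\mathbb{R}$ one has $e^{sX_i}=1+(e^s-1)X_i$. Multiplying over $i$ and expanding the product term by term gives
\[
e^{sX}=\prod_{i=1}^{n}\bigl(1+(e^s-1)X_i\bigr)=\sum_{I\subseteq[n]}(e^s-1)^{|I|}\prod_{i\in I}X_i,
\]
and exactly the same identity holds with $\widehat{X}$ and $\widehat{X}_i$ in place of $X$ and $X_i$.

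Now I would restrict to $s\geq 0$ so that every coefficient $(e^s-1)^{|I|}$ is non-negative. Then the hypothesis can be applied termwise: taking expectations and using linearity,
\[
\mathbb{E}[e^{sX}]=\sum_{I}(e^s-1)^{|I|}\mathbb{E}\Bigl[\prod_{i\in I}X_i\Bigr]\;\le\;\lambda\sum_{I}(e^s-1)^{|I|}\mathbb{E}\Bigl[\prod_{i\in I}\widehat{X}_i\Bigr]=\lambda\,\mathbb{E}[e^{s\widehat{X}}].
\]
This is the only place where the hypothesis is used, and the multiplicative factor $\lambda$ will now simply propagate through the rest of the argument.

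The remaining step is routine. By Markov's inequality, for every $s\geq 0$,
\[
\mathbb{P}\bigl[X\ge\mathbb{E}[\widehat{X}]+\vartheta\bigr]\le e^{-s(\mathbb{E}[\widehat{X}]+\vartheta)}\,\mathbb{E}[e^{sX}]\le\lambda\,e^{-s(\mathbb{E}[\widehat{X}]+\vartheta)}\,\mathbb{E}[e^{s\widehat{X}}].
\]
The second factor is exactly the quantity that the standard Bernstein-style proof of Theorem~\ref{thm:indconc} optimizes: since the $\widehat{X}_i$ are independent with $\widehat{X}_i\in\{0,1\}$, one has $\widehat{X}_i\le\mathbb{E}[\widehat{X}_i]+1$, so applying Theorem~\ref{thm:indconc}'s exponential-moment estimate (with $M=1$) and choosing $s=\log(1+\vartheta/\mathrm{Var}(\widehat{X}))$ gives
\[
\inf_{s\ge 0}e^{-s(\mathbb{E}[\widehat{X}]+\vartheta)}\,\mathbb{E}[e^{s\widehat{X}}]\le\exp\!\left(-\frac{\vartheta^2}{2(\mathrm{Var}(\widehat{X})+\vartheta/3)}\right).
\]
Since $\lambda$ does not depend on $s$, the infimum and the prefactor $\lambda$ combine, yielding the claimed bound.

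The only conceptual step, and the one I would want to flag as the genuine content, is the moment-generating-function comparison $\mathbb{E}[e^{sX}]\le\lambda\,\mathbb{E}[e^{s\widehat{X}}]$; it is what lets a hypothesis stated only on joint \emph{product moments} (rather than, say, negative correlation of arbitrary events) drive a two-sided Chernoff/Bernstein bound. Its correctness hinges on the non-negativity of the coefficients $(e^s-1)^{|I|}$ for $s\ge 0$, which is why the argument yields only an upper-tail bound. Everything after that is the standard Chernoff--Bernstein optimization already contained in Theorem~\ref{thm:indconc}, so no essential new obstacle arises.
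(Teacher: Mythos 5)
Your proof is correct and follows essentially the same route as the paper's: both establish the moment-generating-function comparison $\mathbb{E}[e^{sX}]\le\lambda\,\mathbb{E}[e^{s\widehat X}]$ by applying the product-moment hypothesis termwise (you via the identity $e^{sX_i}=1+(e^s-1)X_i$ and a subset expansion with nonnegative coefficients for $s\ge 0$, the paper via the Taylor expansion of $e^{hX}$ and termwise comparison of the moments of $X$), and then run the standard Chernoff--Bernstein optimization with $s=\ln\bigl(1+\vartheta/\mathrm{Var}(\widehat X)\bigr)$, which the paper carries out explicitly using McDiarmid's auxiliary lemmas. Your subset expansion is a marginally cleaner way to make the termwise comparison (a finite sum, with the reduction of repeated Bernoulli factors handled automatically), but the argument is the same.
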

The proof of this result can be found in Appendix~\ref{sec:conc}. 
The following result on the total number of progeny in a branching process also plays an important role.
\begin{theorem}[Dwass Identity]\label{thm:Dwass}\cite{MR0253433}
For an arbitrary Galton-Watson process let $Z_i$ denote the number of individuals in the $i$-th generation 
and let $Z:=\sum_{i=0}^{\infty} Z_i$. Then, for all integers $m,\ell\ge 0$ we have
$$\mathbb{P}[Z=m|Z_0=\ell]=\frac{\ell}{m}\mathbb{P}[Z_1=m-\ell|Z_0=m].$$
\end{theorem}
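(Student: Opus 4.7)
The plan is to reduce the identity to the cycle lemma (Dvoretzky--Motzkin) via the classical random-walk encoding of a Galton--Watson forest. Let $X_1, X_2, \ldots$ be i.i.d.\ copies of the offspring distribution, set $Y_i := X_i - 1 \ge -1$ and $S_k := \sum_{i=1}^k Y_i$, and explore the forest started from $\ell$ roots one individual at a time, appending each individual's offspring to a stack. After $k$ exploration steps the stack has size $\ell + S_k$, so the forest has total progeny exactly $m$ if and only if
\begin{equation*}
S_k > -\ell \quad \text{for } 1 \le k \le m-1, \qquad S_m = -\ell.
\end{equation*}
Hence $\mathbb{P}[Z = m \mid Z_0 = \ell]$ equals the probability that $S$ first reaches $-\ell$ at time $m$, while $\mathbb{P}[Z_1 = m - \ell \mid Z_0 = m] = \mathbb{P}[X_1 + \cdots + X_m = m - \ell] = \mathbb{P}[S_m = -\ell]$.

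The core combinatorial input is the cycle lemma: for any integer sequence $y = (y_1, \ldots, y_m)$ with $y_i \ge -1$ and $\sum_i y_i = -\ell$, exactly $\ell$ of the $m$ cyclic rotations $(y_{j+1}, \ldots, y_{j+m})$, $j = 0, \ldots, m-1$, have the property that the associated walk first reaches $-\ell$ at step $m$. I would prove this by extending the walk aperiodically so that $\tilde{S}_{k+m} = \tilde{S}_k - \ell$: because each step is at least $-1$, every strict descending record of $\tilde{S}$ decreases the running minimum by exactly one, so there are precisely $\ell$ such records in the window $[m, 2m-1]$, and rotation $j$ is ``nice'' if and only if $j + m$ is one of these records.

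To close the argument I would combine the cycle lemma with exchangeability. Writing $N(y)$ for the number of nice cyclic rotations of $y$, independence of the $Y_i$ implies $\mathbb{P}[(Y_1, \ldots, Y_m) = y] = \mathbb{P}[(Y_1, \ldots, Y_m) = \sigma(y)]$ for every cyclic shift $\sigma$; evaluating the double sum $\sum_\sigma \sum_y \mathbb{P}[(Y_1, \ldots, Y_m) = y]\,\mathbf{1}[\sigma(y)\text{ nice}]$ in two orders yields
\begin{equation*}
m \cdot \mathbb{P}\bigl[\text{walk first hits } -\ell \text{ at time } m\bigr] \,=\, \sum_y \mathbb{P}[(Y_1, \ldots, Y_m) = y]\,N(y) \,=\, \ell \cdot \mathbb{P}[S_m = -\ell],
\end{equation*}
and dividing by $m$ gives the Dwass identity after reinterpreting the two sides via the walk encoding. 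The main obstacle is the cycle-lemma step, in particular verifying the claim about the number and location of descending records and the resulting bijection with nice offsets; the probabilistic consequence via exchangeability is then routine.
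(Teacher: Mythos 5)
Your proposal is correct, but note that the paper does not prove this statement at all: Theorem~\ref{thm:Dwass} is quoted from Dwass's original article and used as a black box, so there is no in-paper argument to compare against. What you have written is the standard modern proof of the Otter--Dwass formula: encode the forest grown from $\ell$ roots by its \L{}ukasiewicz walk $S_k=\sum_{i\le k}(X_i-1)$ (this requires fixing an exploration order, e.g.\ depth-first, so that total progeny $m$ corresponds exactly to the first-passage event $\{S_k>-\ell \text{ for } k<m,\ S_m=-\ell\}$), observe that $\mathbb{P}[Z_1=m-\ell\mid Z_0=m]=\mathbb{P}[S_m=-\ell]$, and convert first passage into an unconstrained event via the Dvoretzky--Motzkin cycle lemma plus invariance of the i.i.d.\ law under cyclic shifts. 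Your sketch of the cycle lemma is sound: with the periodic extension $\tilde S_{k+m}=\tilde S_k-\ell$, rotation $j$ is ``nice'' precisely when $j+m$ is a strict all-time descending record of $\tilde S$ (the reduction from ``record within the window'' to ``all-time record'' uses $\tilde S_{i+m}=\tilde S_i-\ell$), and since each step is at least $-1$ the running minimum drops by exactly one unit at each record, giving exactly $\ell$ records, hence $\ell$ nice offsets, in $[m,2m-1]$. The double count over the $m$ shifts then yields $m\,\mathbb{P}[Z=m\mid Z_0=\ell]=\ell\,\mathbb{P}[S_m=-\ell]$ as you state. Two small points to make explicit in a written-up version: the identity should be read for $m\ge 1$ (the factor $\ell/m$ is undefined at $m=0$), and the degenerate cases $\ell=0$ or $m<\ell$ hold trivially since both sides vanish, so the walk argument only needs $m\ge\ell\ge 1$. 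Compared with Dwass's original derivation (which proceeds through ballot-type identities for sums of i.i.d.\ variables rather than through an explicit tree-to-path bijection), your route is more combinatorial and self-contained, which would be a reasonable thing to include if one wanted the paper to be fully self-contained.
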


We also apply the FKG inequality (see e.g.,~\cite[Theorem 2.12]{JLR}) several times in our proofs. A hypergraph property is called \emph{increasing} if it is preserved under the addition of hyperedges, and it is \emph{decreasing} if it is preserved under the removal of hyperedges.
\begin{theorem}[FKG inequality]\label{thm:FKG}
	Let $A$ be an increasing hypergraph property and $B$ be a decreasing hypergraph property. Denote by $\mathcal{A}$ and $\mathcal{B}$ the events that, in the random $k$-uniform hypergraph where each hyperedge is selected independently with some probability $p_e$ has property $A$ and $B$, respectively. Then we have
	$$\mathbb{P}[\mathcal{A} \cap \mathcal{B}]\leq \mathbb{P}[\mathcal{A}]\mathbb{P}[\mathcal{B}].$$
\end{theorem}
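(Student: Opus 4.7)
The plan is to deduce Theorem~\ref{thm:FKG} from the Harris inequality for product measures on $\{0,1\}^N$, which is the standard way to prove FKG in the context of random hypergraphs and is the approach taken in \cite{JLR}. First I would encode $H_k(n,p)$ as a product measure: enumerate the $N = \binom{n}{k}$ possible hyperedges, let $X_e \in \{0,1\}$ be the indicator that hyperedge $e$ is present, and endow $\{0,1\}^N$ with the product of the Bernoulli$(p_e)$ laws. Equip $\{0,1\}^N$ with the coordinate-wise partial order. By definition, increasing (resp.\ decreasing) hypergraph properties correspond to upward-closed (resp.\ downward-closed) subsets of $\{0,1\}^N$, so $f := \mathbf{1}_{\mathcal{A}}$ is non-decreasing and $g := \mathbf{1}_{\mathcal{B}}$ is non-increasing. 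The statement then reduces to proving $\mathbb{E}[fg] \leq \mathbb{E}[f]\,\mathbb{E}[g]$.

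Next I would prove this by induction on $N$. For the base case $N=1$, writing $p = \mathbb{P}[X_1=1]$, a direct computation gives
$$\mathbb{E}[fg] - \mathbb{E}[f]\,\mathbb{E}[g] = p(1-p)\bigl(f(1)-f(0)\bigr)\bigl(g(1)-g(0)\bigr),$$
which is non-positive because $f(1) \geq f(0)$ and $g(1) \leq g(0)$.

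For the inductive step I would condition on the last coordinate. Set $f_i(x_1,\ldots,x_{N-1}) := f(x_1,\ldots,x_{N-1},i)$ and $g_i$ analogously for $i \in \{0,1\}$; let $p := p_N$. Then
$$\mathbb{E}[fg] = p\,\mathbb{E}[f_1 g_1] + (1-p)\,\mathbb{E}[f_0 g_0],$$
and each $f_i$ (resp.\ $g_i$) is non-decreasing (resp.\ non-increasing) on $\{0,1\}^{N-1}$. Applying the inductive hypothesis to the pairs $(f_0,g_0)$ and $(f_1,g_1)$ yields $\mathbb{E}[f_i g_i] \leq \mathbb{E}[f_i]\,\mathbb{E}[g_i]$. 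Substituting into the above and expanding $\mathbb{E}[f]\mathbb{E}[g] = \bigl(p\,\mathbb{E}[f_1] + (1-p)\mathbb{E}[f_0]\bigr)\bigl(p\,\mathbb{E}[g_1] + (1-p)\mathbb{E}[g_0]\bigr)$ yields
$$\mathbb{E}[fg] - \mathbb{E}[f]\,\mathbb{E}[g] \leq p(1-p)\bigl(\mathbb{E}[f_1]-\mathbb{E}[f_0]\bigr)\bigl(\mathbb{E}[g_1]-\mathbb{E}[g_0]\bigr),$$
and the right side is non-positive since pointwise $f_1 \geq f_0$ and $g_1 \leq g_0$. Specialising $f = \mathbf{1}_{\mathcal{A}}$ and $g = \mathbf{1}_{\mathcal{B}}$ then gives $\mathbb{P}[\mathcal{A} \cap \mathcal{B}] \leq \mathbb{P}[\mathcal{A}]\,\mathbb{P}[\mathcal{B}]$, as required.

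There is no substantial obstacle here; the only subtlety is verifying that the slices $f_i$ and $g_i$ remain monotone in $(x_1,\ldots,x_{N-1})$, which is immediate from the coordinate-wise order. In practice I would simply cite \cite[Theorem~2.12]{JLR}, as the statement is a textbook consequence of Harris's inequality; the above induction is the shortest self-contained derivation.
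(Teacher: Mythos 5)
Your proposal is correct: the coordinate-wise induction (Harris's argument) is the standard proof, and your base-case and inductive-step computations both check out, including the case of hyperedge-dependent probabilities $p_e$. The paper itself gives no proof of Theorem~\ref{thm:FKG} — it simply cites \cite[Theorem~2.12]{JLR} — so your closing remark that one would in practice just cite that result is exactly what the paper does, and your self-contained derivation is a correct filling-in of that citation.
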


Finally we will need the well-known Azuma's inequality (see e.g.,~\cite[Theorem 2.25]{JLR})
\begin{theorem}[Azuma's inequality]\label{thm:azuma}
Let $(M_t)_{t\in \mathbb{N}_0}$ be a real-valued supermartingale and let $N\in \mathbb{N}$. Suppose that $|M_i-M_{i-1}|\leq c_i$ for all $1\leq i\leq N$. Then for all $\vartheta \ge 0$ we have
\[
    \mathbb{P}[X_N-X_0\ge \vartheta]\leq \exp\left(-\frac{\vartheta^2}{2\sum_{i=1}^{N}c^2_i}\right).
\]
\end{theorem}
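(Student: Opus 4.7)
The plan is to prove Azuma's inequality by the standard exponential-moment (Chernoff) method adapted to martingale differences. First, by replacing $M_t$ with $M_t - M_0$ if necessary, I may assume without loss of generality that $M_0 = 0$, so the goal reduces to bounding $\mathbb{P}[M_N \ge \vartheta]$. For any parameter $\lambda > 0$, Markov's inequality applied to the non-negative random variable $e^{\lambda M_N}$ yields $\mathbb{P}[M_N \ge \vartheta] \le e^{-\lambda \vartheta}\,\mathbb{E}[e^{\lambda M_N}]$, so everything reduces to an upper bound on the moment generating function of $M_N$, which I will eventually optimise in $\lambda$.

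The core technical step will be a conditional version of Hoeffding's lemma. Writing $D_i := M_i - M_{i-1}$ for the differences and $\mathcal{F}_i$ for the natural filtration, I have $|D_i| \le c_i$ and, since $(M_t)$ is a supermartingale, $\mathbb{E}[D_i \mid \mathcal{F}_{i-1}] \le 0$. Using convexity of $x \mapsto e^{\lambda x}$ on $[-c_i, c_i]$, namely $e^{\lambda x} \le \tfrac{c_i + x}{2c_i}e^{\lambda c_i} + \tfrac{c_i - x}{2c_i}e^{-\lambda c_i}$, and taking conditional expectation on $\mathcal{F}_{i-1}$, the linear-in-$D_i$ term contributes a factor proportional to $\mathbb{E}[D_i \mid \mathcal{F}_{i-1}]\cdot \sinh(\lambda c_i) \le 0$, which can be discarded. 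What remains is the bound $\mathbb{E}[e^{\lambda D_i} \mid \mathcal{F}_{i-1}] \le \cosh(\lambda c_i) \le e^{\lambda^2 c_i^2 / 2}$, where the final inequality follows from a termwise comparison of the Taylor series of $\cosh$ and of the exponential.

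With this conditional bound in hand, the full moment generating function telescopes. By the tower property, $\mathbb{E}[e^{\lambda M_N}] = \mathbb{E}\bigl[e^{\lambda M_{N-1}}\,\mathbb{E}[e^{\lambda D_N} \mid \mathcal{F}_{N-1}]\bigr] \le e^{\lambda^2 c_N^2 / 2}\,\mathbb{E}[e^{\lambda M_{N-1}}]$, and induction on $N$ gives $\mathbb{E}[e^{\lambda M_N}] \le \exp\bigl(\tfrac{\lambda^2}{2}\sum_{i=1}^N c_i^2\bigr)$. Combining this with Markov's inequality yields $\mathbb{P}[M_N \ge \vartheta] \le \exp\bigl(\tfrac{\lambda^2}{2}\sum_{i=1}^N c_i^2 - \lambda \vartheta\bigr)$; choosing $\lambda := \vartheta / \sum_{i=1}^N c_i^2$ minimises the right-hand side and produces exactly the stated tail bound.

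The main obstacle is the conditional Hoeffding step: specifically, checking that the supermartingale condition $\mathbb{E}[D_i \mid \mathcal{F}_{i-1}] \le 0$ (rather than the equality of the martingale case) still suffices. The argument outlined above works precisely because we are bounding an upper tail with $\lambda > 0$, so the coefficient $\sinh(\lambda c_i)$ multiplying $\mathbb{E}[D_i \mid \mathcal{F}_{i-1}]$ in the conditional moment generating function is positive, meaning the non-positive conditional mean only helps us and can be dropped in the inequality. Everything else is routine iterated conditioning and an optimisation in $\lambda$.
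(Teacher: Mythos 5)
Your proof is correct: the convexity bound, the observation that the supermartingale condition $\mathbb{E}[D_i \mid \mathcal{F}_{i-1}]\le 0$ lets you drop the $\sinh$ term for $\lambda>0$, the bound $\cosh(\lambda c_i)\le e^{\lambda^2 c_i^2/2}$, the telescoping via the tower property, and the optimisation $\lambda=\vartheta/\sum_{i=1}^N c_i^2$ are all valid and yield exactly the stated bound (note the $X_N-X_0$ in the statement is a typo for $M_N-M_0$, which you implicitly and correctly read it as). The paper does not prove this result but cites it as a standard reference (Janson--{\L}uczak--Ruci\'nski, Theorem 2.25), and your argument is essentially that standard Chernoff--Hoeffding proof, so nothing further is needed.
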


\section{Preliminaries for subcritical regime: the query-process}\label{sec:uppercoupling1}
In Sections~\ref{sec:uppercoupling1} and \ref{sec:subcritical} we consider the subcritical case, when the initial number of infected vertices is below $a_c$. More precisely, throughout these sections  we assume that  a constant $0<\varepsilon<1$ is given and the size of the initial infection set satisfies
$$a:=|\infec(0)|=(1-\varepsilon)a_c.$$

In order to simplify the analysis, we modify the process slightly, in such a way that the modified process provides a {\em superset} of the
set of vertices infected in the original process (Section~\ref{sec:uppercoupling}). 
We will refer to this modified process as the {\em query-process}.
The query-process queries collections of $k$-sets, where we establish whether every $k$-set in a collection is a hyperedge or not. Vertices become infected only if every hyperedge within a collection is present.

We show that starting from the same infection set every vertex which is infected via $r$-bootstrap percolation is also infected by the query-process (Lemma~\ref{lem:bootstrapsubset}). We also establish that the trajectory for the number of infected vertices in the query-process remains below $a^*$ (Lemma~\ref{lem:ubound}).

The main advantage of the modification is that the probability whether every hyperedge in a collection is present in the hypergraph is stochastically dominated by a sequence of independent random variables, which depend only on the hyperedge probability $p$ and the number of hyperedges in the collection (Lemma~\ref{lem:indepbound}) and thus the number of infected vertices is stochastically dominated by the sum of weighted Bernoulli random variables (Lemma~\ref{lem:stochdom}). 
However, the number of {\em queries} depends on the history of the process, more precisely it depends on the number of vertices infected in the previous step and the total number of vertices infected until this step. Based on this we can show that, as long as the number of infected vertices remains below a certain trajectory in each of the previous steps and the expected number of infected vertices is large enough, the number of infected vertices in the next step will also remain below the trajectory (Lemma~\ref{lem:upperstep}). 

However, once the expected number of infected vertices becomes small, the previous method no longer provides a sufficiently  tight concentration. Note that any vertex which becomes infected in step $t$ must have a neighbour which became infected in step $t-1$. 
In fact, once the number of vertices infected in a given step is sufficiently small compared to the total number of infected vertices, a typical vertex will have exactly one such neighbour. Based on this we can construct a forest, by connecting every infected vertex to exactly one of its neighbours which was infected during the previous step. This forest is generated by a process resembling a branching process, with the main difference that the number of offspring depends on the size of the previous generations. Using the Dwass identity (Theorem~\ref{thm:Dwass}) we show that the number of vertices which become infected during the remaining stages of the process is small (Lemmas~\ref{lem:branching}). 

\subsection{An upper coupling}\label{sec:uppercoupling}
We start by describing an upper coupling, which we call the {\em query-process}, informally and thereby introducing terminologies that will be used later.

The query-process is also an infection process which starts out from a set of initially infected vertices. 
At the beginning of each step we construct multiple families, where each family is a collection of $k$-sets. 
The rest of the step is divided into rounds, where in each round we \emph{examine} one collection in the family. First it is decided whether the collection of $k$-sets will be \emph{queried} based on the results of all the previous queries made during the current and the previous steps. If a collection is not queried then it is \emph{discarded}. A query is \emph{successful} if every $k$-set in the collection is a hyperedge. Every hyperedge within a successful query is called \emph{exposed}.

Now we turn to the construction of the family of collections of $k$-sets. Essentially our aim is to infect every vertex which has at least $r$ infected neighbours or this should happen if every collection in the family was queried. In addition, certain collections are added to the family in order to compensate for vertices with $r$ infected neighbours, which can be found in discarded collections. The constructions are displayed in Figure~\ref{fig:constructions}.

We start by adding every collection of $r$ many  $k$-sets which intersect in exactly one vertex $v$ and each $k$-set contains exactly one previously infected vertex. We call such a collection a {\em star} collection (e.g., the triple of the dotted $k$-sets in Figure~\ref{fig:constructions}(a). Any successful query of a star collection will result either in every vertex within the $k$-set becoming infected (when $r=2$) or in the vertex $v$ becoming infected (when $r\ge 3$). In a subcritical query-process 
almost every vertex infected in a given step is contained in a star.

Now if a vertex has $r$ infected neighbours, which are not in a star collection, then either two of the hyperedges overlap in more than one vertex or a hyperedge contains multiple infected vertices.

We add every collection of pairs of $k$-sets which overlap in at least 2 uninfected vertices and both hyperedges contain at least one infected vertex. Such a collection is called a {\em widely-overlapping} collection (e.g., the pair of the dotted $k$-sets in Figure~\ref{fig:constructions}(b)). Successfully querying such a collection leads to the infection of every vertex in both of the $k$-sets.

Afterwards we add every collection of $k$-sets containing at least two infected vertices, which have not been exposed until this step. When successfully queried, every vertex in the $k$-set becomes infected. Such a collection is called a {\em heavily-infected} collection (e.g., the dotted $k$-set in Figure 1(c)).

Finally we consider the hyperedges which have been exposed and still contain uninfected vertices. Note that only hyperedges in stars when $r\ge 3$ can contain uninfected vertices. We add every collection consisting of a $k$-set, containing an infected vertex which intersects an exposed hyperedge in an uninfected vertex.
Such a collection is a so-called {\em neutron-star-adjacent} (e.g., the dotted $k$-set in Figure 1(d)).
If such a hyperedge is successfully queried, then we infect every vertex in the hyperedge and also every vertex in any exposed hyperedge it intersects in an uninfected vertex.

We will introduce a final criteria to reduce the number of times a collection of $k$-sets is added to each family. Ideally every collection of $k$-sets would be added to each family at most once, e.g.,\ once a star is added it will never be added again.  Note that this still allows for collections to be added to a different family, e.g.,\ if a collection consisting of a single $k$-set was added to the family of heavily-infected collections it can be later added to the family of neutron-star-adjacent collections. Instead of working with the complicated conditions required for each collection to be added at most once, we use simpler, but weaker conditions, in the sense that the number of times a collection is added is significantly reduced, but some collection of $k$-sets can be added multiple times.  

If none of the collections has been discarded, or equivalently if every collection was queried, then every vertex which has at least $r$ infected neighbours would become infected. However, in order to simplify the analysis of the process, some of these collections will not be queried. Thus the query process might contain uninfected vertices with $r$ infected neighbours. In Lemma~\ref{lem:bootstrapsubset} we show that any such vertex will become infected at a later stage in the process. 

\begin{figure}[ht]
	\begin{center}
		\begin{tikzpicture}[scale=0.7]
			
			\node (S0) at (-4,0) [circle, draw, inner sep=2pt] {};
			
			\node (S1) at (-5,-1) [circle, draw, inner sep=2pt] {};
			\node (S2) at (-6,-2) [circle, draw, inner sep=2pt] {};
			\node (S3) at (-7,-3) [circle, fill=black, inner sep=2pt] {};
			
			\node (S4) at (-5,0) [circle, draw, inner sep=2pt] {};
			\node (S5) at (-6,0) [circle, draw, inner sep=2pt] {};
			\node (S6) at (-7,0) [circle, fill=black, inner sep=2pt] {};
			
			\node (S7) at (-5,1) [circle, draw, inner sep=2pt] {};
			\node (S8) at (-6,2) [circle, draw, inner sep=2pt] {};
			\node (S9) at (-7,3) [circle, fill=black, inner sep=2pt] {};
			
			\draw[black,dashed,very thick]\convexpath{S0,S3}{0.27cm};
			\draw[black,dashed,very thick]\convexpath{S0,S6}{0.27cm};
			\draw[black,dashed,very thick]\convexpath{S0,S9}{0.27cm};
			
			\node at (-6,-4.5){(a) a star collection};

			\node (O0) at (1,0) [circle,draw, inner sep=2pt] {};
			\node (O1) at (0,0) [circle,draw, inner sep=2pt] {};
			
			\node (O2) at (-1,-1) [circle,draw, inner sep=2pt] {};
			\node (O3) at (-2,-2) [circle,fill=black, inner sep=2pt] {};
			
			\node (O4) at (-1,1) [circle,draw, inner sep=2pt] {};
			\node (O5) at (-2,2) [circle,fill=black, inner sep=2pt] {};
			
			\draw[black,dashed,very thick]\convexpath{O3,O1,O0}{0.27cm};
			\draw[black,dashed,very thick]\convexpath{O5,O0,O1}{0.27cm};
			
			\node at (-0.9,-4.5)[text width=3.5cm]{(b) a widely-overlapping collection};
			
			\node (M0) at (6,0) [circle,draw, inner sep=2pt] {};
			\node (M1) at (5,0) [circle, draw, inner sep=2pt] {};
			\node (M2) at (4,0) [circle, fill=black, inner sep=2pt] {};
			\node (M3) at (3,0) [circle, fill=black, inner sep=2pt] {};
			
			\draw[black,dashed,very thick]\convexpath{M0,M3}{0.27cm};
			
			\node at (4.5,-4.5)[text width=3cm]{(c) a heavily-infected collection};
			
			\node (L0) at (11,0) [circle,fill=black, inner sep=2pt] {};
			
			\node (L1) at (10,-1) [circle, draw, inner sep=2pt] {};
			\node (L2) at (9,-2) [circle, draw, inner sep=2pt] {};
			\node (L3) at (8,-3) [circle, fill=black, inner sep=2pt] {};
			
			\node (L4) at (10,0) [circle, draw, inner sep=2pt] {};
			\node (L5) at (9,0) [circle, draw, inner sep=2pt] {};
			\node (L6) at (8,0) [circle, fill=black, inner sep=2pt] {};
			
			\node (L7) at (10,1) [circle, draw, inner sep=2pt] {};
			\node (L8) at (9,2) [circle, draw, inner sep=2pt] {};
			\node (L9) at (8,3) [circle, fill=black, inner sep=2pt] {};
			
			\node (L10) at (8,2) [circle, fill=black, inner sep=2pt] {};
			\node (L11) at (9,2) [circle, draw, inner sep=2pt] {};
			\node (L13) at (10,2) [circle, draw, inner sep=2pt] {};			
			\node (L12) at (11,2) [circle, draw, inner sep=2pt] {};
			
			\draw[black,very thick]\convexpath{L0,L3}{0.27cm};
			\draw[black,very thick]\convexpath{L0,L6}{0.27cm};
			\draw[black,very thick]\convexpath{L0,L9}{0.27cm};
			\draw[black,dashed,very thick]\convexpath{L10,L12}{0.27cm};
			
			\node at (9.5,-4.5)[text width=3cm] {(d) a neutron-star adjacent collection};

		\end{tikzpicture}
		\caption{Collections of the query process.\\ Filled circles represent infected vertices, empty circles represent uninfected vertices.\\
			Ovals represent exposed hyperedges, dashed ovals represent a collection of $k$-sets.}
		\label{fig:constructions}
	\end{center}
\end{figure}

\subsection{The query-process}\label{sec:query-process}
Consider a $k$-uniform hypergraph $H(V,E)$ with vertex set $V$ and hyperedge set $E$. We proceed to define the query-process formally. 
To this end we need further definitions and notation that will be used to describe and analyse the query-process.

Let $\linfec(0)$ denote the initial infection set in the  query-process.  For each $t\in \mathbb N$, denote by  
\begin{itemize}
	\item  $\linfec(t)$ the set of vertices infected by step $t$ (and its complement $\linfec(t)^{C}$);
	\item  $\infedge(t)$  the set of hyperedges exposed before the end of step $t$.
\end{itemize}

When considering the $(t+1)$-st step we rely on the sets $\linfec(t),\linfec(t-1),\infedge(t)$ and $\infedge(t-1)$. Note that $\infedge(0)=\emptyset$, and we extend the domain by setting $\linfec(-1)=\emptyset$,  $\infedge(-1)=\emptyset$ in order to cover the $t=0$ case.

Before providing the algorithm, we introduce the rigorous definitions for each family of collections of $k$-sets associated with the query-process we saw in Section~\ref{sec:uppercoupling}: star collections, widely-overlapping collections, heavily-infected collections and neutron-star-adjacent collections in Definitions~\ref{def:stars}, \ref{def:overlapping}, \ref{def:overinfected} and \ref{def:intersecting}.

First we deal with the family of star collections. Recall that a star consists of $r$ many $k$-sets which intersect in exactly one vertex and each $k$-set contains exactly one infected vertex. This is covered by the first three conditions of Definition~\ref{def:stars}, the first two conditions focus on the structural properties, while the third condition establishes that the required infected vertices are present. The final condition is there to ensure that we only consider collections, which have not been added as a star collection during a previous step, which we achieve by insisting that one of the infected vertices, within the $k$-sets of the star collection, was infected in the previous step.
\begin{definition}[Star collections; see Figure~\ref{fig:constructions}(a)]\label{def:stars}
	For each $t\in \mathbb N$, let 
	$\dom(t+1)$ be the family of $r$ many $k$-sets $\coll=\{\kset_1,\ldots,\kset_r\}$ satisfying the following conditions: 
	\begin{itemize}
		\item  there exists a unique uninfected vertex $v\in \linfec(t)^C$ within the intersection of the $k$-sets, that is, $\{v\}=\cap_{\kset\in\coll} \kset$; 
		\item any two $k$-sets  in  $\coll$ intersect in exactly one vertex;
		\item  every $\kset\in \coll$ contains exactly one infected vertex, that is, $|\kset\cap\linfec(t)|=1$;
		\item  there exists a $\kset\in \coll$, containing a vertex infected in the previous step, that is, $\kset\cap \linfec(t)\backslash\linfec(t-1)\neq \emptyset$.
	\end{itemize}
	We call  $\dom(t+1)$  {\em the family of star collections (at step $t+1$ of the query-process)}.
\end{definition}

We continue with the family of widely-overlapping collections. Similarly to star collections the first condition describes the structural property, that is, we have two hyperedges which overlap in at least 2 uninfected vertices, the second condition establishes that every hyperedge contains at least one infected vertex, while the last condition is there to reduce the number of times a collection is considered (as a widely-overlapping collection), that is, one of the $k$-sets must contain a vertex infected in the previous step.
\begin{definition}[Widely-overlapping collections; see Figure~\ref{fig:constructions}(b)]\label{def:overlapping}
	For each $t\in \mathbb N$, denote 
	by $\overlap(t+1)$ the family of pairs $\{\kset_1,\kset_2\}$ of $k$-sets  satisfying the following conditions:
	\begin{itemize}
		\item $\kset_1$ and $\kset_2$ overlap in at least 2 uninfected vertices, that is, $|\kset_1\cap\kset_2\cap\linfec(t)^C|\ge 2$;
		\item every $K\in \{K_1,K_2\}\}$ contains at least one infected vertex that is, $|K\cap B(t)|\geq 1$;
		\item $\kset_1$ or $\kset_2$ contains a vertex infected in the previous step, that is $(K_1\cup K_2)\cap(\linfec(t)\backslash\linfec(t-1))\neq \emptyset$. 
	\end{itemize}
	We call  $\overlap(t+1)$  {\em the family of widely-overlapping collections (at step $t+1$ of the query-process)}.
\end{definition}

Next we consider the family of heavily-infected collections. This time there are no structural conditions as the collections consist of exactly one $k$-set. The first condition establishes that the $k$-set contains multiple infected vertices, while the last condition is there to reduce the number of times a collection is added to the family of heavily-infected collections, that is, at least one of the vertices was infected in the previous step.
\begin{definition}[Heavily-infected collections; see Figure~\ref{fig:constructions}(c)]\label{def:overinfected}
	For each $t\in \mathbb N$, let 
	$\multi(t+1)$ be the family of $k$-sets $\kset$ which satisfy the following conditions:
	\begin{itemize}
		\item $\kset$ contains at least two infected vertices, that is, $|\kset\cap \linfec(t)|\ge 2$;
		\item $\kset$ contains at least one vertex infected in the previous step, that is, $\kset \cap \linfec(t)\backslash \linfec(t-1)\neq \emptyset$.
	\end{itemize}
	We call  $\multi(t+1)$   {\em the family of  heavily-infected collections (at step $t+1$ of the query-process)}.
\end{definition}

Finally we consider the family of neutron-star-adjacent collections. In contrast to the previous families, which only depend on the set of infected vertices at the end of the last two steps, this also depends on the set of exposed hyperedges at the end of the last two steps.
Here we are considering $k$-sets, which intersect an exposed hyperedge in an uninfected vertex. Unlike the previous cases, in order to reduce the number of times such a $k$-set is considered, we not only have to consider $k$-sets which contain vertices infected during the previous step, but also $k$-sets which intersect hyperedges exposed during the previous step.  
\begin{definition}[Neutron-star-adjacent collections; see Figure~\ref{fig:constructions}(d)]\label{def:intersecting}
	For each $t\in \mathbb N$, let  
	$\cross_1(t+1)$ denote the family of $k$-sets $\kset$ which satisfy the following conditions:
	\begin{itemize}
		\item $\kset$ contains a vertex infected in the previous step, that is, $\kset \cap \linfec(t)\backslash \linfec(t-1)\neq \emptyset$;
		\item there exists an uninfected vertex $u\in \linfec(t)^C$ such that $u\in K$ and $u$ is incident to an exposed hyperedge $e\in \infedge(t)$.
	\end{itemize}
	In addition, let $\cross_2(t+1)$ denote the family of $k$-sets $K$, which satisfy the following conditions:
	\begin{itemize}
		\item $\kset$ contains at least one infected vertex, that is, $\kset\cap \linfec(t)\neq \emptyset$;
		\item there exists an uninfected vertex $u\in \linfec^C(t)$ such that $u\in \kset$ and $u$ is incident to a hyperedge $e$ exposed in the previous step, that is, $e\in\infedge(t)\backslash\infedge(t-1)$.
	\end{itemize}
	Set $\cross(t+1):=\cross_1(t+1)\cup\cross_2(t+1)$. 
	We call  $\cross(t+1)$   {\em the family of  neutron-star-adjacent collections (at step $t+1$ of the query-process)}.
	
\end{definition}

Having established the families, Algorithm~\ref{alg:query-process} describes one step of the query-process.

\begin{algorithm} 
	\KwIn{$H(V,E),\,\dom(t+1),\overlap(t+1),\multi(t+1),\cross(t+1),\linfec(t),\infedge(t)$}
	$\Phi \leftarrow \infedge(t)$\\
	$B\leftarrow \linfec(t)$\\
	\ForAll{$\kset\in \cross(t+1)$}
	{ \If {$\kset$ is not a previously exposed hyperedge ($\kset\not\in \Phi$) \label{ln:crossexp}} 
		{ \If{$\kset\in E$ \label{ln:crossedge}}
			{The hyperedge $\kset$ is exposed ($\Phi\leftarrow \Phi \cup \{\kset\}$)\\
				Infect every vertex in every hyperedge $e\in \infedge(t)$ incident to an uninfected vertex in $K$ \label{ln:infcross1}\\
				Infect every vertex in $K$ \label{ln:infcross2}\\
				($B\leftarrow B \cup \kset \cup \{u\in e: e\in \infedge(t)\quad and \quad \kset \cap e \cap \linfec(t)^C\neq \emptyset\}$)
			}
		}
	}
	\ForAll{$\kset\in \multi(t+1)$}
	{ \If {$\kset$ is not a previously exposed hyperedge ($\kset\not\in \Phi$) \label{ln:multiexp}}
		{ \If{$\kset\in E$ \label{ln:multiedge}}
			{The hyperedge $\kset$ is exposed ($\Phi\leftarrow \Phi \cup \kset$)\\
				Infect every vertex in $\kset$ ($B\leftarrow B \cup \kset$)\label{ln:infmulti}}
		}
	}	
	\ForAll{$\{\kset_1,\kset_2\}\in  \overlap(t+1)$}
	{ \If {neither $\kset_1$ nor $\kset_2$ is an exposed hyperedge ($\{\kset_1,\kset_2\}\cap \Phi = \emptyset$)\label{ln:overlapexp}}
		{ \If{$\{\kset_1,\kset_2\}\subseteq E$\label{ln:overlapedge}}
			{The hyperedges $\kset_1$ and $\kset_2$ are exposed ($\Phi\leftarrow \Phi\cup \{\kset_1,\kset_2\}$)\\
				Infect every vertex in $\kset_1$ and $\kset_2$ ($B\leftarrow B \cup \kset_1\cup \kset_2$)\label{ln:infoverlap}}
		}	
	}
	\ForAll{$\{\kset_1,\ldots,\kset_r\}\in \dom(t+1)$}
	{ \If {none of the $k$-sets in $\{\kset_1,\ldots,\kset_r\}$ has been previously exposed ($\{\kset_1,\ldots,\kset_r\}\cap \Phi=\emptyset$) \label{ln:domexp}}
		{\If{for every $e\in \Phi$ and $\kset \in \{\kset_1,\ldots,\kset_r\}$ we have $e\cap K \cap \linfec(t)^C= \emptyset$ \label{ln:domexpint}}	
			{ \If{$\{\kset_1,\ldots,\kset_r\}\subseteq E$ \label{ln:domedge}}
				{The hyperedges in $\{\kset_1\ldots,\kset_r\}$ are exposed ($\Phi \leftarrow \Phi\cup \{\kset_1,\ldots,\kset_r\}$ \label{ln:expdom})\\
					\uIf {$r=2$}
					{Infect every vertex in $\kset_1$ and $\kset_2$ ($B\leftarrow B \cup \kset_1 \cup \kset_2$)\label{ln:infdom2}}
					\Else {	Infect the vertex in the intersection of the $k$-sets ($B\leftarrow B \cup \bigcap_{i=1}^r \kset_i$)}\label{ln:infdom}}
		}}
	}
	$\linfec(t+1)\leftarrow B$\\
	$\infedge(t+1)\leftarrow \Phi$\\
	\KwOut{$\linfec(t+1),\infedge(t+1)$}
	\caption{Query-process}
	\label{alg:query-process}
\end{algorithm}
 
As previously mentioned not every collection within the individual families is queried. In particular if any of the conditions in lines \ref{ln:crossexp}, \ref{ln:multiexp}, \ref{ln:overlapexp}, \ref{ln:domexp} or \ref{ln:domexpint} fail, for an element of the given family, then the collection is discarded.

\subsection{Auxiliary results}\label{sec:aux-results}
In this section we discuss auxiliary results required to analyse the query-process.

First we establish upper bounds on the sizes of the family $\dom(t+1)$ of the star collections, the family $\overlap(t+1)$ of the widely-overlapping collections, the family $\multi(t+1)$of the heavily-infected collections, and the family $\cross(t+1)$ of the neutron-star adjacent collections. Set 
$$N(t+1):=|\cross(t+1)|, \quad H(t+1):=|\multi(t+1)|, \quad W(t+1):=|\overlap(t+1)|, \quad S(t+1):=|\dom(t+1)|.$$ 
A quick calculation establishes an upper bound on $S(t+1)$.
\begin{obs}\label{obs:starsize}
	We have
	$$S(t+1)
	\le \frac{|\linfec(t)|^r-|\linfec(t-1)|^r}{r!}n \binom{n}{r-2}^r.$$
\end{obs}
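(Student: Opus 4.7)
The plan is to upper-bound $|\dom(t+1)|$ by first specifying the unique common intersection vertex $v$ and then counting ordered $r$-tuples of $k$-sets through $v$ subject to the remaining conditions of Definition~\ref{def:stars}. The first two conditions of that definition force such a $v$ to exist and to be unique, so summing over $v$ contributes a factor of at most $n$ (dropping the restriction $v\in\linfec(t)^C$ only inflates the count). For the count at a fixed $v$ I will drop both the pairwise-intersection-in-exactly-one-vertex requirement and the distinctness of the $\kset_i$, since allowing the $r$ $k$-sets to repeat only inflates the ordered count, and the true distinct-unordered count is at most the repeating-ordered count divided by $r!$.

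The key step is to handle the newness condition (that at least one $\kset_i$ contains a vertex of $\linfec(t)\setminus\linfec(t-1)$) via a subtraction. The number of $k$-sets through $v$ containing exactly one vertex of $\linfec(t)$ is
\[
e_v := |\linfec(t)|\binom{n-1-|\linfec(t)|}{k-2},
\]
obtained by picking the lone infected vertex ($|\linfec(t)|$ choices) and then filling the remaining $k-2$ slots from $V\setminus(\linfec(t)\cup\{v\})$. The analogous count where the lone infected vertex is additionally required to lie in $\linfec(t-1)$ is
\[
e_v' := |\linfec(t-1)|\binom{n-1-|\linfec(t)|}{k-2},
\]
and critically the two quantities share the same combinatorial factor, because the filler-vertex pool $V\setminus(\linfec(t)\cup\{v\})$ depends only on $\linfec(t)$, not on $\linfec(t-1)$.

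Raising to the $r$-th power (one independent choice per slot, with repeats allowed) and subtracting, the number of ordered $r$-tuples satisfying the third and fourth conditions of Definition~\ref{def:stars} is at most
\[
e_v^r-(e_v')^r=\bigl(|\linfec(t)|^r-|\linfec(t-1)|^r\bigr)\binom{n-1-|\linfec(t)|}{k-2}^r\le\bigl(|\linfec(t)|^r-|\linfec(t-1)|^r\bigr)\binom{n}{k-2}^r.
\]
Dividing by $r!$ to pass to unordered collections and summing over the $n$ choices of $v$ yields the claimed bound. The argument is essentially a bookkeeping exercise; the only mildly delicate point is that the subtraction depends on the two $k$-set counts sharing the same combinatorial prefactor, which holds because the filler-vertex pool is determined solely by $\linfec(t)$.
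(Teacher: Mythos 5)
Your argument is correct and is precisely the ``quick calculation'' that the paper leaves implicit: fix the common vertex $v$, count ordered $r$-tuples of $k$-sets through $v$ each containing exactly one infected vertex, subtract the tuples in which every infected vertex is old to enforce the newness condition (the shared prefactor $\binom{n-1-|\linfec(t)|}{k-2}$ makes the subtraction clean), and divide by $r!$ before summing over the at most $n$ choices of $v$. Note only that the binomial coefficient $\binom{n}{r-2}^r$ in the stated bound is a typo for $\binom{n}{k-2}^r$ (this is the form actually used later, e.g.\ in the proof of Lemma~\ref{lem:upperstep}), and that is the version your proof establishes.
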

Similarly we obtain an upper bound on $W(t+1)$.
\begin{obs}\label{obs:overlapsize}
	We have
	$$W(t+1)
	\le |\linfec(t)|(|\linfec(t)|-|\linfec(t-1)|)n^2 \binom{n}{k-3}^2.$$
\end{obs}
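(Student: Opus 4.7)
The plan is to prove Observation~\ref{obs:overlapsize} by a straightforward (and mildly wasteful) counting argument: I will encode each pair $\{K_1,K_2\}\in \overlap(t+1)$ by a tuple drawn from an explicit product set, and show that the map from collections to tuples is well-defined (though not necessarily injective), yielding the desired upper bound.

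First, for each pair in $\overlap(t+1)$, I would label the $k$-sets so that $K_2$ contains a vertex of $\linfec(t)\setminus \linfec(t-1)$; this is possible by the third condition of Definition~\ref{def:overlapping}, with ties broken arbitrarily. I then pick auxiliary data: a vertex $u_1\in K_1\cap \linfec(t)$, which exists by the second condition; a vertex $u_2\in K_2\cap(\linfec(t)\setminus\linfec(t-1))$, which exists by our labeling and the third condition; and an (ordered) pair of distinct vertices $\{w_1,w_2\}\subseteq K_1\cap K_2$, which exists because by the first condition $K_1\cap K_2$ has at least two (uninfected) vertices. Finally, the remaining vertices of $K_1\setminus(\{u_1,w_1,w_2\})$ and $K_2\setminus(\{u_2,w_1,w_2\})$ each form a set of size at most $k-3$, and knowing these together with the encoded data uniquely determines the pair $\{K_1,K_2\}$.

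Counting the image: $u_1$ has at most $|\linfec(t)|$ choices, $u_2$ has at most $|\linfec(t)|-|\linfec(t-1)|$ choices, the pair $\{w_1,w_2\}$ has at most $\binom{n}{2}\le n^2$ choices, and each of the two ``completions'' of $K_1$ and $K_2$ has at most $\binom{n}{k-3}$ choices. Multiplying these counts yields
\[
W(t+1)\le |\linfec(t)|\,(|\linfec(t)|-|\linfec(t-1)|)\,n^2\binom{n}{k-3}^2,
\]
as desired. The argument is purely combinatorial and involves no probabilistic tools; the only mild subtlety is checking that the labeling step actually partitions $\overlap(t+1)$ (rather than double-counting), but since we are after an upper bound and the labeling is well-defined for every collection, any extra multiplicity is harmless. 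I do not expect any significant obstacles — the main thing to verify is that the vertices $u_1,u_2,w_1,w_2$ need not all be distinct, but this only causes the encoding to overcount, which again is safe for an upper bound.
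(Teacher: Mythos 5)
Your counting argument is correct and is essentially the calculation the paper has in mind (the observation is stated there as a quick count with no written proof): choose an infected vertex in one $k$-set, a newly infected vertex in the other, two shared vertices (at most $n^2$ ways), and complete each $k$-set in at most $\binom{n}{k-3}$ ways, with the tuple reconstructing the pair. One small clarification: since you take $w_1,w_2$ uninfected while $u_1,u_2$ are infected, the only possible coincidence is $u_1=u_2$, which is harmless, so the completions really do have exactly $k-3$ vertices and the encoding never fails (if $u_i\in\{w_1,w_2\}$ were possible the completion would be too large, not merely overcounted).
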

Next we deduce an upper bound on $H(t+1)$.
\begin{obs}\label{obs:multisize}
	We have
	$$H(t+1)
	\le |\linfec(t)|(|\linfec(t)|-|\linfec(t-1)|)\binom{n}{k-2}.$$
\end{obs}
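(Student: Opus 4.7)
The plan is to bound $H(t+1)=|\multi(t+1)|$ by a direct sequential counting argument that exploits the two defining conditions of a heavily-infected collection. Any $K\in \multi(t+1)$ must contain at least one vertex $v_1\in \linfec(t)\backslash\linfec(t-1)$ (by the second bullet of Definition~\ref{def:overinfected}) together with a second infected vertex $v_2\in \linfec(t)\backslash\{v_1\}$ (forced by the first bullet, $|K\cap\linfec(t)|\ge 2$).

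First I would choose the distinguished \emph{newly infected} vertex $v_1$, giving at most $|\linfec(t)|-|\linfec(t-1)|$ possibilities. Next I would pick a second infected vertex $v_2\in\linfec(t)\backslash\{v_1\}$, contributing at most $|\linfec(t)|$ further choices. Finally the remaining $k-2$ vertices of $K$ can be selected from the $n-2$ vertices of $[n]\backslash\{v_1,v_2\}$, giving at most $\binom{n-2}{k-2}\le\binom{n}{k-2}$ options. Multiplying these three bounds yields the claimed estimate.

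Since a single $K$ may admit several valid choices of the ordered pair $(v_1,v_2)$ (for instance, if $K$ contains two or more vertices of $\linfec(t)\backslash\linfec(t-1)$), this procedure will typically count $K$ more than once, but overcounting only strengthens the upper bound and so can be ignored. There is no real obstacle here; the argument is purely combinatorial and mirrors the strategy used for Observations~\ref{obs:starsize} and~\ref{obs:overlapsize}, each of which designates one vertex to come from $\linfec(t)\backslash\linfec(t-1)$ and then freely chooses further vertices to complete the configuration. The only minor care needed is to record the clean bound $\binom{n}{k-2}$ rather than the slightly tighter $\binom{n-2}{k-2}$, so that the estimate matches the form convenient for the analysis of the query-process in Section~\ref{sec:subcritical}.
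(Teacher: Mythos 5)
Your counting argument is correct and is exactly the ``quick calculation'' the paper has in mind for this observation (which it states without proof): designate a vertex of $\linfec(t)\setminus\linfec(t-1)$, a second infected vertex, and the remaining $k-2$ vertices of the $k$-set, with any overcounting only helping the upper bound. Nothing further is needed.
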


In order to give an upper bound on $N(t+1)$, we include here the following estimate on the number of exposed hyperedges, which could be associated with a neutron-star-adjacent collection.
\begin{claim}\label{clm:infedgetoinfver}
For every $t\ge 0$ the number of exposed hyperedges in $\infedge(t)\setminus \infedge(t-1)$ which still contain an uninfected vertex is at most $r |\linfec(t)\setminus \linfec(t-1)|$.
\end{claim}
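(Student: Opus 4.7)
The plan is to decompose the newly exposed hyperedges in $\infedge(t)\setminus \infedge(t-1)$ according to which family of collections produced them, and to show that only star collections with $r\geq 3$ can leave uninfected vertices behind. First I would inspect the ``infect'' lines of Algorithm~\ref{alg:query-process} case by case: a successful neutron-star-adjacent query (line~\ref{ln:infcross2}), heavily-infected query (line~\ref{ln:infmulti}), widely-overlapping query (line~\ref{ln:infoverlap}), and star query with $r=2$ (line~\ref{ln:infdom2}) all explicitly add every vertex of each newly exposed hyperedge to $B$. Hence none of these exposures can contribute a hyperedge in $\infedge(t)\setminus \infedge(t-1)$ that still contains an uninfected vertex. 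Only successful star queries with $r\geq 3$ (line~\ref{ln:infdom}) can do so, because there only the single intersection vertex is infected while $r$ hyperedges are added to $\Phi$.

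For $r\geq 3$ I would then argue that each successful star query at step $t$ contributes exactly $r$ hyperedges to $\infedge(t)\setminus \infedge(t-1)$ and its unique intersection vertex $v$ lies in $\linfec(t-1)^C$ by Definition~\ref{def:stars}, so $v$ is a new element of $\linfec(t)\setminus \linfec(t-1)$. The key step, and the main obstacle, is to show that distinct successful star queries at step $t$ have distinct intersection vertices; without this one could not convert a count of hyperedges into a count of newly infected vertices. Suppose for contradiction that two star collections $\coll=\{K_1,\ldots,K_r\}$ and $\coll'=\{K'_1,\ldots,K'_r\}$ in $\dom(t)$ share intersection vertex $v$, and that $\coll$ is processed first in the loop. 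After its successful query $K_1,\ldots,K_r\in \Phi$, and in particular $v\in K_1\cap K'_1$; since $\linfec(t-1)^C$ is fixed throughout step $t$ (independently of the running variable $B$), we have $v\in \linfec(t-1)^C$, so the check in line~\ref{ln:domexpint} fails for $\coll'$ and $\coll'$ is discarded, a contradiction.

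Combining the two parts yields the bound: the number of successful star queries at step $t$ is at most $|\linfec(t)\setminus \linfec(t-1)|$, and each contributes $r$ hyperedges, giving $r|\linfec(t)\setminus \linfec(t-1)|$ as required. The case $t=0$ is immediate since $\infedge(0)\setminus \infedge(-1)=\infedge(0)=\emptyset$, and the $r=2$ case is trivial. The only subtlety is the uniqueness argument above, where one must be careful to distinguish between the fixed set $\linfec(t-1)^C$ that determines the families $\dom(t)$ and governs condition~\ref{ln:domexpint}, and the running infection set $B$ maintained by the algorithm during step $t$.
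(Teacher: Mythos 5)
Your proof is correct and follows essentially the same route as the paper: inspect the infection lines to see that only star collections with $r\ge 3$ can expose hyperedges that retain uninfected vertices, and then charge the $r$ newly exposed hyperedges of each successful star query to its intersection vertex in $\linfec(t)\setminus\linfec(t-1)$. Your explicit verification, via lines~\ref{ln:domexp} and \ref{ln:domexpint}, that distinct successful star queries in a step have distinct intersection vertices is exactly the detail the paper compresses into its citation of Definition~\ref{def:stars} and line~\ref{ln:expdom}.
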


\begin{proof}
	By lines \ref{ln:infcross1}, \ref{ln:infcross2}, \ref{ln:infmulti}, \ref{ln:infoverlap}, \ref{ln:infdom2} and \ref{ln:infdom} in Algorithm~\ref{alg:query-process} any exposed hyperedge which still contains uninfected vertices must come from a star when $r\ge 3$. In addition exposing a star, when $r\ge 3$ results in exactly one vertex becoming infected. By Definition~\ref{def:stars} and line~\ref{ln:expdom} of Algorithm~\ref{alg:query-process} each infected vertex is responsible for at most $r$ such exposed hyperedges and the result follows.
\end{proof}
Finally we obtain an upper bound on $N(t+1)$.
\begin{obs}\label{obs:crosssize}
We have
$$N(t+1)
\le 2 k r(|\linfec(t)|-|\linfec(t-1)|)|\linfec(t)|\binom{n}{k-2}.$$
\end{obs}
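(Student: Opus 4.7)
The plan is to bound $|\cross(t+1)|$ by splitting $\cross(t+1) = \cross_1(t+1) \cup \cross_2(t+1)$ according to Definition~\ref{def:intersecting} and enumerating each family by a distinguished tuple. For each $\kset$ in either family I will choose: $(i)$ a distinguished infected vertex $v\in \kset$, $(ii)$ a distinguished uninfected vertex $u \in \kset$ together with an exposed hyperedge $e$ incident to $u$, and $(iii)$ the remaining $k-2$ vertices of $\kset$. Since every $\kset$ in these families admits at least one such tuple, the tuple count is an upper bound on the family size, and step $(iii)$ always contributes a factor of at most $\binom{n-2}{k-2} \le \binom{n}{k-2}$.

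For $\cross_1(t+1)$, the distinguished vertex $v$ must lie in $\linfec(t)\setminus\linfec(t-1)$, giving at most $|\linfec(t)\setminus\linfec(t-1)|$ choices. To count pairs $(u,e)$ with $e \in \infedge(t)$ and $u \in e \cap \linfec(t)^C$, I plan to sum Claim~\ref{clm:infedgetoinfver} over all $s \in \{0,1,\ldots,t\}$: since the infection sets are nested, any $e \in \infedge(s)\setminus\infedge(s-1)$ which still contains an uninfected vertex at time $t$ also contained one immediately after its exposure at time $s$, so the total number of such $e$ is at most $\sum_{s=0}^{t} r|\linfec(s)\setminus\linfec(s-1)| = r|\linfec(t)|$. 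Each such $e$ contains at most $k$ uninfected vertices, yielding at most $kr|\linfec(t)|$ admissible pairs $(u,e)$. Combining the three factors gives $|\cross_1(t+1)| \le |\linfec(t)\setminus\linfec(t-1)|\cdot kr|\linfec(t)| \cdot \binom{n}{k-2}$.

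For $\cross_2(t+1)$ the distinguished vertex $v$ may be any element of $\linfec(t)$, giving at most $|\linfec(t)|$ choices, while Claim~\ref{clm:infedgetoinfver} applies directly at step $t$ to yield at most $r|\linfec(t)\setminus\linfec(t-1)|$ hyperedges $e \in \infedge(t)\setminus\infedge(t-1)$ still containing an uninfected vertex, and hence at most $kr|\linfec(t)\setminus\linfec(t-1)|$ pairs $(u,e)$. This produces $|\cross_2(t+1)| \le |\linfec(t)|\cdot kr|\linfec(t)\setminus\linfec(t-1)| \cdot\binom{n}{k-2}$, and summing the two bounds delivers the claimed inequality. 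The main subtlety is the monotonicity step used to promote Claim~\ref{clm:infedgetoinfver} (which concerns ``uninfected at the moment of exposure'') to a bound involving vertices uninfected at the later time $t$; the rest is direct enumeration and combining the factors.
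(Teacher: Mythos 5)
Your proof is correct and follows essentially the same route as the paper: split $\cross(t+1)$ into $\cross_1(t+1)$ and $\cross_2(t+1)$, bound each by (choices of a distinguished infected vertex) $\times$ (choices of an uninfected vertex inside a relevant exposed hyperedge, at most $k$ per hyperedge) $\times \binom{n}{k-2}$, and then control the number of relevant exposed hyperedges via Claim~\ref{clm:infedgetoinfver} — applied directly at step $t$ for $\cross_2$ and summed (telescoped) over all earlier steps for $\cross_1$. The paper compresses this into a one-line bound with the two terms $k|\linfec(t)|(|\infedge(t)|-|\infedge(t-1)|)\binom{n}{k-2}$ and $k|\infedge(t)|(|\linfec(t)|-|\linfec(t-1)|)\binom{n}{k-2}$ before invoking the claim; your version merely makes explicit the monotonicity/summation step that the paper leaves implicit.
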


\begin{proof}
Clearly 
$$N(t+1)\le k |\linfec(t)|(|\infedge(t)|-|\infedge(t-1)|)\binom{n}{k-2} + k|\infedge(t)| (|\linfec(t)|-|\linfec(t-1)|)\binom{n}{k-2}.$$
The result then follows from Claim~\ref{clm:infedgetoinfver}.
\end{proof}

\subsection{Coupling $r$-boostrap percolation and the query-process}\label{sec:coupling}
In this section we show that every vertex infected by $r$-boostrap percolation is also infected by the query-process, assuming they run on the same $k$-uniform hypergraph with the same initial infection set. Note that this is not necessarily true for every step of the process, as the query-process may contain uninfected vertices, which would have already been infected by $r$-bootstrap percolation. 

Roughly speaking the query-process focuses on adding the vertices infected through star collections one at a time. Recall that a star collection consists of $r$ hyperedges each containing exactly one infected vertex such that the hyperedges overlap in exactly one vertex, which has to be uninfected. More precisely these are considered as long as none of the hyperedges in the star collection intersect or match any previously exposed hyperedges. 
Essentially this means that we are excluding infections which result from hyperedges containing multiple infected vertices, hyperedges overlapping in multiple vertices or hyperedges which intersect an already exposed hyperedge. The families of widely-overlapping, heavily-infected,  and neutron-star-adjacent collections are created in order to include the vertices which become infected in such a manner as well. 
In the following lemma we provide a rigorous version of this argument. Recall that $\infec_f$ denotes the set of vertices which become infected during $r$-boostrap percolation. Similarly, let $\linfec_f$ denote the set of vertices which become infected during the query-process.

\begin{lemma}\label{lem:bootstrapsubset}
	Given any (deterministic or random) hypergraph and a set of initially infected vertices,   every vertex infected by $r$-bootstrap percolation will also be infected by the query-process, i.e.,
	$$\infec_f \subseteq \linfec_f.$$
	
\end{lemma}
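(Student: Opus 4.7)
The plan is to prove the inclusion $\infec_f \subseteq \linfec_f$ by induction on the step $t \ge 0$ of the $r$-bootstrap percolation process, showing that $\infec(t) \subseteq \linfec_f$. The base case $t = 0$ is immediate since both processes are started from the same initial set, i.e.\ $\infec(0) = \linfec(0) \subseteq \linfec_f$.

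For the inductive step, I would fix $v \in \infec(t) \setminus \infec(t-1)$, so that $v$ admits $r$ distinct neighbours $u_1,\ldots,u_r \in \infec(t-1)$ witnessed by hyperedges $e_i \ni u_i, v$ in the underlying hypergraph. By the inductive hypothesis, $u_1,\ldots,u_r \in \linfec_f$. Suppose for contradiction that $v \notin \linfec_f$ and let $s$ be the query-process step at which the last of the $u_i$'s is infected. The bulk of the argument is then a case analysis on the configuration of $e_1,\ldots,e_r$ at step $s$:
\begin{itemize}
\item If some $e_i$ contains at least two vertices of $\linfec_f$ and has not been previously exposed, then $e_i \in \multi(s'+1)$ at the step $s'$ just after its second infected vertex appears by Definition~\ref{def:overinfected}; a successful query at line~\ref{ln:infmulti} of Algorithm~\ref{alg:query-process} infects every vertex of $e_i$, including $v$.
\item If two of the $e_i$'s overlap in at least two uninfected vertices and neither is exposed, they form a widely-overlapping collection (Definition~\ref{def:overlapping}), and a query infects $v$ via line~\ref{ln:infoverlap}.
\item If the $e_i$'s pairwise intersect only at $v$, each carries exactly one infected vertex, and none of them is exposed, they form a star collection (Definition~\ref{def:stars}) whose query at step $s+1$ infects $v$ by line~\ref{ln:infdom2} or \ref{ln:infdom}.
\end{itemize}
In each of these cases we derive $v \in \linfec_f$, contradicting our assumption.

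The remaining possibility is that at least one $e_i$ has been exposed at some earlier step while $v$ remains uninfected. By inspection of the infection instructions (lines~\ref{ln:infcross1}--\ref{ln:infdom}), the \emph{only} way a hyperedge can be exposed without infecting all of its vertices is by being part of a star with $r \ge 3$, in which case only the centre of the star becomes infected. In this final case I would argue that, immediately after such an exposure of $e_i$, there exists a $k$-set $K$ containing a vertex of $\linfec_f$ and intersecting $e_i$ at the uninfected vertex $v$ (e.g.\ another $e_j$, provided it has not itself been exposed). Such a $K$ lies in $\cross(s+1)$ by Definition~\ref{def:intersecting} (via $\cross_2$, using the freshly exposed $e_i$), and the query at line~\ref{ln:infcross1} infects every vertex of $e_i$, including $v$, yielding the final contradiction.

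The \textbf{main obstacle} is this last bookkeeping step: ensuring that when several hyperedges through $v$ have been exposed as stars with $r \ge 3$, one can always exhibit \emph{some} admissible neutron-star-adjacent $K$ whose query is not blocked by the discarding rules in lines~\ref{ln:crossexp}, \ref{ln:multiexp}, \ref{ln:overlapexp}, \ref{ln:domexp} or \ref{ln:domexpint}. I would handle this by taking the \emph{earliest} such star exposure incident to $v$ (so that no other $e_j$ through $v$ has been exposed yet) and invoking Claim~\ref{clm:infedgetoinfver} together with the structural definitions of the four collection families; alternatively, a double induction on the $r$-bootstrap step and on the number of previously exposed hyperedges at $v$, or a potential function counting uninfected vertices in exposed hyperedges, should close the argument cleanly.
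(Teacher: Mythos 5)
Your overall route is the one the paper takes: assume a first vertex $v$ of the bootstrap process that the query-process misses, look at the witnessing hyperedges $e_1,\ldots,e_r$, and run a case analysis over the four families (heavily-infected, widely-overlapping, star, neutron-star-adjacent), with the hard case being hyperedges that were exposed as part of a star when $r\ge 3$ and hence left $v$ uninfected. However, as written the argument has two genuine gaps, and they sit exactly where the paper's proof does its real work. First, your case analysis is not exhaustive with respect to the discarding rules: in your third bullet you assert that if none of $e_1,\ldots,e_r$ has been exposed, then the star query at step $s+1$ infects $v$. This is false, because the star can be discarded by the rule in line~\ref{ln:domexpint} when some $e_i$ merely \emph{intersects} a previously exposed hyperedge (not among the $e_j$'s) in an uninfected vertex other than $v$; moreover an $e_i$ can become exposed earlier \emph{within the same step} by a different star of $\dom(s+1)$ centred at another uninfected vertex of $e_i$, which again blocks your query without infecting $v$. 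Your ``remaining possibility'' only covers ``some $e_i$ was exposed at an earlier step'', so these blocking mechanisms are never addressed. The paper handles them by showing (Claim~\ref{clm:nocrossmulti}) that such an $e_i$ would then lie in $\cross(t)$ for some $t$, and that this is impossible.

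Second, the hard case itself is only gestured at. Showing that a hyperedge exposed via a star (so that only its centre got infected) cannot coexist with $v$ accumulating $r$ infected neighbours requires chasing the order of infection and exposure times: the paper's proof of Claim~\ref{clm:nocrossmulti} tracks steps $\tau<\tau''\le\tau'<\tau_1$ to show that any hyperedge $\hat e$ with a vertex of $\linfec_f$ meeting $e_1$ in an uninfected vertex would have been queried as a neutron-star-adjacent collection before it could be exposed, forcing $v$ to be infected; the analogous Claim~\ref{clm:nooverlap} and the final counting of infected neighbours of $v$ complete the contradiction. Your proposed fixes do not supply this: Claim~\ref{clm:infedgetoinfver} is a counting bound feeding Observation~\ref{obs:crosssize} and plays no role in the coupling, and ``take the earliest star exposure incident to $v$'' does not by itself work, since the witness $K=e_j$ you want to query may acquire its infected vertex only after further star exposures at $v$ have occurred and may itself have been exposed in the meantime, so the time-ordering argument cannot be avoided. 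The ``double induction'' and ``potential function'' alternatives are left entirely unexecuted. As it stands the proposal is a correct plan with the decisive step missing, not a proof.
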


\begin{proof}
	Assume for contradiction that $\infec_f\backslash \linfec_f\neq\emptyset$. Select $u\in \infec_f\backslash \linfec_f$ such that $u$ becomes infected in step $s+1$, that is, $u\in\infec(s+1)\backslash \infec(s)$ and 
	$\infec(s)\subseteq \linfec_f$. Note that since $\infec(0)=\linfec(0)\subseteq \linfec_f$ such a vertex must exist. This means that even though $u$ has at least $r$ infected neighbours in $\linfec_f$, it remained uninfected. Let $e_1,\ldots,e_{\ell}$ be the set of hyperedges which contain $u$ and at least one vertex in $\infec(s)$. 
	
	\begin{claim}\label{clm:nocrossmulti}
	For any $1\le i \le \ell$ and $t\ge 1$ we have $e_i\not \in \cross(t)\cup \multi(t)$.
	\end{claim}

	\begin{proof}
	Without loss of generality assume for contradiction that $e_1\in \cross(t) \cup \multi(t)$ for some $t\ge 1$.
	 
	If $e_1$ has been successfully queried as a neutron-star-adjacent collection or as a heavily-infected collection, then according to the lines~\ref{ln:crossexp} and \ref{ln:crossedge} of Algorithm~\ref{alg:query-process} when $e_1\in \cross(t)$ and lines~\ref{ln:multiexp} and \ref{ln:multiedge} when $e_1\in\multi(t)$ the following conditions must hold: $e_1$ has not been exposed by the time of the query and $e_1$ must be a hyperedge in the hypergraph. In both of these cases, by lines~\ref{ln:infcross2} and \ref{ln:infmulti} respectively, every vertex in $e_1$ would become infected, a contradiction as $u\in e_1$ remains uninfected until the end of the process. 
	
	By assumption, $e_1\in E$, and thus we must have that the hyperedge $e_1$ was exposed before the query could occur. More precisely, it had to be exposed during an earlier step, that is, $e_1\in \infedge(t-1)$, as we start step $t$ by querying the neutron-star-adjacent collections, which is followed by the heavily-infected collections and in both cases exposing the hyperedge $e_1$ would result in every vertex within $e_1$, including $u$, becoming infected, a contradiction. 	
	
	When $r=2$ then lines~\ref{ln:infcross2},\ref{ln:infmulti},\ref{ln:infoverlap},\ref{ln:infdom2} imply that every vertex within each exposed hyperedge is infected, which implies that $u$ is also infected. This leads to a contradiction and completes the proof of the $r=2$ case.
	
	On the other hand, if $r\ge 3$, then the rule for infecting vertices within star collections changes from infecting every vertex in the exposed hyperedges (line~\ref{ln:infdom2}) to infecting only one vertex (line~\ref{ln:infdom})
	and thus $e_1$ must be in a successfully queried collection of $r$ many $k$-sets $\coll\in \dom(\tau)$, for some $\tau < t$.

	Since $e_1$ is a $k$-set in $\coll$ by Definition~\ref{def:stars} it contains exactly one infected vertex at the beginning of step $\tau$. Now $\coll\in \dom(\tau) $ is successfully queried only if the following conditions hold, no $k$-set $\kset\in \coll$ has been exposed (line~\ref{ln:domexp}) or intersects an exposed hyperedge in an uninfected vertex (line~\ref{ln:domexpint}). 
	As only vertices within exposed hyperedges can become infected, no additional vertices within the $k$-sets in $\coll$ are infected, until $\coll$ is queried. By line~\ref{ln:infdom}, only one vertex is infected, when $\coll$ is successfully queried, namely the vertex which is contained in every $k$-set within $\coll$. At this point $e_1$ will contain two infected vertices, neither of which is $u$. In fact, this will hold until the end of step $\tau$, as after this round only stars which share no uninfected vertices with $e_1$ are queried (lines~\ref{ln:domexp},\ref{ln:domexpint}).

	In the remainder of the proof we will show that $e_1$ is the only hyperedge to contain a vertex in $\linfec_f$ and an uninfected vertex in $e_1$. This in turn implies, that the vertex $u$ has only two infected neighbours, leading to a contradiction as by assumption it has $r\ge 3$ neighbours in $\linfec_f$.
	
	Assume for contradiction that there exists a hyperedge $\hat e$ containing at least one vertex in $\linfec_f$ and at least one uninfected vertex of $e_1$. Note that $\hat e$ is a neutron-star-adjacent collection and would be included either in $\cross_2(\tau+1)$ or in $\cross_1(\tau_1)$ for some $\tau_1>\tau$. Denote by $\tau_1$ the first integer larger than $\tau$ such that  $\hat e\in \cross(\tau_1)$. Similarly as before, if $\hat e$ were queried (line~\ref{ln:crossexp}) as a neutron-star-adjacent collection, then this query would be successful as $\hat e\subseteq E$ (line~\ref{ln:crossedge}) and every vertex of $e_1$, including $u$, would be infected (line \ref{ln:infcross1}). Since the family of neutron-star-adjacent collections are the first to be considered in a given step, there must exist a step $\tau'<\tau_1$ such that $\hat e$ was exposed in that step. 
	
	 Recall that by the end of step $\tau$ no previously exposed hyperedge intersects $e_1$ in an uninfected vertex, and thus $\hat e$ must have been exposed between steps $\tau$ and $\tau_1$, that is, $\tau<\tau'<\tau_1$. Note that if $\hat e$ was exposed in step $\tau'$ it must contain a vertex which became infected in an earlier step, $\tau''<\tau'$. 
	 Now if $\tau''\le \tau$ then $\hat e\in \cross(\tau+1)$ leading to a contradiction as $\tau+1<\tau_1$. On the other hand if $\tau''>\tau$ then $\hat e\in\cross(\tau''+1)$ which is also a contradiction as $\tau''+1\le\tau'<\tau_1$.	
 \end{proof}

	\begin{claim}\label{clm:nooverlap}
		For any $1\le i < j  \le \ell$ and $t\ge 1$ we have $\{e_i,e_j\}\not \in \overlap(t)$.
	\end{claim}
	
	\begin{proof}
	Without loss of generality assume for contradiction that $\{e_1,e_2\}\in \overlap(t)$ for some $t\ge 1$.
	Similarly to the family of neutron-star-adjacent collections and heavily-infected collections, if neither of $\{e_1,e_2\}$ was exposed before it was queried (line~\ref{ln:overlapexp}) and $\{e_1,e_2\}\subseteq E$, then by line~\ref{ln:infoverlap} every vertex in $e_1,e_2$ will be infected. Clearly $e_1,e_2\in E$ and thus at least one of $e_1$ or $e_2$ must have been exposed earlier. 
	In fact this can only occur if $r\ge 3$ and there exists $\coll\in \dom(\tau)$ for some $\tau\leq t$ such that $e_1\in \coll$ or $e_2\in \coll$, as for every other family, every vertex within every exposed hyperedge becomes infected (lines~\ref{ln:infcross2},\ref{ln:infmulti},\ref{ln:infoverlap},\ref{ln:infdom2},\ref{ln:infdom}). Without loss of generality assume that $e_1\in \coll$.
	
	By Definitions~\ref{def:stars} and \ref{def:overlapping} we have $e_2 \not \in \coll$. Recall that $u$, a vertex which remains uninfected throughout the process is found in both of the hyperedges $e_1,e_2$. Since $\coll$ was successfully queried, by line~\ref{ln:domexpint}, the hyperedge $e_2$ has not been exposed before $\coll$ was queried, or during the remainder of step $\tau$. By Definition~\ref{def:overlapping} the hyperedge $e_2$ must contain a vertex other than $u$ which became infected in some step $\tau'<t$. If $\tau'\le \tau$, then $e_2\in \cross(\tau+1)$ otherwise $e_2\in \cross(\tau'+1)$, by the definitions of $\cross_1(\tau+1)$ and $\cross_2(\tau'+1)$ in Definition~\ref{def:overlapping}. Now the only way $e_2$ is queried in a manner that leaves $u$, a vertex of $e_2$, uninfected is, if $e_2$ has been exposed as part of a previously successfully queried star collection. Recall that $e_2$ could not have been exposed before the end of step $\tau$ and by line~\ref{ln:domexpint} it can not be part of an exposed star collection during the later steps of the process. Therefore $e_2$ will be successfully queried, in such a way that all of its vertices become infected, leading to a contradiction. 
	\end{proof}

	By Claim~\ref{clm:nocrossmulti} none of the hyperedges $e_1,\ldots,e_{\ell}$ can be contained in $\multi(t)$ for any $t \ge 1$, implying that each of these hyperedges contains exactly one infected vertex in $\linfec_f$. In addition by Claim~\ref{clm:nooverlap} no pair of hyperedges in $\{e_1,\ldots,e_\ell\}$ was contained in $\overlap(t)$ for any $t\ge 1$ and thus any pair of these hyperedges intersect only in $u$ in $\linfec_f^C$.
	
	Therefore, as $u$ has $r$ infected neighbours in $\infec_f$ there must exist a set $\coll\subseteq \{e_1,\ldots,e_\ell\}$, with $|\coll|=r$ such that any pair of hyperedges in $\coll$ intersect only in $u$. Now $\coll$ will be contained in the family $\dom(t)$ for some $t\ge 1$. Let $\tau$ be the first time this holds and without loss of generality assume that $\coll=\{e_1,\ldots,e_r\}$.
	A successful query of $\coll$ would have infected $u$, implying that either one of the $e_i\in\coll$ has been exposed previously (line~\ref{ln:domexp}) or one of the $e_i\in\coll$ intersects an exposed hyperedge in an uninfected vertex (line~\ref{ln:domexpint}). 
	
	Now if $e_i$ is exposed and $u$ is uninfected, then every other hyperedge $e_j$ will become a neutron-star-adjacent collection in some later step. Formally, for every $1\le j \le \ell$ with $j\neq i$ there is some step $t\ge 1$ such that $e_j \in \cross(t)$ for some $t\ge 1$, contradicting Claim~\ref{clm:nocrossmulti}. On the other hand, if $e_i$ intersects a hyperedge in an uninfected vertex, then we have that $e_i\in \cross(t)$ for some $t\ge 1$, again contradicting Claim~\ref{clm:nocrossmulti}. We conclude that no subset of $\{e_1,\ldots,e_{\ell}\}$ contains $r$ hyperedges which intersect only in $u$. 
	
	We have already established that each hyperedge in $\{e_1,\ldots,e_{\ell}\}$ has at most one vertex in $\linfec_f$. Together with the fact that other than $u$ any pair of hyperedges in $\{e_1,\ldots,e_{\ell}\}$ can intersect only in infected vertices, and no $r$ of them are vertex disjoint except for $u$ implies that $u$ has less than $r$ infected neighbours at the end of the process, contradicting our assumption that $u$ has at least $r$ infected neighbours. 
\end{proof}

\section{Subcritical regime}\label{sec:subcritical}

\subsection{Properties of the query-process}

In the following we examine the behaviour of the query-process when the underlying graph is the binomial random $k$-uniform hypergraph $H_k(n,p)$ with vertex set $V=[n]$ and hyperedge set $E$.
Recall that we only query collections, where none of the hyperedges have been exposed before. However the query may contain unexposed hyperedges which have been part of unsuccessfully queried collections. 
Intuitively, if a collection contains $k$-sets which were part of an unsuccessful query, then the probability that the current query is successful decreases.   
In the following lemma, we make this argument rigorous by providing an upper bound for the probability of a successful query, in a more general setting, which is independent of the result of all previous queries.

\begin{lemma}\label{lem:indepbound}
	Consider a process where collections of $k$-sets are subsequently queried, with a restriction that no queried collection may contain any $k$-set from a previously successfully queried collection, as in Algorithm 1. Then, conditional on the results of all previous queries, the probability that a collection $\coll$ of $k$-sets is successfully queried is at most $p^{|\coll|}$.
\end{lemma}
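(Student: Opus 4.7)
The plan is to apply the FKG inequality after conditioning on the previously successful queries. The key structural input from Algorithm~\ref{alg:query-process} is that the current collection $\coll$ shares no $k$-set with any previously successfully queried collection, which isolates the fresh randomness from the positive information in the history.

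First I would set up notation. Let $(X_K)_K$ denote the independent Bernoulli$(p)$ indicators that $k$-sets are hyperedges in $H_k(n,p)$. Partition the previously queried collections according to their outcomes into successful ones $\coll_1^+,\ldots,\coll_m^+$ and unsuccessful ones $\coll_1^-,\ldots,\coll_\ell^-$, and define
$$B^+ := \bigcap_{i=1}^{m}\{\coll_i^+\subseteq E\}, \qquad B^- := \bigcap_{j=1}^{\ell}\{\coll_j^- \not\subseteq E\},$$
so that $B^+$ is an intersection of increasing events, $B^-$ is an intersection of decreasing events, and the full history event is $B := B^+ \cap B^-$. The event whose probability I want to bound is the increasing event $A := \{\coll \subseteq E\}$.

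Next, split the $k$-sets into $F^+ := \bigcup_{i} \coll_i^+$ and its complement $F^0$. The restriction in the algorithm gives $\coll \subseteq F^0$. Conditioning on $B^+$ fixes $X_K = 1$ for every $K \in F^+$ and leaves $(X_K)_{K\in F^0}$ as an independent Bernoulli$(p)$ family, so $\mathbb{P}(\cdot\mid B^+)$ is still a product measure on the $F^0$-variables. Under this conditional measure, $A$ is an increasing event on $F^0$-variables with $\mathbb{P}(A\mid B^+) = p^{|\coll|}$, and each event $\{\coll_j^- \not\subseteq E\}$ rewrites as $\{\exists\, K \in \coll_j^- \cap F^0 : X_K = 0\}$, which is decreasing on $F^0$-variables (and nontrivial, since otherwise the history $B$ would have zero probability). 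Hence $B^-$ is a decreasing event on $F^0$.

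Applying the FKG inequality (Theorem~\ref{thm:FKG}) to the product measure on $F^0$ then yields
$$\mathbb{P}(A \cap B^- \mid B^+) \leq \mathbb{P}(A\mid B^+)\,\mathbb{P}(B^-\mid B^+),$$
and dividing by $\mathbb{P}(B^-\mid B^+) = \mathbb{P}(B)/\mathbb{P}(B^+)$ gives
$$\mathbb{P}(A\mid B) = \frac{\mathbb{P}(A\cap B^-\mid B^+)}{\mathbb{P}(B^-\mid B^+)} \;\leq\; \mathbb{P}(A\mid B^+) \;=\; p^{|\coll|},$$
as desired. The main obstacle is the bookkeeping: one must verify that the algorithm's restriction supplies exactly the disjointness $\coll\cap F^+ = \emptyset$ needed to make $A$ independent of $B^+$ (with the correct marginal $p^{|\coll|}$), and that any overlaps between unsuccessful queries and either $\coll$ or later successful queries are irrelevant because FKG absorbs them purely through monotonicity on $F^0$.
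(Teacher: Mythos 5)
Your proof is correct and follows essentially the same route as the paper's: both isolate the positive information from previously successful queries (your conditioning on $B^+$ plays the role of the paper's event $\mathcal{S}$ and its trimmed collections $E_i'=E_i\setminus S$), use the algorithm's restriction to get $\coll\cap F^+=\emptyset$, and then apply the FKG inequality to separate the increasing event $\{\coll\subseteq E\}$ from the decreasing event encoding the unsuccessful queries. The only difference is presentational — you condition on the successful queries first and apply FKG in the resulting product measure, whereas the paper manipulates the joint probabilities unconditionally — so there is nothing further to add.
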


\begin{proof}
Denote by $E_1,\ldots,E_\ell$ the collections of $k$-sets which were unsuccessfully queried before $\coll$ was examined and by $S_1,\ldots,S_j$ the $k$-sets which were part of a successful query before $\coll$ was examined. Let $S=\bigcup_{i=1}^{j}S_i$ and define $E_i'=E_i\backslash S$ for $i=1,\ldots,\ell$. Since $E_i$ was unsuccessfully queried we have $E_i\setminus S\neq \emptyset$. 
The event $E_i\subseteq E$ and $E_i'\subseteq E$ are denoted by $\mathcal{E}_i$ and $\mathcal{E}_i'$ respectively and define $\overline{\mathcal{E}}=\bigcap_{i=1}^{\ell} \overline{\mathcal{E}_i}$ and $\overline{\mathcal{E}'}=\bigcap_{i=1}^{\ell} \overline{\mathcal{E}'_i}$. Finally let $\mathcal{S}$ be the event that $S\subseteq E$. 

Note that for every $1\le i \le \ell$ we have $ \mathcal{S}\cap \overline{\mathcal{E}_i}=\mathcal{S}\cap \overline{\mathcal{E}'_i}$ and thus $\mathcal{S}\cap\overline{\mathcal{E}}=\mathcal{S}\cap \overline{\mathcal{E}'}$.
In addition, $\coll\subseteq E$ and $\mathcal{S}$ are increasing events, while the event $\overline{\mathcal{E}'}$ is decreasing. Therefore, we have 
\begin{align*}
\mathbb{P}[\coll \subseteq E,\mathcal{S},\overline{\mathcal{E}}]
&=\mathbb{P}[\coll \subseteq E,\mathcal{S},\overline{\mathcal{E}'}]\\
&\leq \mathbb{P}[\coll \subseteq E,\mathcal{S}] \mathbb{P}[\overline{\mathcal{E}'}] && (\text{by FKG (Theorem~\ref{thm:FKG})}).
\end{align*}
Using the fact that $S=\bigcup_{i=1}^{j}S_i$ is disjoint of both the collection $\coll$ and every $E_i'$, we have 
\begin{align*}
\mathbb{P}[\coll \subseteq E,\mathcal{S}] \mathbb{P}[\overline{\mathcal{E}'}]
&= \mathbb{P}[\coll \subseteq E]\mathbb{P}[\mathcal{S}] \mathbb{P}[\overline{\mathcal{E}'}]&& (\text{because } \coll\cap S=\emptyset)\\
&= \mathbb{P}[\coll \subseteq E]\mathbb{P}[\mathcal{S},\overline{\mathcal{E}'}] && (\text{because } S\cap E_i'=\emptyset).
\end{align*}
Again using $\mathcal{S}\cap\overline{\mathcal{E}}=\mathcal{S}\cap \overline{\mathcal{E}'}$ we have
\begin{align*}
\mathbb{P}[\coll \subseteq E]\mathbb{P}[\mathcal{S},\overline{\mathcal{E}'}]&= \mathbb{P}[\coll \subseteq E]\mathbb{P}[\mathcal{S},\overline{\mathcal{E}}]= p^{|\coll|}\mathbb{P}[\mathcal{S},\overline{\mathcal{E}}].
\end{align*}
We conclude that
$$\mathbb{P}[\coll \subset E,\mathcal{S},\overline{\mathcal{E}}]\le p^{|\coll|}\mathbb{P}[\mathcal{S},\overline{\mathcal{E}}],$$
and result follows after dividing both sides by $\mathbb{P}[\mathcal{S},\overline{\mathcal{E}}]$.
\end{proof}

Using the previous lemma we estimate the distribution of the infected vertices.

\begin{corollary}\label{lem:stochdom}
Given $\cross(t+1), \multi(t+1), \overlap(t+1)$ and $\dom(t+1)$,  the number of vertices infected in step $t+1$ is stochastically dominated by

$$k^2\mathrm{Bin}(N(t+1),p)+k\mathrm{Bin}(H(t+1),p)+2k\mathrm{Bin}(W(t+1),p^2)+\eta \mathrm{Bin}(S(t+1),p^r),$$
where $\eta := (2k-3) \mathbbm 1_{r=2} + \mathbbm 1_{r\ge 3}$ is as defined in \eqref{eq:eta}.
\end{corollary}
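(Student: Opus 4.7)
My plan is to decompose the newly infected vertices of step $t+1$ by the query that triggered each infection and bound the two factors separately. For every collection $\coll$ ever examined by Algorithm~\ref{alg:query-process} in step $t+1$, let $X_\coll$ be the indicator that $\coll\subseteq E$ (set to $0$ for collections that are discarded before being queried). Inspecting the infection lines of Algorithm~\ref{alg:query-process} shows that every vertex of $\linfec(t+1)\setminus\linfec(t)$ is produced by at least one successful query, so
\begin{equation*}
|\linfec(t+1)\setminus\linfec(t)|\ \leq\ \sum_{\coll} M_\coll\, X_\coll,
\end{equation*}
where $M_\coll$ is a deterministic upper bound on the number of vertices a single successful query of $\coll$ can infect. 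The proof then reduces to (i) bounding $M_\coll$ by $\eta,\,2k,\,k,\,k^2$ according to the family, and (ii) dominating the successful queries within each family by an independent Binomial.

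\textbf{Sequential coupling for (ii).} I would process all queries in the order Algorithm~\ref{alg:query-process} makes them, across the entire history of the process. For each queried $\coll$, Lemma~\ref{lem:indepbound} gives $\mathbb{P}[X_\coll=1\mid\mathcal{F}_\coll]\leq p^{|\coll|}$, where $\mathcal{F}_\coll$ is the $\sigma$-algebra generated by all earlier query outcomes. A standard sequential coupling (revealing the outcomes one at a time through their conditional distributions) then produces an independent family $(\tilde X_\coll)$ of Bernoulli variables with $\mathbb{P}[\tilde X_\coll=1]=p^{|\coll|}$ such that $X_\coll\leq \tilde X_\coll$ almost surely. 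Regrouping the $\tilde X_\coll$ of step $t+1$ by family yields the four mutually independent Binomials $\mathrm{Bin}(N(t+1),p)$, $\mathrm{Bin}(H(t+1),p)$, $\mathrm{Bin}(W(t+1),p^2)$ and $\mathrm{Bin}(S(t+1),p^r)$ appearing in the statement.

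\textbf{Per-query infection counts (i); main obstacle.} Stars: line~\ref{ln:infdom} infects only the central vertex when $r\geq 3$, while line~\ref{ln:infdom2} infects $|K_1\cup K_2|-2=2k-3$ new vertices when $r=2$ (using $|K_1\cap K_2|=1$ and that each $K_i$ contains exactly one infected non-central vertex); either way $M_\coll\leq \eta$. Widely-overlapping: $|K_1\cup K_2|\leq 2k$ by line~\ref{ln:infoverlap}. Heavily-infected: $|K|=k$ by line~\ref{ln:infmulti}. The delicate case is a successful neutron-star-adjacent query of $K$, which by lines~\ref{ln:infcross1}--\ref{ln:infcross2} infects $K$ together with every vertex of every $e\in\infedge(t)$ meeting $K$ in an uninfected vertex. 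The main obstacle is to bound the number of such $e$. For this, I would prove by induction on the step that whenever a vertex $u$ is uninfected, it lies in at most one hyperedge of $\infedge$: every exposure route other than a star immediately infects all vertices of the freshly exposed hyperedge (lines~\ref{ln:infcross2},\ref{ln:infmulti},\ref{ln:infoverlap},\ref{ln:infdom2}), while any star attempting to expose a new hyperedge through $u$ is blocked by line~\ref{ln:domexpint} as soon as some $e\in\Phi$ already contains $u$. Since $K\in\cross(t+1)$ has at least one infected vertex, it contains at most $k-1$ uninfected ones, giving at most $k-1$ relevant $e$ and hence $M_\coll\leq k+(k-1)k = k^2$. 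Combining (i) with the coupling from (ii) yields the stated stochastic domination.
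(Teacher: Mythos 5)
Your proposal is correct and follows essentially the same route as the paper: it combines Lemma~\ref{lem:indepbound} (made explicit via a standard sequential coupling) with per-family bounds on the number of vertices infected by one successful query, including the key structural fact that vertices uninfected at the start of the step lie in at most one exposed hyperedge, which yields the bound $k^2$ for neutron-star-adjacent collections. Your counts $\eta$, $2k$, $k$, $k^2$ match the paper's, so the argument is sound and essentially identical.
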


\begin{proof}
Lemma~\ref{lem:indepbound} implies that the number of successful queries within the individual families  are stochastically dominated by $\mathrm{Bin}(N(t+1),p), \mathrm{Bin}(H(t+1),p), \mathrm{Bin}(W(t+1),p^2)$ and $\mathrm{Bin}(S(t+1),p^r)$, respectively. A successful query leads to different number of infected vertices depending on which family  the queried collection of $k$-sets was contained in. Since the hyperedges of $\infedge(t)$ overlap only in infected vertices we have that any $k$-set not contained in $\infedge(t)$ can share an uninfected vertex with at most $k-1$ hyperedges in $\infedge(t)$ and thus every neutron-star-adjacent collection (comprised of a single $k$-set by definition), causes at most $k^2$ vertices to become infected, after a successful query. When considering a heavily-infected collection (comprised of a single $k$-set by definition) only the vertices within the $k$-set become infected leading to at most $k$ infected vertices, in case of a successful query. Any widely-over-lapping collection (comprised of two $k$-sets by definition) contains at most $2k$ vertices, all of which are infected, after a successful query. Because $\eta=2k-3$ when $r=2$ and $\eta=1$ when $r\ge 3$, the successful query of a star collection leads to exactly $\eta$ infected vertices for any $r\ge 2$.
\end{proof}

\subsection{Bounding trajectory}

In the following we establish the expected trajectory for the number of infected vertices. 
Recall that $$a:=|\infec(0)|=|\linfec(0)|=(1-\varepsilon)a_c.$$ 
Throughout the remainder of this section we assume that $\delta>0$ satisfies $$1/(1+\delta)>(1-\varepsilon)^{r-1}.$$
For convenience of notation define $\linfec(-1)=\emptyset$. 
Based on the heuristic that \eqref{eq:forwardstep} holds, and under the assumption that in every step the number of infected vertices exceeds its expectation by at most a $(1+\delta)$ factor, we define the following bounding trajectory.

\begin{definition}[Bounding trajectory]\label{def:utrajectory}
We define the sequence $\nlinfecappr(t)$ by setting $b(-1)=0$, $b(0)=|\linfec(0)|$ and for each integer $t \geq 0$
\begin{align*}
 \nlinfecappr(t+1)&:=(1+\delta)\eta \frac{\nlinfecappr(t)^r}{r!}n\left[\binom{n}{k-2}p\right]^r+\nlinfecappr(0).
\end{align*}
Moreover, for every integer $t\ge -1$  we set 
$$\nlinfecrat(t):=\frac{\nlinfecappr(t)}{a^*},$$
where $a^*$ is as defined in \eqref{eq:scaling}. 
We call the function $\nlinfecrat$ the \emph{trajectory associated to the query-process}.
\end{definition} 

In the following we express $\nlinfecrat$ using a recursion.
 
\begin{claim}\label{claim:trajectoryRecursion}
For every integer $t \geq 0$ we  have
\begin{align*}
\nlinfecrat(t+1)= (1+\delta) \frac{\nlinfecrat(t)^{r}}{r}+ \nlinfecrat(0).
\end{align*} 
\end{claim}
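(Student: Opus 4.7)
The plan is a direct computation: substitute the definition of $\nlinfecrat$ in terms of $\nlinfecappr$ into the recursion from Definition~\ref{def:utrajectory} and use the defining relation of $a^*$ from \eqref{eq:scaling} to absorb the factor $\eta n(\binom{n}{k-2}p)^r$.

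First I would divide both sides of the recursive definition
\[
\nlinfecappr(t+1)=(1+\delta)\eta \frac{\nlinfecappr(t)^r}{r!}n\left[\binom{n}{k-2}p\right]^r+\nlinfecappr(0)
\]
by $a^*$. On the left-hand side this produces $\nlinfecrat(t+1)$, and on the last summand it produces $\nlinfecrat(0)$. In the first summand I would rewrite $\nlinfecappr(t)^r/a^*=(a^*)^{r-1}\nlinfecrat(t)^r$, so that the summand becomes
\[
(1+\delta)\,(a^*)^{r-1}\,\eta n\!\left[\binom{n}{k-2}p\right]^{r}\frac{\nlinfecrat(t)^r}{r!}.
\]

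Next I would invoke \eqref{eq:scaling}, which after raising to the power $r-1$ gives
\[
(a^*)^{r-1}=\frac{(r-1)!}{\eta n\left(\binom{n}{k-2}p\right)^{r}},
\]
so that $(a^*)^{r-1}\eta n(\binom{n}{k-2}p)^{r}=(r-1)!$. Plugging this in collapses the first summand to $(1+\delta)(r-1)!\cdot\nlinfecrat(t)^r/r!=(1+\delta)\nlinfecrat(t)^{r}/r$, which yields exactly the claimed recursion. Since each step is an algebraic identity that holds by definition, there is no real obstacle here; the only thing to be careful about is correctly identifying the exponent on $a^*$ when normalising $\nlinfecappr(t)^r$, which is why the formula for $a^*$ enters in its $(r-1)$-th power.
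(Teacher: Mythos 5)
Your computation is correct and follows essentially the same route as the paper: divide the recursion of Definition~\ref{def:utrajectory} by $a^*$, write $\nlinfecappr(t)^r/a^*=(a^*)^{r-1}\nlinfecrat(t)^r$, and use $(a^*)^{r-1}\eta n\bigl(\binom{n}{k-2}p\bigr)^r=(r-1)!$ from \eqref{eq:scaling} to reduce the coefficient to $(1+\delta)/r$. No issues.
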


\begin{proof}
By Definition~\ref{def:utrajectory} we have
\begin{align*}
 \nlinfecrat(t+1) 
&=(1+\delta)\eta \frac{\nlinfecrat(t)^{r}(a^*)^{r-1}}{r!}n\left[\binom{n}{k-2}p\right]^r+\nlinfecrat(0)\\
&\stackrel{\eqref{eq:scaling}}{=}(1+\delta)\eta \frac{\nlinfecrat(t)^{r}}{r!}\left(\frac{(r-1)!}{\eta n\left(\binom{n}{k-2}p\right)^r}\right)n\left[\binom{n}{k-2}p\right]^r+\nlinfecrat(0)\\
&=(1+\delta)\frac{\nlinfecrat(t)^{r}}{r}+\nlinfecrat(0).
\end{align*}
\end{proof}

Based on the heuristic in Section~\ref{ssec:heuristic} we expect $\beta(t)$ to also have a maximum. We will prove a slightly weaker statement, namely that $\beta\le x_0$ where $x_0$ is the smallest positive solution of the equation:
\begin{equation}\label{eq:boundx}
x_0=(1+\delta)\frac{x_0^{r}}{r}+\nlinfecrat(0).
\end{equation}
First we prove that a solution exists.

\begin{claim}\label{lem:boundx}
If $\beta(0)=(1-\varepsilon)(1-1/r)$, then $x_0$, defined in \eqref{eq:boundx}, satisfies  $$\nlinfecrat(0)<x_0<(1+\delta)^{-1/(r-1)}<1.$$
\end{claim}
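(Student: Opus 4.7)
The plan is to analyse the function
\[
f(x) := (1+\delta)\frac{x^{r}}{r} + \nlinfecrat(0) - x,
\]
whose smallest positive zero is precisely $x_0$ by \eqref{eq:boundx}, and to locate this zero via a one-variable calculus argument.

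First I would compute $f'(x) = (1+\delta)x^{r-1} - 1$, which has the unique positive zero $x^* := (1+\delta)^{-1/(r-1)}$. Since $r \geq 2$, this shows that $f$ is strictly decreasing on $[0,x^*]$ and strictly increasing on $[x^*,\infty)$, so $x^*$ is the unique minimiser of $f$ on $[0,\infty)$. Note also that $x^* < 1$ because $\delta > 0$, which will give the rightmost inequality in the claim for free.

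Next I would evaluate $f$ at $x^*$. Using $(1+\delta)(x^*)^{r-1} = 1$, one gets
\[
f(x^*) = \frac{x^*}{r} + \nlinfecrat(0) - x^* = \nlinfecrat(0) - \left(1-\frac{1}{r}\right) x^*.
\]
Substituting $\nlinfecrat(0) = (1-\varepsilon)(1-1/r)$, the sign condition $f(x^*) < 0$ becomes $(1-\varepsilon) < (1+\delta)^{-1/(r-1)}$, which is exactly equivalent to the standing hypothesis $1/(1+\delta) > (1-\varepsilon)^{r-1}$. Hence $f(x^*) < 0$.

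Finally I would combine the observations. Since $f(0) = \nlinfecrat(0) > 0$ and $f(x^*) < 0$, the intermediate value theorem supplies a root of $f$ in $(0,x^*)$, so the smallest positive root $x_0$ satisfies $x_0 < x^* = (1+\delta)^{-1/(r-1)} < 1$. For the lower bound, note that $f$ is continuous and strictly positive on $[0,x_0)$ (because $f(0) > 0$ and $x_0$ is the first zero). Evaluating at $\nlinfecrat(0)$ gives
\[
f(\nlinfecrat(0)) = (1+\delta)\frac{\nlinfecrat(0)^{r}}{r} > 0,
\]
which forces $\nlinfecrat(0) < x_0$, completing the chain of inequalities. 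There is no real obstacle here: the argument reduces to a monotonicity analysis of a single-variable polynomial, with the only point requiring care being the algebraic verification that the inequality $1/(1+\delta) > (1-\varepsilon)^{r-1}$ is precisely what one needs to make $f(x^*)$ negative.
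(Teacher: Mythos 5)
Your argument is correct and essentially the paper's proof in sign-flipped form: the paper studies $h(x)=x-(1+\delta)x^r/r=\nlinfecrat(0)-f(x)$, observes that it is strictly increasing and continuous on $(0,(1+\delta)^{-1/(r-1)})$, and compares its values at $\nlinfecrat(0)$ and at $(1+\delta)^{-1/(r-1)}$ using the standing assumption $1/(1+\delta)>(1-\varepsilon)^{r-1}$, which is exactly your computation of $f(x^*)$ at the critical point $x^*=(1+\delta)^{-1/(r-1)}$. The only small imprecision is your final step, where ``$f(\nlinfecrat(0))>0$ forces $\nlinfecrat(0)<x_0$'' should be justified by the strict decrease of $f$ on $[0,x^*]$ together with $\nlinfecrat(0)<x^*$ (immediate from the same standing assumption), rather than by positivity of $f$ on $[0,x_0)$ alone; this is a one-line fix.
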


\begin{proof}
First we observe that the function $h(x):=x-(1+\delta)x^r/r$ is strictly increasing and continuous on the interval $(0,(1+\delta)^{-1/(r-1)})$. Since $h(\nlinfecrat(0))\leq\nlinfecrat(0)$ and 
$$h\left(\left(\frac{1}{1+\delta}\right)^{1/(r-1)}\right)=\left(\frac{1}{1+\delta}\right)^{1/(r-1)}\left(1-\frac{1}{r}\right)>(1-\varepsilon)\left(1-\frac{1}{r}\right)=\nlinfecrat(0),$$
such a solution exists.
\end{proof}

Now we can show that $\nlinfecrat(t)<x_0$ for every $t\geq 0$, and thus by Claim~\ref{lem:boundx} we have $\nlinfecappr(t)\leq a^* x_0 <a^*$. 

\begin{lemma}\label{lem:ubound}
Assume that $\nlinfecrat(t)\leq (1-\xi)x_0$ for some $0<\xi<1$. Then there exists a $0<\xi'<1$ such that $\nlinfecrat(t+1)\leq (1-\xi')x_0$.
\end{lemma}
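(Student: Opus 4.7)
The plan is to substitute the hypothesis into the recursion from Claim~\ref{claim:trajectoryRecursion} and compare with the defining equation \eqref{eq:boundx} for $x_0$. Concretely, I would write
\begin{equation*}
\nlinfecrat(t+1) = (1+\delta)\frac{\nlinfecrat(t)^r}{r} + \nlinfecrat(0) \leq (1-\xi)^r (1+\delta)\frac{x_0^r}{r} + \nlinfecrat(0),
\end{equation*}
and then replace $(1+\delta)x_0^r/r$ by $x_0 - \nlinfecrat(0)$ using \eqref{eq:boundx}. This yields
\begin{equation*}
\nlinfecrat(t+1) \leq (1-\xi)^r (x_0 - \nlinfecrat(0)) + \nlinfecrat(0) = (1-\xi)^r x_0 + \bigl(1-(1-\xi)^r\bigr)\nlinfecrat(0).
\end{equation*}

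Next, I would exploit the strict inequality $\nlinfecrat(0) < x_0$ from Claim~\ref{lem:boundx} by writing $\nlinfecrat(0) = (1-\mu)x_0$ where $\mu := 1 - \nlinfecrat(0)/x_0 \in (0,1)$ (positivity uses $x_0 < 1$ and $\nlinfecrat(0) = (1-\varepsilon)(1-1/r) > 0$). Substituting gives
\begin{equation*}
\nlinfecrat(t+1) \leq \bigl[(1-\xi)^r + \bigl(1-(1-\xi)^r\bigr)(1-\mu)\bigr] x_0 = \bigl[1 - \mu\bigl(1-(1-\xi)^r\bigr)\bigr] x_0,
\end{equation*}
so the choice $\xi' := \mu\bigl(1-(1-\xi)^r\bigr)$ delivers $\nlinfecrat(t+1) \leq (1-\xi')x_0$. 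Since $0 < \xi < 1$ forces $0 < 1-(1-\xi)^r < 1$ and $0 < \mu < 1$, we obtain $0 < \xi' < 1$, as required.

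There is essentially no obstacle: the entire argument is a direct algebraic manipulation using the fixed-point identity for $x_0$ and the fact (established earlier) that $\nlinfecrat(0)$ lies strictly between $0$ and $x_0$. The only thing to be careful about is to record that $\xi'$ depends only on $\xi$, $\mu$, and $r$, but not on $t$, which is immediate from the closed-form expression above and will be useful later when iterating the bound across many steps of the query-process.
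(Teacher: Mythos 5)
Your proposal is correct and follows essentially the same route as the paper: substitute the hypothesis into the recursion of Claim~\ref{claim:trajectoryRecursion}, eliminate $(1+\delta)x_0^r/r$ via the fixed-point identity \eqref{eq:boundx}, and absorb the remainder into a multiple of $x_0$. The only (harmless) difference is the last bound on $\nlinfecrat(0)$: the paper uses $\nlinfecrat(0)\le(1-\xi)x_0$ to obtain $\xi'=\xi\bigl(1-(1-\xi)^r\bigr)$, whereas you use $\nlinfecrat(0)=(1-\mu)x_0$ with $\mu$ coming from Claim~\ref{lem:boundx} to obtain $\xi'=\mu\bigl(1-(1-\xi)^r\bigr)$; both choices are valid and independent of $t$.
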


\begin{proof}
By Claim~\ref{claim:trajectoryRecursion} and $\nlinfecrat(t)\leq (1-\xi)x_0$ we have
\begin{align*}
 \nlinfecrat(t+1)&=(1+\delta)\frac{\nlinfecrat(t)^{r}}{r}+\nlinfecrat(0)\leq (1+\delta)\frac{(1-\xi)^{r}x_0^r}{r}+\nlinfecrat(0).
\end{align*}
Adding and subtracting $(1-\xi)^{r}\nlinfecrat(0)$ and using the definition of $x_0$ from \eqref{eq:boundx} gives
\begin{align*}
(1+\delta)\frac{(1-\xi)^{r}x_0^r}{r}+\nlinfecrat(0)
&=(1-\xi)^{r}\left((1+\delta)\frac{x_0^r}{r}+\nlinfecrat(0)\right)+(1-(1-\xi)^r)\nlinfecrat(0)\\
&\stackrel{\eqref{eq:boundx}}{=}(1-\xi)^{r}x_0+(1-(1-\xi)^r)\nlinfecrat(0).
\end{align*}
Using $\nlinfecrat(0)\le \nlinfecrat(t)\le (1-\xi)x_0$ and some simple algebra leads to 
\begin{align*}
(1-\xi)^{r}x_0+(1-(1-\xi)^r)\nlinfecrat(0)
&\leq (1-\xi)^{r}x_0+(1-(1-\xi)^r) (1-\xi)x_0\\
&=(1-\xi(1-(1-\xi)^r))x_0.
\end{align*}
Since $0<\xi<1$, we have $0<1-(1-\xi)^r<1$, and setting $\xi'=\xi(1-(1-\xi)^r)$ completes the proof.
\end{proof}

\subsection{Remaining below the trajectory}

We have established that the trajectory for the number of infected vertices remains below $a^*$. In the remainder of the section we show that, with sufficiently high probability, the number of infected vertices remains bounded from above by  the trajectory. 

We start by showing that as long as the number of infected vertices does not significantly exceed its expectation in each of the previous steps, this also holds with sufficiently high probability in the following step. Let $\mathcal{G}_t$ denote the event that for every $0<\tau\leq t$ we have $|\linfec(\tau)|-|\linfec(\tau-1)|\leq \nlinfecappr(\tau)-\nlinfecappr(\tau-1)$. 

\begin{lemma}\label{lem:upperstep}
For every $t\ge 1$, conditional on $\mathcal{G}_t$ 
we have 
$$|\linfec(t+1)|-|\linfec(t)|\leq \nlinfecappr(t+1)-\nlinfecappr(t)$$ with probability at least $1-\exp(-\delta^2(\nlinfecappr(t+1)-\nlinfecappr(t))/(2k^2))$.
\end{lemma}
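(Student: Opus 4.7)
The plan is to combine the stochastic domination of Corollary~\ref{lem:stochdom} with the Bennett-type concentration of Theorem~\ref{thm:indconc}. Given $\mathcal G_t$, the increment $|\linfec(t+1)|-|\linfec(t)|$ is stochastically dominated by $Y = k^2 Y_1 + k Y_2 + 2k Y_3 + \eta Y_4$, a weighted sum of independent binomials, and the aim is to verify that $\mathbb E[Y]\le(1-\delta/2)(\nlinfecappr(t+1)-\nlinfecappr(t))$ and that $Y$ concentrates tightly enough around its mean to give the exponential tail bound in the lemma.

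Conditioning on $\mathcal G_t$, a telescoping argument gives $|\linfec(\tau)|\le\nlinfecappr(\tau)$ for every $0\le\tau\le t$. Since the function $(y,\Delta)\mapsto(y+\Delta)^r-y^r$ is nondecreasing in each argument on $[0,\infty)^2$, this upgrades to the monotonicity
$$
|\linfec(t)|^r-|\linfec(t-1)|^r \;\le\; \nlinfecappr(t)^r-\nlinfecappr(t-1)^r.
$$
Inserting this into Observation~\ref{obs:starsize} and using the recursion of Definition~\ref{def:utrajectory} yields the crucial identity
$$
\eta S(t+1)\, p^r \;\le\; \eta\,\frac{\nlinfecappr(t)^r-\nlinfecappr(t-1)^r}{r!}\,n\Bigl[\binom{n}{k-2}p\Bigr]^r \;=\; \frac{\nlinfecappr(t+1)-\nlinfecappr(t)}{1+\delta},
$$
and Observations~\ref{obs:overlapsize}--\ref{obs:crosssize} give the analogous deterministic upper bounds on $N(t+1)$, $H(t+1)$ and $W(t+1)$ in terms of the trajectory.

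To show that the star term dominates $\mathbb E[Y]$, I would use that Lemma~\ref{lem:ubound} gives $\nlinfecappr(t)\le a^*$ while $\nlinfecappr(t)\ge\nlinfecappr(0)=(1-\varepsilon)(1-1/r)a^*\asymp a^*$, together with the regime assumption $n^{k-2}p\ll n^{-1/r}$, which yields $a^*\binom{n}{k-2}p=o(1)$. For example, using $\nlinfecappr(t)^r-\nlinfecappr(t-1)^r\ge(\nlinfecappr(t)-\nlinfecappr(t-1))\nlinfecappr(t)^{r-1}$ for the star term, the ratio of $\eta S(t+1)p^r$ to $kH(t+1)p$ is at least of order $\nlinfecappr(t)^{r-2}\,n\binom{n}{k-2}^{r-1}p^{r-1}\asymp 1/(a^*\binom{n}{k-2}p)\to\infty$; completely analogous comparisons handle $k^2 N(t+1)p$ and $2kW(t+1)p^2$. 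Combining, $\mathbb E[Y]\le(1-\delta/2)(\nlinfecappr(t+1)-\nlinfecappr(t))$ for $n$ sufficiently large.

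Finally I would apply Theorem~\ref{thm:indconc} to $Y$ with $M=k^2$ (the largest weight appearing in $Y$, since $\max\{k^2,k,2k,\eta\}=k^2$ for $k\ge 2$) and the elementary estimate $\mathrm{Var}(Y)\le(\max_i w_i)\,\mathbb E[Y]\le k^2\mathbb E[Y]$. Setting $\vartheta=(\nlinfecappr(t+1)-\nlinfecappr(t))-\mathbb E[Y]\ge(\delta/2)(\nlinfecappr(t+1)-\nlinfecappr(t))$ and bounding $\mathrm{Var}(Y)+M\vartheta/3\le k^2(\nlinfecappr(t+1)-\nlinfecappr(t))$, Theorem~\ref{thm:indconc} then produces the claimed exponential bound after absorbing the $\tfrac12$ constants into the exponent. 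The main obstacle is the second step: uniformly verifying throughout the subcritical regime that all three non-star contributions are genuinely $o(\nlinfecappr(t+1)-\nlinfecappr(t))$, which requires using both the upper bound $\nlinfecappr(t)\le a^*$ from Lemma~\ref{lem:ubound} and the lower bound $\nlinfecappr(t)\asymp a^*$, together with careful tracking of the density factors in terms of $a^*\binom{n}{k-2}p$. Once those ratio estimates are secured, the concentration step is routine.
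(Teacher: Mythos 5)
Your proposal is correct and follows essentially the same route as the paper: condition on $\mathcal{G}_t$, bound the family sizes via Observations~\ref{obs:starsize}--\ref{obs:crosssize} using $\nlinfecappr(t)\binom{n}{k-2}p=o(1)$ and $\nlinfecappr(t)=\Theta(a^*)$ so that the star term dominates and contributes at most $(\nlinfecappr(t+1)-\nlinfecappr(t))/(1+\delta)$, then apply the stochastic domination of Corollary~\ref{lem:stochdom} and the concentration inequality of Theorem~\ref{thm:indconc}. The only (immaterial) difference is in the constants of the last step: your crude choices $\vartheta\ge(\delta/2)(\nlinfecappr(t+1)-\nlinfecappr(t))$ (which tacitly requires $\delta\le 1$, harmless since $\delta$ may be shrunk) and $\mathrm{Var}(Y)\le k^2\,\mathbb{E}[Y]$ give the exponent $\delta^2(\nlinfecappr(t+1)-\nlinfecappr(t))/(8k^2)$ rather than the stated $\delta^2(\nlinfecappr(t+1)-\nlinfecappr(t))/(2k^2)$, whereas the paper keeps $\vartheta\ge\tfrac{\delta}{1+\delta}(\nlinfecappr(t+1)-\nlinfecappr(t))$ and bounds the variance by the dominant (star) mean; since only $\exp(-\Omega(a^*))$ is used downstream, this constant-factor discrepancy is inconsequential.
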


\begin{proof}
We start by establishing an upper bound on the expected number of infected vertices.
By Claim~\ref{lem:boundx} and Lemma~\ref{lem:ubound} we have
\begin{equation}\label{eq:exnn}
\nlinfecappr(t)\binom{n}{k-2}p\le a^*\binom{n}{k-2}p=\Theta\left(\left(n\binom{n}{k-2}p\right)^{-1/(r-1)}\right)=o(1). 
\end{equation}
In addition, by Definition~\ref{def:utrajectory} and $\nlinfecappr(0)\le \nlinfecappr(t)\le \nlinfecappr(t+1)$ we have
\begin{align*}
\nlinfecappr(t+1)-\nlinfecappr(t)
&=(1+\delta)\eta \frac{\nlinfecappr(t)^{r}-\nlinfecappr(t-1)^{r}}{r!}n\left[\binom{n}{k-2}p\right]^r\\
&= \left(\nlinfecappr(t)-\nlinfecappr(t-1)\right)\Omega\left(b(0)^{r-1}n\left[\binom{n}{k-2}p\right]^r\right).\\
\end{align*}
Recall that $\nlinfecappr(0)=|\infec(0)|=\Omega(a^*)$. Thus by \eqref{eq:scaling}
\begin{align}
\nlinfecappr(t+1)-\nlinfecappr(t)
&=\left(\nlinfecappr(t)-\nlinfecappr(t-1)\right)\Omega\left((a^*)^{r-1}n\left[\binom{n}{k-2}p\right]^r\right)\nonumber\\
&=\Omega(\nlinfecappr(t)-\nlinfecappr(t-1)).\label{eq:sameorder}
\end{align}

In the following we consider the expected number of successful queries in each of the individual families.

We start with the family of neutron-star-adjacent collections. By Observation~\ref{obs:crosssize} we have
$$N(t+1)p\leq 2 r k(|\linfec(t)|-|\linfec(t-1)|) |\linfec(t)|\binom{n}{k-2}p.$$
By assumption we have $|\linfec(\tau)|-|\linfec(\tau-1)|\leq \nlinfecappr(\tau)-\nlinfecappr(\tau-1)$ for every $\tau\leq t$ implying that $|\linfec(t)|-|\linfec(t-1)|\le \nlinfecappr(t)-\nlinfecappr(t-1)$ and $\linfec(t)\le \nlinfecappr(t)$. Therefore
$$
N(t+1)p
=O\left((\nlinfecappr(t)-\nlinfecappr(t-1)) \nlinfecappr(t) \binom{n}{k-2}p\right)
\stackrel{\eqref{eq:exnn}}{=}o(\nlinfecappr(t)-\nlinfecappr(t-1))
\stackrel{\eqref{eq:sameorder}}{=}o(\nlinfecappr(t+1)-\nlinfecappr(t)).
$$
The following two calculations are done similarly.
When considering the number of heavily-infected collections we start from Observation~\ref{obs:multisize}, which implies
\begin{align*}
H(t+1)p&\leq (|\linfec(t)|-|\linfec(t-1)|)|\linfec(t)|\binom{n}{k-2}p
=o(\nlinfecappr(t+1)-\nlinfecappr(t)).
\end{align*}
In addition, if we consider widely-overlapping pairs of $k$-sets, then by Observation~\ref{obs:overlapsize} we have 
\begin{align*}
W(t+1)p^2&\leq |\linfec(t)|(|\linfec(t)|-|\linfec(t-1)|)n^2\binom{n}{k-3}^2p^2
=o\left(\nlinfecappr(t+1)-\nlinfecappr(t)\right).
\end{align*}
Finally we consider stars. Similarly as before, by assumption we have $|\linfec(\tau)|-|\linfec(\tau-1)|\leq \nlinfecappr(\tau)-\nlinfecappr(\tau-1)$ for every $\tau\leq t$ and thus $|\linfec(t)|-|\linfec(t-1)|\le \nlinfecappr(t)-\nlinfecappr(t-1)$, $\linfec(t)\le \nlinfecappr(t)$ and $\linfec(t-1)\le \nlinfecappr(t-1)$. Therefore
\begin{align*}
|\linfec(t)|^{r}-|\linfec(t-1)|^r
&=(|\linfec(t)|-|\linfec(t-1)|)\sum_{\rho=0}^{r-1}|\linfec(t)|^{\rho}|\linfec(t-1)|^{r-1-\rho}\\
&\le(\nlinfecappr(t)-\nlinfecappr(t-1))\sum_{\rho=0}^{r-1}\nlinfecappr(t)^{\rho}\nlinfecappr(t-1)^{r-1-\rho}\\
&=\nlinfecappr(t)^r-\nlinfecappr(t-1)^r.
\end{align*}
Together with Observation~\ref{obs:starsize} this implies
\begin{align*}
S(t+1)p^r&\leq \frac{|\linfec(t)|^{r}-|\linfec(t-1)|^r}{r!}n\left(\binom{n}{k-2}\right)^r p^r\\
&\leq \frac{\nlinfecappr(t)^r-\nlinfecappr(t-1)^r}{r!}n\left(\binom{n}{k-2}\right)^r p^r\\
&= \frac{\nlinfecappr(t+1)-\nlinfecappr(t)}{(1+\delta) \eta},
\end{align*}
where the last equality follows from Definition~\ref{def:utrajectory}.

By Corollary~\ref{lem:stochdom} we have that $|\linfec(t+1)|-|\linfec(t)|$ is stochastically dominated by
$$k^2\mathrm{Bin}(N(t+1),p)+k\mathrm{Bin}(H(t+1),p)+2k\mathrm{Bin}(W(t+1),p^2)+\eta \mathrm{Bin}(S(t+1),p^r).$$
Together with the previous calculations this implies $\mathbb{E}[|\linfec(t+1)|-|\linfec(t)|]\le(1+o(1))(\nlinfecappr(t+1)-\nlinfecappr(t))/(1+\delta)$
and applying Theorem~\ref{thm:indconc} gives us
\begin{align*}
\mathbb{P}[(|\linfec(t+1)|-|\linfec(t)|)\geq \nlinfecappr(t+1)-\nlinfecappr(t)]&\leq \exp \left(-\frac{(1+o(1))\delta^2(\nlinfecappr(t+1)-\nlinfecappr(t))^2}{2((\nlinfecappr(t+1)-\nlinfecappr(t))+k^2(\nlinfecappr(t+1)-\nlinfecappr(t))/3)}\right)\\
&\leq \exp \left(-\frac{\delta^2(\nlinfecappr(t+1)-\nlinfecappr(t))}{2k^2}\right),
\end{align*}
as required.
\end{proof}

The previous Lemma provides sufficiently high probabilities when the expected number of vertices infected in a given step is large, that is, $\Omega(a^*)$. However, the expected number of infected vertices decreases gradually and thus after a while it will be impossible to establish tight concentration on a step by step basis, and we will require a different method.
In particular we approximate the newly infected vertices with a Galton-Watson branching process and show that it dies out quickly using the Dwass identity. We start by providing an upper bound on the size of a Galton-Watson process.

\begin{lemma}\label{lem:branchsize}
Consider a Galton-Watson process where the number of offspring is the sum of weighted Bernoulli random variables, that is, $\sum_{i=1}^k w_i Be(p_i)$, such that $\mu:=\sum_{i=1}^k w_ip_i< 1$ for $w_i\in \mathbb R^+$ and $p_i\in (0,1)$ for each $1\leq i\leq k$. Set $M=\max_{i=1,\ldots,k}w_i$ and denote by $Z_j$ the number of individuals in the $j$-th generation and by $Z$ the total number of individuals in the process.  Then for any $\chi\ge \mu/(1-\mu)$ and $\ell\in \mathbb N$ we have
$$\mathbb{P}\left[Z>(1+\chi)\ell \mid Z_0=\ell\right]\le \frac{\exp\left(-(1-(1+\chi)^{-1}-\mu)^2(1+\chi)\ell/(3M)\right)}{1-\exp\left(-(1-(1+\chi)^{-1}-\mu)^2(1+\chi)/(3M)\right)}.$$

 \end{lemma}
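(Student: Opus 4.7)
The plan is to combine the Dwass identity (Theorem~\ref{thm:Dwass}) with the Bernstein-type concentration bound of Theorem~\ref{thm:indconc}. Let $Y_1,Y_2,\ldots$ be i.i.d.\ copies of $Y:=\sum_{i=1}^k w_i\mathrm{Be}(p_i)$ and set $S_m:=\sum_{j=1}^m Y_j$. Dwass converts the total-progeny tail into a random-walk tail:
\[
\mathbb{P}[Z>(1+\chi)\ell\mid Z_0=\ell]=\sum_{m>(1+\chi)\ell}\frac{\ell}{m}\,\mathbb{P}[S_m=m-\ell]\le\sum_{m>(1+\chi)\ell}\mathbb{P}[S_m\ge m-\ell],
\]
so it suffices to control each summand and then sum a geometric series.

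For each $m>(1+\chi)\ell$ the expectation is $\mathbb{E}[S_m]=m\mu$ and the required deviation is $\vartheta_m:=m-\ell-m\mu$. The hypothesis $\chi\ge\mu/(1-\mu)$ rearranges to $\alpha:=1-(1+\chi)^{-1}-\mu\ge 0$, and from $\ell<m/(1+\chi)$ one obtains $\vartheta_m\ge m\alpha\ge 0$. Writing $S_m$ as a sum of $mk$ independent scaled Bernoullis, each bounded by $M$ with variance at most $mM\mu$, Theorem~\ref{thm:indconc} gives
\[
\mathbb{P}[S_m\ge m-\ell]\le \exp\!\left(-\frac{\vartheta_m^2}{2M(m\mu+\vartheta_m/3)}\right).
\]
After the algebraic cleanup (using $\mu+\alpha/3\le 1$ to upper-bound the denominator and using the identity $\alpha(1+\chi)=\chi-\mu(1+\chi)$ together with the full $\vartheta_m\ge m\alpha$), this simplifies to a per-term estimate of the form $\exp(-\alpha^2(1+\chi)m/(3M))$.

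Substituting this into the sum over $m>(1+\chi)\ell$ produces a geometric series whose common ratio is $\exp(-\alpha^2(1+\chi)/(3M))$ and whose leading term is at most $\exp(-\alpha^2(1+\chi)\ell/(3M))$, because the smallest admissible index $m$ exceeds $(1+\chi)\ell$. Summing this geometric series yields precisely the right-hand side of the lemma. The main obstacle I anticipate is the Chernoff calibration in the per-term step: the naive inequality $\vartheta_m\ge m\alpha$ alone gives only the weaker per-term bound $\exp(-\alpha^2 m/(3M))$, and the extra factor of $(1+\chi)$ needed to match the denominator in the claimed expression requires carefully retaining $\vartheta_m$ in the Bernstein denominator and exploiting the sharper bound $\vartheta_m\ge m\chi/(1+\chi)-m\mu$ before simplifying. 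A minor secondary technicality is that the Dwass identity as stated in Theorem~\ref{thm:Dwass} concerns integer-valued offspring distributions; if the weights $w_i$ are not integral this is resolved by running the branching process on the (integer) Bernoulli success counts and accounting for the weights separately when measuring $Z$.
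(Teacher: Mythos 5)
Your overall route (Dwass identity, per-term application of Theorem~\ref{thm:indconc} to $S_m$, then a geometric series) is exactly the paper's, and your first two steps are sound: with $\alpha:=1-(1+\chi)^{-1}-\mu\ge 0$ one indeed has $\vartheta_m=m(1-\mu)-\ell\ge m\alpha\ge 0$ and $\mathbb{P}[S_m\ge m-\ell]\le\exp\bigl(-\vartheta_m^2/(2M(m\mu+\vartheta_m/3))\bigr)$. The gap is the ``algebraic cleanup'': the per-term estimate $\exp\bigl(-\alpha^2(1+\chi)m/(3M)\bigr)$ is not derivable and is in fact false. Since $\vartheta_m\le m(1-\mu)$ and the Bernstein exponent is increasing in $\vartheta$, the largest exponent this inequality can ever produce is $m(1-\mu)^2/\bigl(2M(\mu+(1-\mu)/3)\bigr)$, which does not grow with $\chi$, whereas your claimed exponent $\alpha^2(1+\chi)m/(3M)$ does. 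Concretely, for a single unweighted $\mathrm{Be}(1/2)$ offspring ($k=1$, $w_1=1$, $M=1$, $\mu=1/2$) one has $\mathbb{P}[S_m\ge m-\ell]\ge\mathbb{P}[S_m=m]=2^{-m}$, which exceeds your claimed bound as soon as $\alpha^2(1+\chi)>3\ln 2$. Moreover the ``sharper bound'' you propose, $\vartheta_m\ge m\chi/(1+\chi)-m\mu$, is identically equal to $m\alpha$, so it cannot buy the extra factor. The correct per-term bound is the one you call naive, $\exp(-\alpha^2m/(3M))$, and that is precisely where the paper's proof stops (it even uses the cruder $\mathrm{Var}(Z_1\mid Z_0=m)\le Mm$, together with $2(1+\alpha/3)\le 3$).

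With that per-term bound, summing over integers $m>(1+\chi)\ell$ yields the stated numerator $\exp(-\alpha^2(1+\chi)\ell/(3M))$ but with denominator $1-\exp(-\alpha^2/(3M))$, i.e.\ without the factor $1+\chi$ in the exponent; note this is also all that the paper's written proof delivers, and it suffices for every downstream use (Lemma~\ref{lem:branching} only needs $\exp(-\Omega(\ell))$). If you insist on the literal right-hand side of Lemma~\ref{lem:branchsize}, the missing factor should come not from the per-term exponent but from the prefactor you (and the paper) discard in the Dwass step: keeping $\ell/m\le(1+\chi)^{-1}$ gives the bound $\frac{1}{1+\chi}\cdot\frac{\exp(-c(1+\chi)\ell)}{1-e^{-c}}$ with $c=\alpha^2/(3M)$, and the elementary concavity inequality $1-e^{-(1+\chi)c}\le(1+\chi)(1-e^{-c})$ then converts this into the claimed expression. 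Your closing remark about integer-valued offspring is a fair observation that the paper glosses over (in its application the weights $k^2,k,2k,\eta$ are integers), and your suggested fix is reasonable.
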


\begin{proof}
By the Dwass identity (Theorem~\ref{thm:Dwass}) we have that  for any $m>(1+\chi)\ell$,
$$\mathbb{P}\left[Z=m\mid Z_0=\ell\right]
=\frac{\ell}{m} \mathbb{P}\left[Z_1=m-\ell\mid Z_0= m\right] < \mathbb{P}\left[Z_1=m-\ell\mid Z_0= m\right],$$
where the last inequality is due to  $\frac{\ell}{m}<\frac{1}{1+\chi}\le 1 -\mu < 1$, and thus
$$\mathbb{P}\left[Z> (1+\chi)\ell \mid Z_0=\ell\right]\le \sum_{m> (1+\chi)\ell} \mathbb{P}\left[Z_1=m-\ell\mid Z_0= m\right].$$
Using $m >(1+\chi)\ell$ and that the expected number of offspring in the branching process is $\mu$ gives
\begin{align*}
\mathbb{P}\left[Z_1=m-\ell\mid Z_0= m\right]
&\le \mathbb{P}\left[Z_1\ge m-\ell\mid Z_0= m\right]\\
&= \mathbb{P}\left[Z_1-\mathbb{E}\left[Z_1\mid Z_0=m\right]\ge m-\ell-\mathbb{E}\left[Z_1\mid Z_0=m\right]\mid Z_0= m\right]\\
&\le \mathbb{P}\left[Z_1-\mathbb{E}\left[Z_1\mid Z_0=m\right]\ge m-\frac{1}{1+\chi}m-\mu m\mid Z_0= m\right].
\end{align*}

Since $\chi\ge \mu/(1-\mu)$ we have
$$1-\frac{1}{1+\chi}-\mu\ge 0$$
and thus Theorem~\ref{thm:indconc} implies
\begin{align*}
\mathbb{P}\left[Z_1=m-\ell\mid Z_0= m\right]
&\le \exp\left(-\frac{(1-(1+\chi)^{-1}-\mu)^2}{3M}m\right),
\end{align*}
where we use $\mathrm{Var}(Z_1\mid Z_0=m)\le M \mathbb{E}\left[Z_1\mid Z_0 =m\right]\le M m.$

\end{proof}

Recall that $\mathcal{G}_t$ is the event that for every $0<\tau\leq t$ we have $|\linfec(\tau)|-|\linfec(\tau-1)|\leq \nlinfecappr(\tau)-\nlinfecappr(\tau-1)$. 
\begin{lemma}\label{lem:branching}
Set $\chi=4/\delta$. Let $t_0$ be such that $\nlinfecappr(t_0-1)=(1-\xi) x_0 a^*$ where 
\begin{equation}\label{eq:xicond}
\left[\left(\frac{1+\delta}{1+\delta/2}\right)^{1/(r-1)}-1\right]\frac{1}{\chi}> \xi.
\end{equation} 
Then
$$ \mathbb{P}\left[|\linfec_f| \geq \frac{a^*}{(1+\delta/2)^{1/(r-1)}}\mid \mathcal{G}_{t_0}\right]\leq \exp\Big(-\Omega\big(\nlinfecappr(t_0)-\nlinfecappr(t_0-1)\big)\Big).$$
\end{lemma}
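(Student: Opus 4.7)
The plan is to dominate further infections after step $t_0-1$ by the total progeny of a subcritical Galton--Watson process seeded at $D_{t_0}:=|\linfec(t_0)|-|\linfec(t_0-1)|$ individuals and then apply Lemma~\ref{lem:branchsize}. Conditioning on $\mathcal{G}_{t_0}$ gives $|\linfec(t_0-1)|\le (1-\xi)x_0 a^*$ and $D_{t_0}\le \nlinfecappr(t_0)-\nlinfecappr(t_0-1)$, while Claim~\ref{claim:trajectoryRecursion} together with~\eqref{eq:boundx} yields $\nlinfecappr(t_0)<x_0 a^*$. Write $\lambda^*:=(1+\delta/2)^{-1/(r-1)}$, so that the target event becomes $\{|\linfec_f|\ge \lambda^* a^*\}$.

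For the dominating branching process, the crucial computation is the per-parent offspring mean. Using Observations~\ref{obs:starsize}--\ref{obs:crosssize} with the elementary inequality $|\linfec(t)|^{r}-|\linfec(t-1)|^{r}\le r|\linfec(t)|^{r-1}D_t$, and substituting the definition~\eqref{eq:scaling} of $a^*$, the star term $\eta S(t+1)p^r$ is at most $D_t(|\linfec(t)|/a^*)^{r-1}$, while $k^2 N(t+1)p$, $k H(t+1)p$ and $2k W(t+1)p^2$ are each $o(D_t)$ exactly as in the proof of Lemma~\ref{lem:upperstep}. Thus by Corollary~\ref{lem:stochdom} the step-$(t+1)$ increment $D_{t+1}$ is stochastically dominated by a weighted Binomial sum whose total mean divided by $D_t$ equals $(1+o(1))(|\linfec(t)|/a^*)^{r-1}$. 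Define a standard Galton--Watson process $(\hat Z_j)_{j\ge 0}$ with $\hat Z_0=D_{t_0}$ whose per-individual offspring distribution is the weighted Bernoulli sum obtained by replacing $|\linfec(t)|$ with the worst case value $\lambda^* a^*$; its offspring mean is $\mu=(1+o(1))/(1+\delta/2)$. As long as $|\linfec(t)|\le \lambda^* a^*$ one can couple the two processes with $D_{t_0+j}\le \hat Z_j$ for every $j\ge 0$.

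Suppose $\hat Z:=\sum_{j\ge 0}\hat Z_j\le (1+\chi)D_{t_0}$ with $\chi=4/\delta$. Then the cumulative infection count never exceeds
\[
  (1-\xi)x_0 a^*+(1+\chi)D_{t_0}\le x_0 a^*(1+\chi\xi)<\lambda^* a^*,
\]
where the last inequality combines the hypothesis~\eqref{eq:xicond} rewritten as $1+\chi\xi<((1+\delta)/(1+\delta/2))^{1/(r-1)}$ with the bound $x_0<(1+\delta)^{-1/(r-1)}$ from Claim~\ref{lem:boundx}. Consequently the coupling is never broken and $\{|\linfec_f|\ge \lambda^* a^*\}\subseteq \{\hat Z>(1+\chi)D_{t_0}\}$. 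Since $\chi=4/\delta\ge \mu/(1-\mu)=2/\delta+o(1)$, Lemma~\ref{lem:branchsize} applied to $(\hat Z_j)$ bounds $\mathbb{P}[\hat Z>(1+\chi)D_{t_0}]$ by $\exp(-\Omega(D_{t_0}))$. To match the stated exponent, note that the deterministic step really only demands $\hat Z\le T:=(\lambda^*-(1-\xi)x_0)a^*=\Theta(a^*)$; when $(1+\chi)D_{t_0}<T$ one instead applies Lemma~\ref{lem:branchsize} with the larger ratio $T/D_{t_0}$ to obtain $\mathbb{P}[\hat Z>T]\le \exp(-\Omega(T))$, and since $T\ge \nlinfecappr(t_0)-\nlinfecappr(t_0-1)$ this yields the claimed bound.

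The main obstacle is the second paragraph: the dominating offspring distribution must simultaneously be an iid weighted Bernoulli sum, so that Lemma~\ref{lem:branchsize} applies to $\hat Z$, and must stochastically dominate the true per-step increments $D_{t_0+j+1}$. Achieving this requires allocating each infection in step $t+1$ uniquely to a ``parent'' in $\linfec(t)\setminus\linfec(t-1)$, using the ``new-vertex'' clauses in Definitions~\ref{def:stars}--\ref{def:intersecting} to guarantee such a parent exists, and then checking that replacing the actual per-parent weights by their supremum over $|\linfec(t)|\le\lambda^* a^*$ preserves the stochastic ordering until the coupling is broken. The remainder of the argument is purely algebraic and uses only Claim~\ref{lem:boundx} and the condition~\eqref{eq:xicond}.
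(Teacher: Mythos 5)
Your proposal is correct and follows essentially the same route as the paper: assign each newly infected vertex a unique parent in $\linfec(t)\setminus\linfec(t-1)$ via the family definitions, bound the per-parent family sizes under the cap $|\linfec(t)|\le (1+\delta/2)^{-1/(r-1)}a^*$ to dominate the offspring by a fixed weighted-Bernoulli Galton--Watson law with mean bounded below $1$, verify $(1-\xi)x_0+(1+\chi)\xi x_0<(1+\delta/2)^{-1/(r-1)}$ from Claim~\ref{lem:boundx} and \eqref{eq:xicond}, and apply Lemma~\ref{lem:branchsize}. The only cosmetic deviation is that you seed the branching process with the random increment $D_{t_0}$ rather than the deterministic $\nlinfecappr(t_0)-\nlinfecappr(t_0-1)$ as the paper does, and your adjustment via the threshold $T=\Theta(a^*)$ correctly recovers the stated exponent.
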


\begin{proof}
By Lemma~\ref{lem:ubound} and Definition~\ref{def:utrajectory} we have $\nlinfecappr(t)\le x_0 a^*$ for every $t\ge 0$, which together with $\nlinfecappr(t_0-1)=(1-\xi) x_0 a^*$ implies $\nlinfecappr(t_0)-\nlinfecappr(t_0-1)\leq \xi x_0 a^*$, thus 	
\begin{equation}\label{eq:aprimecalc1}
\nlinfecappr(t_0-1)+(1+\chi)(\nlinfecappr(t_0)-\nlinfecappr(t_0-1))\leq (1-\xi)x_0 a^*+(1+\chi)\xi x_0 a^*
=(1+\xi \chi) x_0 a^*.
\end{equation}
Using the upper bound on $x_0$ from Claim~\ref{lem:boundx} and \eqref{eq:xicond} gives
\begin{align*}
(1+\xi \chi) x_0 &\stackrel{Clm~\ref{lem:boundx}}{\leq} \frac{1+\xi \chi}{(1+\delta)^{1/(r-1)}}
\stackrel{\eqref{eq:xicond}}{\leq} \frac{1}{(1+\delta/2)^{r-1}}.
\end{align*}
Together with \eqref{eq:aprimecalc1} this implies
\begin{equation}\label{eq:aprimecalc}
\nlinfecappr(t_0-1)+(1+\chi)(\nlinfecappr(t_0)-\nlinfecappr(t_0-1)) \le \frac{a^*}{(1+\delta/2)^{r-1}}=:a'.
\end{equation}

Note that if $\mathcal{G}_{t_0}$ holds then $\linfec(t_0-1)\le \nlinfecappr(t_0-1)$, 
therefore an upper bound on the probability that the total number of vertices infected after step $t_0-1$ is at least $(1+\chi)(\nlinfecappr(t_0)-\nlinfecappr(t_0-1))$ is an upper bound on the probability that $|\linfec_f|\ge a'$.

{\bf Forest construction.} We construct a forest from the vertices in $\linfec_f\backslash \linfec(t_0-1)$. The roots of the individual trees in the forest are the vertices in $\linfec(t_0)\setminus \linfec(t_0-1)$. For $t\ge t_0$ we will assign each vertex infected in step $t+1$, that is, each vertex in the set $\linfec(t+1)\backslash\linfec(t)$, to  at least one vertex infected in the previous step, that is, a vertex in $\linfec(t)\backslash \linfec(t-1)$, and then discard some edges to transform this into a tree. 

In order to achieve this we will consider subsets of $\dom(t+1),\overlap(t+1),\multi(t+1)$ and $\cross(t+1)$ associated to one vertex $u\in \linfec(t)\setminus\linfec(t-1)$ as follows:
\begin{itemize}
	\item $\dom_u(t+1)\subseteq \dom(t+1)$, where one of the $k$-sets in the collection contains $u$;
	\item $\overlap_u(t)\subseteq \overlap(t)$, where one of the $k$-sets in the pair contains $u$;
	\item $\multi_u(t+1)\subseteq \multi(t)$, where the $k$-set contains $u$;
	\item $\cross_u(t+1)\subseteq \cross(t+1)$ be the set of $k$-sets satisfying one of the following conditions:
	\begin{itemize}
		\item $K\in \cross_1(t+1)$ and $u\in K$;
		\item $K\in \cross_2(t+1)$ and there exists $e\in \infedge(t)\setminus \infedge(t-1)$ such that $K\cap e \cap \linfec(t)^C\neq \emptyset$ and $u\in e$.
	\end{itemize}
\end{itemize}

We say that $u\in \linfec(t)\setminus \linfec(t-1)$ is a potential parent of $v\in \linfec(t+1)\setminus \linfec(t)$ if one of the following two options hold. First, if there exists a successfully queried collection of $k$-sets in $\dom_u(t+1),\overlap_u(t+1)$, or $\multi_u(t+1)$, where one of the hyperedges in the collection contains $v$. Second, if there exists a successfully queried $k$-set $K\in \cross_u(t+1)$ such that either $v\in K$ or $v\in e$ for some hyperedge $e$ which intersects $K$ in an uninfected vertex, that is $K\cap e \cap \linfec(t)^C\neq\emptyset$.  This is visualised in Figure~\ref{fig:forestconstruction}.

\begin{figure}[ht]
	\begin{center}
		\begin{tikzpicture}[scale=0.7]
			
			\node (S0) at (-4,0) [circle, draw, fill=red, inner sep=2pt, label=right:$v$] {};
			
			\node (S1) at (-5,-1) [circle, draw,  inner sep=2pt] {};
			\node (S2) at (-6,-2) [circle, draw, inner sep=2pt] {};
			\node (S3) at (-7,-3) [circle, fill=blue, inner sep=2pt, label=left:$u$] {};
			
			\node (S4) at (-5,0) [circle, draw, inner sep=2pt] {};
			\node (S5) at (-6,0) [circle, draw, inner sep=2pt] {};
			\node (S6) at (-7,0) [circle, fill=black, inner sep=2pt] {};
			
			\node (S7) at (-5,1) [circle, draw, inner sep=2pt] {};
			\node (S8) at (-6,2) [circle, draw, inner sep=2pt] {};
			\node (S9) at (-7,3) [circle, fill=black, inner sep=2pt] {};
			
			\draw[black,very thick]\convexpath{S0,S3}{0.27cm};
			\draw[black,very thick]\convexpath{S0,S6}{0.27cm};
			\draw[black,very thick]\convexpath{S0,S9}{0.27cm};
			
			\node at (-6,-7.5){(a) stars};
			
			\node (X00) at (-6,-5) [circle, fill=blue, inner sep=2pt, label=$u$] {};
			\node (X01) at (-6,-6) [circle, draw, fill=red, inner sep=2pt, label=below:$v$] {};
			\draw (X00)--(X01);

			\node (O0) at (1,0) [circle,draw, fill=red, inner sep=2pt, label=right:$v_1$] {};
			\node (O1) at (0,0) [circle,draw, fill=red, inner sep=2pt, label=right:$v_2$] {};
			
			\node (O2) at (-1,-1) [circle,draw, fill=red, inner sep=2pt, label=right:$v_3$] {};
			\node (O3) at (-2,-2) [circle,fill=blue, inner sep=2pt, label=left:$u$] {};
			
			\node (O4) at (-1,1) [circle,draw, fill=red, inner sep=2pt, label=right:$v_4$] {};
			\node (O5) at (-2,2) [circle,fill=black, inner sep=2pt] {};
			
			\draw[black,very thick]\convexpath{O3,O1,O0}{0.27cm};
			\draw[black,very thick]\convexpath{O5,O0,O1}{0.27cm};
			
			\node at (-0.9,-7.5)[text width=3.5cm]{(b) widely-overlapping};
			
			\node (X10) at (-0.5,-5) [circle, fill=blue, inner sep=2pt, label=$u$] {};
			\node (X11) at (-2,-6) [circle, draw, fill=red, inner sep=2pt, label=below:$v_1$] {};
			\node (X12) at (-1,-6) [circle, draw, fill=red, inner sep=2pt, label=below:$v_2$] {};
			\node (X13) at (0,-6) [circle, draw, fill=red, inner sep=2pt, label=below:$v_3$] {};
			\node (X14) at (1,-6) [circle, draw, fill=red, inner sep=2pt, label=below:$v_4$] {};
			\draw (X10)--(X11);
			\draw (X10)--(X12);
			\draw (X10)--(X13);
			\draw (X10)--(X14);			
			
			\node (M0) at (6,0) [circle,draw, fill=red, inner sep=2pt,label=$v_2$] {};
			\node (M1) at (5,0) [circle, draw, fill=red, inner sep=2pt, label=$v_1$] {};
			\node (M2) at (4,0) [circle, fill=blue, inner sep=2pt,label=$u_2$] {};
			\node (M3) at (3,0) [circle, fill=blue, inner sep=2pt, label=$u_1$] {};
			
			\draw[black,very thick]\convexpath{M0,M3}{0.27cm};
			
			\node at (4.5,-7.5)[text width=3cm]{(c) heavily-infected};
			
			\node (X20) at (4,-5) [circle, fill=blue, inner sep=2pt, label=$u_1$] {};
			\node (X21) at (5,-5) [circle, fill=blue, inner sep=2pt, label=$u_2$] {};
			\node (X22) at (4,-6) [circle, draw, fill=red, inner sep=2pt, label=below:$v_1$] {};
			\node (X23) at (5,-6) [circle, draw, fill=red, inner sep=2pt, label=below:$v_2$] {};
			\draw (X20)--(X22);
			\draw (X20)--(X23);
			\draw (X21)--(X22);
			\draw (X21)--(X23);

			\node (L0) at (11,0) [circle,fill=blue, inner sep=2pt,label=right:$u_2$] {};
			
			\node (L1) at (10,-1) [circle, draw, inner sep=2pt] {};
			\node (L2) at (9,-2) [circle, draw, inner sep=2pt] {};
			\node (L3) at (8,-3) [circle, fill=black, inner sep=2pt] {};
			
			\node (L4) at (10,0) [circle, draw, inner sep=2pt] {};
			\node (L5) at (9,0) [circle, draw, inner sep=2pt] {};
			\node (L6) at (8,0) [circle, fill=black, inner sep=2pt] {};
			
			\node (L7) at (10,1) [circle, draw, fill=red, inner sep=2pt, label=below right:$v_4$] {};
			\node (L8) at (9,2) [circle, draw, fill=red, inner sep=2pt] {};
			\node (L9) at (8,3) [circle, fill=black, inner sep=2pt] {};
			
			\node (L10) at (8,2) [circle, fill=blue, inner sep=2pt, label=left:$u_1$] {};
			\node (L11) at (9,2) [circle, draw, fill=red, inner sep=2pt,label=above right:$v_1$] {};
			\node (L13) at (10,2) [circle, draw, fill=red, inner sep=2pt,label=above right:$v_2$] {};
			\node (L12) at (11,2) [circle, draw, fill=red, inner sep=2pt,label=above right:$v_3$] {};
			
			\draw[black,very thick]\convexpath{L0,L3}{0.27cm};
			\draw[black,very thick]\convexpath{L0,L6}{0.27cm};
			\draw[black,very thick]\convexpath{L0,L9}{0.27cm};
			\draw[black,very thick]\convexpath{L10,L12}{0.27cm};
			
			\node at (9.5,-7.5)[text width=3cm] {(d) neutron-star adjacent};
			
			\node (X30) at (9,-5) [circle, fill=blue, inner sep=2pt, label=$u_1$] {};
			\node (X31) at (10,-5) [circle, fill=blue, inner sep=2pt, label=$u_2$] {};
			\node (X32) at (8,-6) [circle, draw, fill=red, inner sep=2pt, label=below:$v_1$] {};
			\node (X33) at (9,-6) [circle, draw, fill=red, inner sep=2pt, label=below:$v_2$] {};
			\node (X34) at (10,-6) [circle, draw, fill=red, inner sep=2pt, label=below:$v_3$] {};
			\node (X35) at (11,-6) [circle, draw, fill=red, inner sep=2pt, label=below:$v_4$] {};
			\draw (X30)--(X32);
			\draw (X30)--(X33);
			\draw (X30)--(X34);			
			\draw (X30)--(X35);						
			\draw (X31)--(X32);
			\draw (X31)--(X33);			
			\draw (X31)--(X34);
			\draw (X31)--(X35);			
			
		\end{tikzpicture}
		\caption{Potential parents.\\ Empty circles represent uninfected vertices, while the filled circles represent infected vertices. Red circles were infected in step $t+1$, blue in step $t$ and black in the first $t-1$ steps.}
		\label{fig:forestconstruction}
	\end{center}
\end{figure}

Note that $\dom(t+1)=\bigcup_{u\in \linfec(t)\setminus \linfec(t-1)} \dom_u(t+1)$ and the analogous statements hold for $\overlap(t+1),\multi(t+1)$ and $\cross(t+1)$.
Therefore, every vertex $v\in \linfec(t+1)\backslash\linfec(t)$ has at least one potential parent. 
As we would like this relationship to be unique for every vertex $v$, we assign a unique parent, say the smallest vertex $u$, and thus for any $u_1,u_2$ the set of offspring of $u_1$ and $u_2$ are disjoint.

This construction is reminiscent of a branching process, except the offspring distribution of the individual vertices depends on the current state, in particular on the size of the sets $\dom_u(t+1),\overlap_u(t+1),\multi_u(t+1)$, and $\cross_u(t+1)$. In the following we will remove this dependence.

For $i\ge 0$ let $M_i=|\linfec(t_0+i)\setminus \linfec(t_0+i-1)|$.
Consider a stopping condition when $\sum_{i=0}^j \bpo_i$ becomes larger than $(1+\chi)(\nlinfecappr(t_0)-\nlinfecappr(t_0-1))$, i.e.,\ we define the random variable
$$
\bps_i =
\left\{
\begin{array}{ll}
	\bpo_i  & \mbox{if } \sum_{j=0}^{i-1}\bps_{j} <  (1+\chi)(\nlinfecappr(t_0)-\nlinfecappr(t_0-1)),\\
	0 & \mbox{otherwise},
\end{array}
\right.
$$
and similarly as before let $\bps=\sum_{i=0}^{\infty}\bps_i$. Clearly 
$$\mathbb{P}[\bpo\ge (1+\chi)(\nlinfecappr(t_0)-\nlinfecappr(t_0-1))\mid \mathcal{G}_t]=\mathbb{P}[\bps\ge (1+\chi)(\nlinfecappr(t_0)-\nlinfecappr(t_0-1))\mid \mathcal{G}_t].$$

Now consider the offspring distribution of a vertex in the $i$-th generation of $\bps$. 
Recall that the successful query of a star collection leads to $\eta$ infected vertices, the successful query of a widely-overlapping collection infects at most $2k$ infected vertices, the successful query of a heavily-infected collection results in at most $k$ vertices becoming infected and the successful query of a neutron-star-adjacent collection gives at most $k^2$ infected vertices. 
Therefore, by mimicking the proof of Corollary~\ref{lem:stochdom}, Lemma~\ref{lem:indepbound} implies that the number of offspring is stochastically dominated by
$$k^2\mathrm{Bin}(|\cross_u(t+1)|,p)+k\mathrm{Bin}(|\multi_u(t+1)|,p)+2k\mathrm{Bin}(|\overlap_u(t+1)|,p^2)+\eta \mathrm{Bin}(|\dom_u(t+1)|,p^r).$$

If the process has not stopped yet, then we have $\nlinfecappr(t_0-1)+\sum_{j=0}^i M_j' \le a'$ and  
following the calculations in Observations~\ref{obs:starsize}, \ref{obs:overlapsize}, \ref{obs:multisize} and \ref{obs:crosssize} we deduce that 

\begin{enumerate}[label=(\alph*)]
\item $|\cross_u(t+1)|\le 2rk a' \binom{n}{k-2}$;
\item $|\multi_u(t+1)|\le a'\binom{n}{k-2}$;
\item $|\overlap_u(t+1)|\le n^2 a'\binom{n}{k-3}^2$;
\item $|\dom_u(t+1)|\le  n \binom{a'}{r-1}\binom{n}{k-2}^r$.
\end{enumerate}
Thus we deduce that the offspring distribution is stochastically dominated by
\begin{equation}\label{eq:offspringdistr}
k^2 \mathrm{Bin}\left(3rk a' \binom{n}{k-2},p\right)+2k\mathrm{Bin}\left( n^2 a'\binom{n}{k-3}^2,p^2\right)+\eta \mathrm{Bin}\left( n \binom{a'}{r-1}\binom{n}{k-2}^r,p^r\right),
\end{equation}
and note that this also holds after the stopping time.

{\bf Coupling with Galton-Watson process.}
Denote by $\bpd$ the Galton-Watson process with $\nlinfecappr(t_0)-\nlinfecappr(t_0-1)$ roots  where the number of children of every vertex follows \eqref{eq:offspringdistr}.
Since $\mathcal{G}_{t_0}$ holds, the number of roots in $\bps$ is at most the number of roots in $\bpd$ and the offspring distribution for $\bpd$ stochastically dominates the offspring distribution for any vertex in $\bps$ implying
$$\mathbb{P}[\bps\ge (1+\chi)(\nlinfecappr(t_0)-\nlinfecappr(t_0-1))\mid \mathcal{G}_t]\le \mathbb{P}[\bpd\ge (1+\chi)(\nlinfecappr(t_0)-\nlinfecappr(t_0-1))].$$

Note that the expected number of offspring of an individual in $Z$ is
$$3rk^3 a' \binom{n}{k-2}p+2kn^2 a'\binom{n}{k-3}^2p^2+ \eta n\binom{a'}{r-1}\binom{n}{k-2}^r p^r \stackrel{\eqref{eq:scaling},\eqref{eq:aprimecalc}}{\leq} \frac{1}{1+\delta/3},$$
and the maximal weight is $k^2$.

Note that 
$$\chi=\frac{4}{\delta}> \frac{3}{\delta}= \frac{(1+\delta/3)^{-1}}{1-(1+\delta/3)^{-1}}.$$

Therefore by Lemma~\ref{lem:branchsize} we have
$$\mathbb{P}[\bpd\ge (1+\chi)(\nlinfecappr(t_0)-\nlinfecappr(t_0-1))]=\exp\left(-\Omega\left(\nlinfecappr(t_0)-\nlinfecappr(t_0-1)\right)\right),$$
and the statement follows.
\end{proof}

\subsection{Proof of Theorem~\ref{thm:hypergraph}\,\ref{case:sub}}
Using Lemma~\ref{lem:upperstep} we can bound the number of vertices infected in a given step. A repeated application of this lemma allows us to show that the number of vertices which become infected in a step decreases sufficiently over a bounded number of steps. Then we complete the proof by applying Lemma~\ref{lem:branching}. In this section we focus on the interaction of these two lemmas, and in particular we determine the number of steps required for the conditions of Lemma~\ref{lem:branching} to be met and calculate the relevant probabilities using the union bound.     

\begin{proof}[Proof of Theorem~\ref{thm:hypergraph}~\ref{case:sub}]

Set $\chi=4/\delta$. For any fixed $\xi'>0$ satisfying $\left[\left(\frac{1+\delta}{1+\delta/2}\right)^{1/(r-1)}-1\right]\frac{1}{\chi}> \xi'$ let $t_0(\xi')$ be the smallest value of $t$ such that $\beta(t-1)\geq (1-\xi')x_0$. First we show that $t(\xi')=O(1)$ and $\beta(\tau+1)-\beta(\tau)=\Omega(1)$ for every $\tau\leq t_0(\xi')$.

Assume that $\nlinfecrat(\tau)\leq (1-\xi')x_0$. Claim~\ref{lem:boundx} implies that $x_0\leq (1+\delta)^{-1/(r-1)}$ and note that the function $(1+\delta)y^r/r-y$ is monotone decreasing on the interval $[0,x_0]$. Therefore by Definition~\ref{def:utrajectory} 
\begin{align}
\nlinfecrat(\tau+1)-\nlinfecrat(\tau)&=(1+\delta)\frac{\nlinfecrat(\tau)^r}{r}+\nlinfecrat(0)-\nlinfecrat(\tau)\nonumber\\
&\geq  (1+\delta)\frac{(1-\xi')^r x_0^r}{r}+\nlinfecrat(0)-(1-\xi')x_0\nonumber\\
&\stackrel{\eqref{eq:boundx}}{=} (1-\xi')^r (x_0-\nlinfecrat(0))+\nlinfecrat(0)-(1-\xi')x_0\nonumber\\
&= (1-(1-\xi')^r)\nlinfecrat(0)-(1-\xi')(1-(1-\xi')^{r-1})x_0. \label{eq:lowerbeta1}
\end{align}
Next we bound $\nlinfecrat(0)$ from bellow. By \eqref{eq:boundx} we have 
\begin{align}
\beta(0)=x_0-(1+\delta)\frac{x_0^r}{r}=x_0\left(1-(1+\delta)\frac{x_0^{r-1}}{r}\right)\stackrel{Clm.\ref{lem:boundx}}{\geq} x_0\left(1-\frac{1}{r}\right). \label{eq:lowerbeta2}
\end{align}
Together \eqref{eq:lowerbeta1} and \eqref{eq:lowerbeta2} imply
\begin{align*}
\nlinfecrat(\tau+1)-\nlinfecrat(\tau)&\geq (1-(1-\xi')^r)\nlinfecrat(0)-(1-\xi')(1-(1-\xi')^{r-1})x_0\\
&\geq (1-(1-\xi')^r)\left(1-\frac{1}{r}\right)x_0-(1-\xi')(1-(1-\xi')^{r-1})x_0.
\end{align*}
Since $0<\xi'<1$ and $r\ge 2$, using the summation formula for the geometric series gives
\begin{align*}
	\frac{1-(1-\xi')^r}{r}\le \xi'\frac{1+(r-1)(1-\xi')}{r}=\xi'-\frac{r-1}{r}(\xi')^2,
\end{align*}
and we also have
\begin{align*}
(1-(1-\xi')^r)-(1-\xi')(1-(1-\xi')^{r-1})=\xi'.
\end{align*}
 
Together these imply
\begin{align*}
\nlinfecrat(\tau+1)-\nlinfecrat(\tau)&\geq \frac{r-1}{r}(\xi')^2 x_0.
\end{align*}

Therefore in every step before $t_0(\xi')$ the value of $\nlinfecrat(t)$ increases by at least $(r-1)(\xi')^2 x_0/r >0$ implying $t_0(\xi')=O(1)$ and $\beta(\tau+1)-\beta(\tau)=\Omega(1)$ for every $\tau\leq t_0(\xi')$. 

Now fix $\xi'$, set $t_0=t_0(\xi')$ and define $\xi$ implicitly by $\nlinfecrat(t_0-1)=(1-\xi)x_0$.
Lemma~\ref{lem:upperstep} implies that for every $0<t\leq t_0$ we have $|\linfec(t)|-|\linfec(t-1)|\leq \nlinfecappr(t)-\nlinfecappr(t-1)$ with probability $1-\exp(-\Omega(a^*))$ and Lemma~\ref{lem:branching} implies that with probability $1-\exp(-\Omega(a^*))$ the total number of infected vertices is at most $a^*$, completing the proof.

\end{proof}

\section{supercritical regime}\label{sec:supercritical}

In this section we turn our attention to the supercritical regime of Theorem~\ref{thm:hypergraph}. Throughout this section  we assume that  a constant $0<\varepsilon<1$ is given and the size of the initial infection set satisfies
	$$a:=|\infec(0)|=(1+\varepsilon)a_c.$$  
Similarly to the previous section we define a new process, called {\em mild $r$-bootstrap} percolation, which will infect  overall fewer vertices than the original $r$-bootstrap percolation. In particular, in each step we activate the right amount of vertices from the set of infected vertices, which have never been active before, and only expose the hyperedges which contain exactly one vertex which was activated in this step and no additional infected vertices. When $r\ge 3$, for a given vertex we count the number of its active neighbours within the hypergraph spanned by the hyperedges exposed so far and infect it, when the number of active neighbours reaches $r$. When $r=2$ the situation is slightly different. If a vertex has two active neighbours within the hyperedges exposed so far, then obviously it becomes infected.  
Following the rules of $r$-bootstrap percolation, this implies that every vertex within its neighbourhood, with respect to the hyperedges exposed so far, would also become infected, as every hyperedge contains an active vertex which is infected. Therefore in each step we add every vertex which has two active neighbours, and every vertex in its neighbourhood, within 
 the hyperedges exposed until this step.

The majority of the proof for the supercritical regime of Theorem~\ref{thm:hypergraph} considers the case when the number of infected vertices is small, that is, $o((n^{k-2}p)^{-1})$. 
In order to establish the probability that a given vertex becomes infected, we will not only need to track the number of infected vertices, but also the number of vertices which have at least $i$ active neighbours for $0\leq i \leq r$.
We start by establishing some properties of mild $r$-bootstrap percolation within one step. Then we establish for each $i$ the trajectory for the number of vertices with at least $i$ active neighbours and do similarly for the number of infected vertices. In addition, we prove that the typical behaviour of the process in fact follows these trajectories as long as the number of infected vertices remains small (Lemma~\ref{lem:uindstep}), which guarantees a steady increase in the number of infected vertices, until roughly $(n^{k-2}p)^{-1}$ vertices become infected. After this point the process finishes in 2 steps (Lemmas~\ref{lem:penultimatestep} and \ref{lem:finalstep}).

\subsection{Lower coupling via mild $r$-bootstrap percolation}

In mild $r$-bootstrap percolation the set of infected vertices is denoted by $\uinfec(t)$ and set $\uinfec(0)=\infec(0)$. Let  $\uinfec_f$ denote the set of vertices which become infected during mild $r$-bootstrap percolation.
We also introduce $\act(t)\subseteq \uinfec(t)$ which contains the vertices which have been already activated and set $\act(0)=\emptyset$. 
In addition, we require the sets $\uinfec_i(t)\subseteq V\setminus \uinfec(0)$ for $0\le i \le r$, which  contain the subset of the vertices with at least $i$ active neigbhours. Finally we let $\infedge(t)$ denote the set of hyperedges exposed until the end of step $t$, with $\infedge(0)=\emptyset$.

In step $t+1$ of the process we select a sufficiently large set $\act'(t+1)\subseteq \uinfec(t)\setminus \act(t)$, where the exact size of this set is defined in \eqref{eq:activesize}, and expose every hyperedge which contains exactly one vertex in $\act'(t+1)$ and no vertex in $\uinfec(t)$. Denote this set of hyperedges by $\infedge'(t+1)$. First set $\act(t+1)=\act(t)\cup \act'(t+1)$ and $\infedge(t+1)=\infedge(t)\cup \infedge'(t+1)$. 
In addition let $\uinfec_i(t+1)$ contain every vertex in $\uinfec_i(t)$ and every vertex found in $\uinfec_j(t)$ which has at least $i-j$ neighbours in $\act'(t+1)$ in the hypergraph spanned by $\infedge'(t+1)$. 
Finally we define the set of infected vertices. When $r\ge 3$, we simply have $\uinfec(t+1)=\uinfec(t)\cup \uinfec_r(t+1)$. On the other hand, when $r=2$, the set $\uinfec(t+1)$ contains every vertex in $\uinfec(t)$ and every vertex in the closed neighbourhood of $\uinfec_2(t+1)\setminus \uinfec_2(t)$ within the hypergraph spanned by $\infedge(t+1)$.

Note that changing the step in which a vertex becomes infected, has no affect on the final set of infected vertices. 
Moreover, infection will spread only to a smaller set if some infected vertices remain inactive.
Together with the fact that in mild $r$-bootstrap percolation only a subset of the hyperedges are exposed when determining whether a vertex becomes infected, we have $$\uinfec_f\subseteq \infec_f.$$ 

\subsection{Properties of mild $r$-bootstrap percolation}
The aim of this section is to establish some properties of mild $r$-bootstrap percolation which will lead to lower bounds for the number of vertices added to $\uinfec_{i}(t)$ and $\uinfec(t)$ in a single step, during the early stages of the process.

Roughly speaking, when we add every vertex in a hyperedge to a set, that is, for $\uinfec_1(t)$ and $\uinfec(t)$ when $r=2$, there is a strong correlation between two vertices being added to the set. 
In the remaining cases, namely for $\uinfec_i(t)$ when $2\le i \le r$ and $\uinfec(t)$ for $r\ge 3$, this is no longer the case, as in these cases two vertices will rarely be found in the same hyperedge. We require separate arguments for these two cases, which will be discussed in the remainder of this section, starting with the latter.

In our approach we will use Lemma~\ref{lem:negcorr} (for dependent random variables) to provide an upper bound on the lower tail for the number of vertices added to $C_i(t)$ in a given step.  
As Lemma~\ref{lem:negcorr} provides an upper bound on the upper tail of a random variable, we will use it to derive an upper bound on the number of vertices not added to $C_i(t)$ in a given step. 
In the following lemma we show that the associated random variables satisfy the condition of Lemma~\ref{lem:negcorr}.

\begin{lemma}\label{lem:negcorrappl}
Consider $H_k(n,p)$ with vertex set $V=[n]$ and hyperedge set $E$. Let $U,V'\subseteq V$ be two disjoint subsets with $|V'|=o(n)$ while  $U$ satisfies both $|U|=\Omega(a_c)$ and $|U|=o((n^{k-2}p)^{-1})$. Denote by $E'\subseteq E$ the set of hyperedges which contain exactly one vertex in $U$ and no vertices in $V'$. Consider $W\subseteq V\setminus (U\cup V')$ and an arbitrary function $g:W\to \{1,\ldots,r\}$ such that $|\{u:g(u)=1\}|=o(n)$. 
For $u\in W$ let $Z_u$ be the indicator random variable for the event that $u$ has less then $g(u)$ neighbours in $U$, within the graph spanned by $E'$.
Then we have for every $I\subseteq W$
$$\mathbb{E}\left[\prod_{i\in I} Z_i\right]\leq \mathbb{E}\left[\prod_{i\in I} \widehat{Z}_i\right],$$
where $\widehat{Z}_u$ for $u\in W$ forms a set of independent Bernoulli random variables satisfying 
$$\mathbb{P}[\widehat{Z}_u=1]=1-(1-o(1))\frac{|U|^{g(u)}}{g(u)!}\left(\binom{n}{k-2}p\right)^{g(u)}.$$
\end{lemma}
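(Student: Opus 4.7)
The proof naturally splits into a marginal probability computation and a joint-probability argument. For the marginal bound, fix $u\in I$ and set $Y_{u,v}:=\mathbbm{1}[u\text{ is a neighbour of }v\text{ via }E']$ for each $v\in U$. The defining property of $E'$---that each of its hyperedges contains exactly one vertex of $U$---ensures that the potential hyperedges witnessing $Y_{u,v}$ and $Y_{u,v'}$ are disjoint for $v\neq v'$, so $(Y_{u,v})_{v\in U}$ is a family of mutually \emph{independent} Bernoulli random variables of parameter $q:=1-(1-p)^{\binom{n-|U|-|V'|-1}{k-2}}=(1-o(1))\binom{n}{k-2}p$. Consequently $N_u:=\sum_{v\in U}Y_{u,v}\sim\mathrm{Bin}(|U|,q)$, and since $|U|q=o(1)$ by the hypothesis $|U|=o((n^{k-2}p)^{-1})$, a direct Poisson-type computation gives
\[
\mathbb{P}[Z_u=0]=\mathbb{P}[N_u\geq g(u)]\geq(1-o(1))\binom{|U|}{g(u)}q^{g(u)}=(1-o(1))\alpha_u,
\]
where $\alpha_u:=\tfrac{|U|^{g(u)}}{g(u)!}\bigl(\binom{n}{k-2}p\bigr)^{g(u)}$. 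This is exactly the required marginal inequality $\mathbb{P}[Z_u=1]\leq\mathbb{P}[\widehat{Z}_u=1]$.

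For the joint bound I would exploit the product structure of the randomness across $U$. For each $v\in U$ let $D_v\subseteq V\setminus(U\cup V')$ denote the random set of neighbours of $v$ via $E'$; since distinct $D_v$'s depend on disjoint batches of potential hyperedges (those containing $v$, exactly one vertex of $U$, and no vertex of $V'$), the family $(D_v)_{v\in U}$ is mutually independent and $N_u=|\{v\in U:u\in D_v\}|$. Expanding
\[
\prod_{u\in I}Z_u=\prod_{u\in I}\bigl(1-\mathbbm{1}[N_u\geq g(u)]\bigr)
\]
by inclusion--exclusion and comparing with the analogous expansion of $\prod_{u\in I}(1-(1-o(1))\alpha_u)$ reduces the problem to controlling, for each $J\subseteq I$, the joint moment $\mathbb{E}\bigl[\prod_{u\in J}\mathbbm{1}[N_u\geq g(u)]\bigr]$ by $\prod_{u\in J}(1+o(1))\alpha_u$; the plan is to obtain such a bound by invoking the independence of the $D_v$'s across $v$ and then enumerating the possible intersection patterns of each individual $D_v$ with $J$.

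The main obstacle is that the events $\{Z_u=1\}$ for different $u\in I$ are all decreasing in the random hyperedges and hence \emph{positively} correlated, so the FKG inequality produces only a lower bound on the joint probability---the opposite of what is needed here. All the room for the argument must therefore come from the $(1-o(1))$ factor in the definition of $\widehat{Z}_u$, and extracting it relies crucially on the hypothesis $|\{u:g(u)=1\}|=o(n)$. Indeed, for $g(u)\geq 2$ one has $\alpha_u=O\bigl((|U|\binom{n}{k-2}p)^{g(u)}\bigr)$, which is of strictly lower order, so the associated correlation corrections are small enough to be absorbed into the slack; the sharp case is $g(u)=1$, and it is precisely the smallness of the set of such vertices that limits the total correlation damage and allows the bound to close.
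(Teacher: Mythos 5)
Your marginal computation is fine, but it only reproduces the unconditional bound $\mathbb{P}[Z_u=1]\le 1-(1-o(1))\alpha_u$, and the entire content of the lemma is the joint inequality. The route you propose for the joint bound does not close. Expanding $\prod_{u\in I}Z_u$ by inclusion--exclusion gives $\sum_{J\subseteq I}(-1)^{|J|}\mathbb{E}\bigl[\prod_{u\in J}\mathbbm{1}[N_u\ge g(u)]\bigr]$, an alternating sum over up to $2^{|I|}$ subsets (and $|I|$ may be of order $n$). A term-by-term comparison with $\prod_{u\in I}(1-(1-o(1))\alpha_u)$ would require, for even $|J|$, an upper bound $\mathbb{E}[\prod_{J}\mathbbm{1}]\le\prod_J(1-o(1))\alpha_u$ and, for odd $|J|$, the reverse inequality; your plan only supplies upper bounds of the form $\prod_J(1+o(1))\alpha_u$, which is the wrong direction for the even terms (the events $\{N_u\ge g(u)\}$ are increasing, hence positively correlated, so their joint moments genuinely exceed the product of marginals). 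More fundamentally, multiplicative $(1\pm o(1))$ control of individual terms cannot control an alternating sum with massive cancellation over exponentially many subsets, so "comparing the two expansions" is not a valid reduction; you have correctly diagnosed that FKG points the wrong way, but you have not supplied a mechanism that actually extracts the needed slack.

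The paper's proof uses a different and essential idea that is absent from your proposal: it reduces the product inequality to a \emph{conditional} marginal bound, namely that for every $I$ and $u\notin I$, $\mathbb{P}[Z_u=0\mid\bigwedge_{i\in I}Z_i=1]\ge(1-o(1))\frac{|U|^{g(u)}}{g(u)!}\bigl(\binom{n}{k-2}p\bigr)^{g(u)}$, which telescopes to the claim. To prove this, one sums over a family $\mathcal{F}_u$ of disjoint witness configurations of $g(u)$ many $k$-sets through $u$, each containing exactly one vertex of $U$, no vertex of $V'$, and crucially \emph{no vertex $w$ with $g(w)=1$}: if a witness hyperedge met such a $w\in I$ its presence would contradict $Z_w=1$, whereas for $g(w)\ge 2$ one can additionally impose that $w$ lies in no other hyperedge of $E'$, keeping the conditioning event consistent; FKG is then applied to the remaining decreasing events to factor out the conditioning. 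This is where the hypothesis $|\{u:g(u)=1\}|=o(n)$ enters (it guarantees $(1-o(1))$ of the witness configurations survive the exclusion), not in the diffuse "limits the correlation damage" sense you describe. Without this conditional argument, or a genuine substitute for it, your proof has a real gap at its central step.
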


\begin{proof}
The statement follows if for arbitrary $I\subset W$ and $u\in W\backslash I$ we have
$$\mathbb{P}\left[Z_u=1|\bigwedge_{i\in I} Z_i=1\right]\leq \mathbb{P}\left[\widehat{Z}_u=1|\bigwedge_{i\in I} \widehat{Z}_i=1\right]=\mathbb{P}[\widehat{Z}_u=1]= 1-(1-o(1))\frac{|U|^{g(u)}}{g(u)!}\left(\binom{n}{k-2}p\right)^{g(u)}$$
or equivalently
$$\mathbb{P}\left[Z_u=0|\bigwedge_{i\in I} Z_i=1\right]\geq (1-o(1))\frac{|U|^{g(u)}}{g(u)!}\left(\binom{n}{k-2}p\right)^{g(u)}.$$

Fix $I\subset W$ and $u\in W\backslash I$. Denote by $\Phi_u$ the set of $k$-sets satisfying the following conditions:
\begin{itemize}
\item the $k$-set contains $u$;
\item the $k$-set contains exactly one vertex in $U$;
\item the $k$-set contains no vertex in $V'$;
\item the $k$-set contains no vertex in the set $\{w:g(w)=1\}$.
\end{itemize}
Let $\mathcal{F}_u$ 
be a family of $g(u)$ sized subsets of $\Phi_u$ such that for every $\coll \in \mathcal{F}_u$ we have that any pair of $k$-sets in $\coll$ are vertex disjoint except for $u$. 
For two families of $k$-sets $\coll$ and $\Phi$, we introduce the notation $\mathcal{E}_{\coll,\Phi}$ for the event $\coll\cap \Phi \subseteq E$ and $\mathcal{D}_{\coll,\Phi}$ for the event $(\Phi\setminus \coll)\cap E=\emptyset$.
In particular, for $\coll \in \mathcal{F}_u$ we have $\mathcal{E}_{\coll,\Phi_u}$ is the event $\{\coll \subseteq E\}$ and $\mathcal{D}_{\coll,\Phi_u}$ is the event $\{(\Phi_u\backslash \coll) \cap E=\emptyset\}$.
Thus
\begin{equation}\label{eq:disjsum}
\mathbb{P}\left[Z_u=0|\bigwedge_{i\in I} Z_i=1\right]\geq \sum_{\coll\in \mathcal{F}_u} \mathbb{P}\left[\mathcal{E}_{\coll,\Phi_u},\mathcal{D}_{\coll,\Phi_u}|\bigwedge_{i\in I} Z_i=1\right].
\end{equation}

Let  $\Phi_{u,I}\subseteq \Phi_u$ be the set of $k$-sets which contain at least one vertex in $I$. 
Since the appearance of any $k$-set not containing a vertex in $I$ is independent of the event $\{\bigwedge_{i\in I} Z_i=1\}$, we have for any $\coll \in \mathcal{F}_u$
\begin{equation}\label{eq:lowerZ1}
\mathbb{P}\left[\mathcal{E}_{\coll,\Phi_u},\mathcal{D}_{\coll,\Phi_u},\bigwedge_{i\in I} Z_i=1\right]
= p^{|\coll \backslash \Phi_{u,I}|}(1-p)^{|\Phi_u\backslash \Phi_{u,I}|} \mathbb{P}\left[\mathcal{E}_{\coll,\Phi_{u,I}}, \mathcal{D}_{\coll,\Phi_{u,I}}, \bigwedge_{i\in I} Z_i=1\right].
\end{equation}

For $\coll \in \mathcal{F}_u$ define $U_\coll \subseteq I$ as the set of vertices in $I$ which are contained in a $k$-set in $\coll$. 
Recall that every vertex $v\in U_\coll$ is contained in exactly one $k$-set in $\coll$ and $g(v)\geq 2$. Therefore, conditional on $\mathcal{E}_{\coll,\Phi_u}$, if $v$ is not contained in 
any hyperedge within $E'$ in addition to the hyperedge within $\coll$, then $Z_v=1$. Denote the event that none of the hyperedges in $E'\setminus\mathcal{C}$ contain $v$ by $\mathcal{I}_{\coll,v}$. Then
\begin{equation}\label{eq:lowerZ2}
\mathbb{P}\left[\mathcal{E}_{\coll,\Phi_{u,I}}, \mathcal{D}_{\coll,\Phi_{u,I}}, \bigwedge_{i\in I} Z_i=1\right]
\ge \mathbb{P}\left[\mathcal{E}_{\coll,\Phi_{u,I}}, \mathcal{D}_{\coll,\Phi_{u,I}}, \bigwedge_{v\in U_\coll} \mathcal{I}_{\coll,v}, \bigwedge_{i\in I\backslash U_\coll} Z_i=1\right].
\end{equation}
Out of the four events on the right-hand side of \eqref{eq:lowerZ2} only the first one depends on the hyperedges in $\coll\cap\Phi_{u,I}$ and thus 
\begin{equation}\label{eq:lowerZ3}
\mathbb{P}\left[\mathcal{E}_{\coll,\Phi_{u,I}}, \mathcal{D}_{\coll,\Phi_{u,I}}, \bigwedge_{v\in U_\coll} \mathcal{I}_{\coll,v}, \bigwedge_{i\in I\backslash U_\coll} Z_i=1\right]
=p^{|\coll\cap\Phi_{u,I}|}\mathbb{P}\left[\mathcal{D}_{\coll,\Phi_{u,I}}, \bigwedge_{v\in U_\coll} \mathcal{I}_{\coll,v}, \bigwedge_{i\in I\backslash U_\coll} Z_i=1\right].
\end{equation}
Note that each of the events $\mathcal{D}_{\coll,\Phi_{u,I}}, \bigwedge_{i\in I\backslash U_\coll} Z_i=1$ and $\mathcal{I}_{\coll,v}$ for $v\in U_\coll$ is decreasing, therefore the FKG inequality (Theorem~\ref{thm:FKG}) implies
\begin{equation}\label{eq:lowerZ4}
\mathbb{P}\left[\mathcal{D}_{\coll,\Phi_{u,I}}, \bigwedge_{v\in U_\coll} \mathcal{I}_{\coll,v}, \bigwedge_{i\in I\backslash U_\coll} Z_i=1\right]\geq (1-p)^{|\Phi_{u,I}|} \mathbb{P}\left[\bigwedge_{i\in I\backslash U_\coll} Z_i=1\right]\prod_{v\in U_\coll}\mathbb{P}[\mathcal{I}_{\coll,v}].
\end{equation}
Putting \eqref{eq:lowerZ1}-\eqref{eq:lowerZ4} together we have
\begin{equation}\label{eq:lowerZfinal}
\mathbb{P}\left[\mathcal{E}_{\coll,\Phi_u},\mathcal{D}_{\coll,\Phi_u},\bigwedge_{i\in I} Z_i=1\right]\ge p^{g(u)} (1-p)^{|\Phi_u|} \mathbb{P}\left[\bigwedge_{i\in I\backslash U_\coll} Z_i=1\right]\prod_{v\in U_\coll}\mathbb{P}[\mathcal{I}_{\coll,v}].
\end{equation}

Recall that $|U|=o((n^{k-2}p)^{-1})$ thus
\begin{align}
(1-p)^{|U|\binom{n}{k-2}}&\geq \exp\left(-|U|\binom{n}{k-2}\frac{p}{1-p}\right)\geq \exp(-o(1))=1-o(1).\label{eq:noedge}
\end{align}
As the number of $k$-sets containing $v$ and a vertex in $U$ is at most $|U|\binom{n}{k-2}$, by \eqref{eq:noedge} for every $v\in U_\coll$ we have $\mathbb{P}[\mathcal{I}_{\coll,v}]=1-o(1)$. In addition we have $|U_\coll|\le kg(u)=O(1)$, implying $\prod_{v\in U_\coll}\mathbb{P}[\mathcal{I}_{\coll,v}]=1-o(1)$. Together with $|\Phi_u|\leq |U|\binom{n}{k-2}$ and \eqref{eq:lowerZfinal} this implies
\begin{align*}
\mathbb{P}\left[\mathcal{E}_{\coll,\Phi_u},\mathcal{D}_{\coll,\Phi_u}|\bigwedge_{i\in I} Z_i=1\right]&=\frac{\mathbb{P}[\mathcal{E}_{\coll,\Phi_u},\mathcal{D}_{\coll,\Phi_u},\bigwedge_{i\in I} Z_i=1]}{\mathbb{P}[\bigwedge_{i\in I} Z_i=1]}\\
&\geq (1-o(1))p^{g(u)} (1-p)^{|U|\binom{n}{k-2}} \frac{\mathbb{P}[\bigwedge_{i\in I\backslash U_\coll} Z_i=1]}{\mathbb{P}[\bigwedge_{i\in I\backslash U_\coll} Z_i=1]}\\
&\stackrel{\eqref{eq:noedge}}{\geq} (1-o(1))p^{g(u)}.
\end{align*}

Therefore in order to complete our estimate for the right hand side of \eqref{eq:disjsum} we only need to establish $|\mathcal{F}_u|$. 
Recall that $\mathcal{F}_u$
is the family of $g(u)$ sized subsets of $\Phi_u$ such that for every $\coll \in \mathcal{F}_u$ we have that for any pair of $k$-sets in $\coll$ are vertex disjoint except for $u$.

Since the size of the sets $U,V'$ and $\{w:g(w)=1\}$ is $o(n)$, we have
$$|\Phi_u|\geq |U|\binom{(1-o(1))n}{k-2}.$$
This in turn implies
$$
\binom{|\Phi_u|}{g(u)}\ge(1- o(1))\frac{|U|^{g(u)}}{g(u)!}\binom{n}{k-2}^{g(u)},
$$
as $\binom{(1- o(1))n}{k-2}=(1- o(1))\binom{n}{k-2}$.

Since the number of sets in $\binom{\Phi_u}{g(u)}$ where the two $k$-sets overlap in a vertex other than $u$ is
$$O\left(\left.|U|^{g(u)}\binom{n}{k-2}^{g(u)}\right/|U|\right)=o\left(\binom{|\Phi_u|}{g(u)}\right),$$
we have
$$|\mathcal{F}_u|=(1- o(1))\frac{|U|^{g(u)}}{g(u)!}\binom{n}{k-2}^{g(u)}$$
and the statement follows.

\end{proof}

The previous lemma will be used to establish a lower bound on $\uinfec_i(t)$ when $2\leq i \le r$, and also for $\uinfec(t)$ when $r\ge 3$. However, as mentioned earlier, a different approach is needed for $\uinfec_1(t)$ and $\uinfec(t)$ when $r=2$. Roughly speaking, $\uinfec_1(t)$ increases in every step with the vertices contained in the hyperedges of $\infedge'(t+1)$, and when $r=2$, $\uinfec(t)$ increases with vertices in the closed neighbourhoods of $\uinfec_2(t+1)\setminus \uinfec_2(t)$. By considering how hyperedges intersect, the following lemma ensures that a typical hyperedge contributes $k-1$ vertices to $\uinfec_1(t)$ and an element in $\uinfec_2(t+1)\setminus \uinfec_2(t)$ typically contributes $2k-3$ vertices to $\uinfec(t)$, when $r=2$. 

For a positive integer $\ell\ge 2$ a set of hyperedges $e_1,\ldots,e_\ell$ is called an {\em intersecting $\ell$-tuple with respect to $U$}, if for every $1\le i < \ell$, $e_i$ intersects $e_{i+1}$ in a vertex in $V\setminus U$. If two hyperedges in an intersecting $\ell$-tuple intersect in more than one vertex, which may include a vertex in $U$, we call it a {\em  strongly-intersecting $\ell$-tuple with respect to $U$}. 

\begin{lemma}\label{lem:fewint}
	Consider $H_k(n,p)$ with vertex set $V=[n]$ and hyperedge set $E$. Let $U,V'\subseteq V$ be two disjoint subsets with $|V'|=o(n)$ while  $U$ satisfies both $|U|=\Omega(a_c)$ and $|U|=o((n^{k-2}p)^{-1})$. Denote by $E'\subseteq E$ the set of hyperedges which contain exactly one vertex in $U$ and no vertices in $V'$. 	
 Assume that we are given a set of hyperedges $F$, each containing at least one vertex in $V'$ and are vertex disjoint in $V\setminus (U\cup V')$. Furthermore assume that $|F|=O(|U|n^{k-1}p)$.
	
Then for any $\ell\le r$ with probability $1-o(\exp(- |U|)$ we have that in the hypergraph spanned by $E'\cup F$ the number of intersecting $(\ell+1)$-tuples and the number of strongly-intersecting $\ell$-tuples with respect to $U\cup V'$ is
	$o(|U|^{\ell} n^{\ell(k-2)+1}p^{\ell}).$
\end{lemma}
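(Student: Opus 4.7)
I will bound the counts in two steps: first via a first-moment calculation showing the expected count is $o(t)$, where $t := |U|^\ell n^{\ell(k-2)+1} p^\ell$, then via polynomial concentration to promote this to failure probability $o(\exp(-|U|))$.

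For the expected count of intersecting $(\ell+1)$-tuples with all edges in $E'$: each edge contributes one vertex of $U$ and $k-1$ vertices outside $U \cup V'$, and the $\ell$ consecutive intersections remove $\ell$ free vertex slots. This yields at most $O(|U|^{\ell+1} n^{(\ell+1)(k-2)+1})$ potential tuples, each present with probability $p^{\ell+1}$, so the expected count is $O(|U| n^{k-2} p) \cdot t = o(t)$ by the hypothesis $|U| = o((n^{k-2}p)^{-1})$. For tuples using some edges from $F$: since $F$-edges are vertex-disjoint in $V \setminus (U \cup V')$, two of them cannot be consecutive in an intersecting tuple; using $|F| = O(|U| n^{k-1} p)$, each $F$-edge contributes the same to the count as an $E'$-edge would, preserving the $o(t)$ bound. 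For strongly-intersecting $\ell$-tuples the extra intersection eliminates one further degree of freedom, so the expected count is $o(t/n)$.

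For concentration, the count $X$ is a polynomial of degree at most $\ell+1$ in the independent Bernoulli hyperedge indicators of $H_k(n,p)$. To promote the first-moment bound to failure probability $o(\exp(-|U|))$ I would apply a polynomial concentration inequality---Janson's upper tail bound or (more flexibly) the Kim--Vu inequality---together with estimates of partial-derivative expectations; the maximum such expectation, corresponding to the count of tuples through a fixed edge, is itself of order $t$. The main obstacle is this concentration step: Markov's inequality alone gives only failure probability $|U| n^{k-2} p = o(1)$, and substantial pairwise correlations between overlapping tuples prevent the naive Janson bound from reaching $\exp(-|U|)$. A careful exploitation of the polynomial structure---via high-moment estimates or Kim--Vu-type bounds at degree $\ell+1$---together with a union bound over the two types of counts constitutes the technical core.
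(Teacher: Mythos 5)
Your first-moment computation is essentially right (the expected number of intersecting $(\ell+1)$-tuples is smaller than the target $t:=|U|^{\ell}n^{\ell(k-2)+1}p^{\ell}$ by a factor $|U|n^{k-2}p=o(1)$, and similarly for strongly-intersecting $\ell$-tuples and for tuples using $F$), but the proof has a genuine gap exactly where you flag it: the concentration step is not carried out, and the tools you name will not deliver the required failure probability $o(\exp(-|U|))$. The tuple count is a polynomial of degree $\ell+1$ in the edge indicators whose top-order derivative quantity is $1$ (fix all $\ell+1$ edges), so a Kim--Vu-type bound at confidence level $\exp(-\lambda)$ with $\lambda\gtrsim |U|$ forces an allowed deviation of order at least $\lambda^{\ell+1}\gtrsim |U|^{\ell+1}$, which can vastly exceed $t$ in the permitted range of $|U|$ (e.g.\ already for $\ell=1$ one would need $|U|^{4}=o(n)$, while $|U|$ may be nearly $(n^{k-2}p)^{-1}$); Janson-type bounds face the usual upper-tail obstruction. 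Also, your claim that the maximal partial-derivative expectation ``the count of tuples through a fixed edge'' is of order $t$ is off: it is of order $(|U|n^{k-2}p)^{\ell}\approx t/n$. So as written the technical core of the lemma is missing, not merely deferred.

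The paper proves the lemma by a different route that avoids concentrating the polynomial count altogether. It runs a layered exploration of the hyperedges that could participate in such tuples: a first round queries, in sequence, the candidate tuple configurations (discarding any configuration reusing an already successfully exposed $k$-set), producing a set $\mathcal{Y}_0$ of exposed hyperedges, and then iteratively $\mathcal{Y}_i$ collects the hyperedges of $E'\cup F$ meeting $\mathcal{Y}_{i-1}$ outside $U\cup V'$; every intersecting $(\ell+1)$-tuple or strongly-intersecting $\ell$-tuple must draw its hyperedges from $\mathcal{Y}_0,\dots,\mathcal{Y}_{\ell}$. Thanks to the FKG-based domination of sequential queries (Lemma~\ref{lem:indepbound}), each $|\mathcal{Y}_i\cap E'|$ is stochastically dominated by a sum of independent Bernoulli variables (a binomial), so the one-variable Chernoff-type bound (Theorem~\ref{thm:indconc}) applies directly at deviation scale $t$; since $|U|^{\ell-1}n^{\ell(k-2)+1}p^{\ell}=\Omega(1)$ (the computation in \eqref{eq:Ubound}), this scale is $\Omega(|U|)$ and each layer fails with probability $o(\exp(-|U|))$, while $|\mathcal{Y}_i\cap F|\le k|\mathcal{Y}_{i-1}|$ deterministically because the $F$-edges are vertex-disjoint off $U\cup V'$. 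A union bound over the constantly many layers finishes the proof. If you want to salvage your approach, you would need to replace the black-box polynomial inequalities with such an exploration/stochastic-domination argument (or an equivalent ad hoc moment argument), since the required tail is far beyond what degree-$(\ell+1)$ concentration inequalities give here.
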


\begin{proof}
Consider the following process to establish the set of intersecting $(\ell+1)$-tuples and the strongly-intersecting $\ell$-tuples. 
Denote by $\Phi_U$ the set of $k$-sets containing exactly one vertex in $U$, and no vertices in $V'$. Next we create the family $\mathcal{F}_0$ consisting of subsets of $\Phi_U\cup F$, which form an intersecting $(\ell+1)$-tuple or a strongly-intersecting $\ell$-tuple.
In the following we will provide an algorithm which finds every intersecting $(\ell+1)$-tuple and strongly-intersecting $\ell$-tuple reminiscent of the query-process in Section~\ref{sec:query-process}.
Consider the elements $\coll\in\mathcal{F}_0$ one by one, but we do not query every element we consider. An element is queried only if $\coll\cap \Phi_U$ does not contain a hyperedge, which can be found in a previously successfully queried element. In addition the element $\coll\in \mathcal{F}_0$ is successfully queried if it was queried and $\coll \subseteq E' \cup F$. The first stage of the process is complete, once every element in $\mathcal{F}_0$ has been considered. Let $\mathcal{Y}_0$ be the set of hyperedges, which are found in a successfully queried element.

For $i\ge 1$ denote by $\mathcal{Y}_i$ the set of hyperedges in $E'\cup F$, which intersect a hyperedge from $\mathcal{Y}_{i-1}$ in $V\setminus (U\cup V')$. Note that any intersecting $(\ell+1)$-tuple and strongly-intersecting $\ell$-tuple must contain at least one hyperedge in $\mathcal{Y}_0$ and any remaining hyperedges in $\mathcal{Y}_1,\ldots,\mathcal{Y}_i$ for some $i\le \ell$. 
Denote by $\mathcal{F}_i\subseteq \Phi_U\setminus \left(\bigcup_{j=0}^{i-1}\mathcal{Y}_{j}\right)$ the set of $k$-sets, which intersect a hyperedge in $\mathcal{Y}_{i-1}$ in a vertex in $V\setminus (U \cup V')$. 
For $1\le i \le \ell$ repeatedly query every element of $\mathcal{F}_i$ to obtain $\mathcal{Y}_i$.

We start by obtaining an upper bound on $\mathbb{E}[|\mathcal{Y}_0|]$. Lemma~\ref{lem:indepbound} implies that $|\mathcal{Y}_0|$ is stochastically dominated by independent Bernoulli random variables, and denote the sum of these random variables as $Y_0$. Now both $|\Phi_U|p=O(|U|n^{k-1}p)$ and $|F|=O(|U|n^{k-1}p)$, which together with $|U|n^{k-2}p=o(1)$ and $|U|=\omega(1)$ implies:
$$\mathbb{E}[Y_0]=o\left(|U|^{\ell}n^{\ell(k-2)+1}p^{\ell}\right).$$
Together with the concentration result in Theorem~\ref{thm:indconc} we then have
$$\mathbb{P}\left[Y_0=o\left(|U|^{\ell}n^{\ell(k-2)+1}p^{\ell}\right)\right]=1-\exp\left(-\omega \left(|U|^{\ell}n^{\ell(k-2)+1}p^{\ell}\right)\right).$$
Since $\ell\le r$ we have
\begin{align}
|U|^{\ell-1}n^{\ell(k-2)+1}p^{\ell}&=\Omega\left(\left(\frac{1}{n n^{r(k-2)}p^r}\right)^{(\ell-1)/(r-1)}(n^{k-2}p)^{\ell-1}n^{k-1}p\right)\nonumber\\
&=\Omega\left(\left(\frac{1}{ n^{k-1}p}\right)^{(\ell-1)/(r-1)}n^{k-1}p\right)=\Omega(1), \label{eq:Ubound}
\end{align}
and thus
$$\mathbb{P}\left[|\mathcal{Y}_0|=o\left(|U|^{\ell}n^{\ell(k-2)+1}p^{\ell}\right)\right]\ge\mathbb{P}\left[Y_0=o\left(|U|^{\ell}n^{\ell(k-2)+1}p^{\ell}\right)\right]=1-o\left(\exp\left(-|U|\right)\right).$$

Similarly as before by Lemma~\ref{lem:indepbound} we have $|\mathcal{Y}_i\cap E'|$ is stochastically dominated by the binomial random variable $Y_i=\mathrm{Bin}(|U|(k-1)|\mathcal{Y}_{i-1}|n^{k-2},p)$.
Now conditional on $|\mathcal{Y}_{i-1}|=o\left(|U|^{\ell}n^{\ell(k-2)+1}p^{\ell}\right)$ we have $\mathbb{E}[Y_i]=o\left(|U|^{\ell}n^{\ell(k-2)+1}p^{\ell}\right)$ as $|U|n^{k-2}p=o(1)$. Together with Theorem~\ref{thm:indconc} and \eqref{eq:Ubound} we have
\begin{align*}
&\mathbb{P}\left[|\mathcal{Y}_i\cap E'|=o\left(|U|^{\ell}n^{\ell(k-2)+1}p^{\ell}\right)\mid |\mathcal{Y}_{i-1}|=o\left(|U|^{\ell}n^{\ell(k-2)+1}p^{\ell}\right)\right]\\
&\quad \ge \mathbb{P}\left[Y_i=o\left(|U|^{\ell}n^{\ell(k-2)+1}p^{\ell}\right)\mid |\mathcal{Y}_{i-1}|=o\left(|U|^{\ell}n^{\ell(k-2)+1}p^{\ell}\right)\right]=1-o\left(\exp\left(-|U|\right)\right).
\end{align*}

In addition, as the hyperedges in $F$ are vertex disjoint in $V\setminus (U\cup V')$, we also have $|\mathcal{Y}_i\cap F|\le k|\mathcal{Y}_{i-1}|$. 
Therefore with probability $1-o(\exp(-|U|))$ all the $\mathcal{Y}_i$, for $1\le i \le \ell$ have the right size and the result follows.
\end{proof}

\subsection{Bounding trajectory}
Similarly to the subcritical regime we establish a trajectory for the number of infected vertices and in addition for the number of vertices with at least $i$ active neighbours for $1\le i \le r$. 
Recall that $|\infec(0)|=|\uinfec(0)|=(1+\varepsilon)a_c$ and let $0<\delta<1$ satisfy $(1+\varepsilon)(1-\delta)>1$.

\begin{definition}\label{def:trajlower} 
Let $\eta$ and $a^*$ be as defined in \eqref{eq:eta} and \eqref{eq:scaling}. 
Set $\nuinfecappr_0(t):=n$ for all integers $t\ge 0$ and for $1\le i \le r$ let $\nuinfecappr_i(0):=0$. Finally set $\nuinfecappr(-1):=0$ and $\nuinfecappr(0):=|C(0)|$. 
Then for all integers $t\ge 0$ and $1\le i \le r-1$ we define 
\begin{align*}
\nuinfecappr_i(t+1)&:=\sum_{j=0}^{i-1} \nuinfecappr_{j}(t)\frac{(\nuinfecappr(t)-\nuinfecappr(t-1))^{i-j}}{(i-j)!}\left(\binom{n}{k-2}p\right)^{i-j}+\nuinfecappr_i(t);\\
\nuinfecappr_r(t+1)&:=(1-\delta) \sum_{j=0}^{r-1} \nuinfecappr_{j}(t)\frac{(\nuinfecappr(t)-\nuinfecappr(t-1))^{r-j}}{(r-j)!}\left(\binom{n}{k-2}p\right)^{r-j}+\nuinfecappr_r(t);\\
\nuinfecappr(t+1)&:=\eta\nuinfecappr_r(t+1)+\nuinfecappr(0).
\end{align*}
Finally let $\nuinfecrat(t):=\nuinfecappr(t)/a^*$ for all integers $t\ge -1$ and call $\nuinfecrat$ the trajectory associated with mild $r$-bootstrap percolation.
\end{definition} 

We also provide a simpler form for these values.

\begin{claim}\label{clm:traj}
For $0\le i<r$ and $t\ge -1$ we have
$$\nuinfecappr_i(t+1)=\frac{\nuinfecappr(t)^i}{i!}n\left(\binom{n}{k-2}p\right)^i;$$
$$\nuinfecappr_r(t+1)=(1-\delta)\frac{\nuinfecappr(t)^r}{r!}n\left(\binom{n}{k-2}p\right)^r;$$
and
$$\nuinfecappr(t+1)=(1-\delta)\eta\frac{\nuinfecappr(t)^r}{r!}n\left(\binom{n}{k-2}p\right)^r+\nuinfecappr(0).$$
\end{claim}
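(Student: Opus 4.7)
The plan is to prove all three identities simultaneously by induction on $t$, the key algebraic step being a binomial-theorem expansion of $\nuinfecappr(t)^i = (\nuinfecappr(t-1) + (\nuinfecappr(t)-\nuinfecappr(t-1)))^i$. For the $\nuinfecappr_i(t+1)$ formula with $0 \le i < r$, the proof is the cleanest and I will handle it first; the $\nuinfecappr_r(t+1)$ case requires the additional observation that the extra additive $\nuinfecappr_r(t)$ term in the recursion telescopes with the $\nuinfecappr(t-1)^r$ contribution from the binomial expansion; and the $\nuinfecappr(t+1)$ formula then follows immediately from the definition $\nuinfecappr(t+1) = \eta \nuinfecappr_r(t+1) + \nuinfecappr(0)$.

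For the induction base $t=0$, using $\nuinfecappr_0(0) = n$, $\nuinfecappr_j(0)=0$ for $j\ge 1$, and $\nuinfecappr(-1)=0$, only the $j=0$ summand survives in the recursion, giving directly $\nuinfecappr_i(1) = \tfrac{\nuinfecappr(0)^i}{i!} n (\binom{n}{k-2}p)^i$ for $1 \le i < r$ and similarly for $\nuinfecappr_r(1)$. (The $i=0$ case is trivial since $\nuinfecappr_0 \equiv n$.) For the inductive step, assume the stated closed forms hold up to time $t$. Substituting $\nuinfecappr_j(t) = \tfrac{\nuinfecappr(t-1)^j}{j!} n (\binom{n}{k-2}p)^j$ into the recursion for $\nuinfecappr_i(t+1)$ and pulling out the common factor $n(\binom{n}{k-2}p)^i / i!$, the sum becomes
\begin{equation*}
\frac{n(\binom{n}{k-2}p)^i}{i!} \sum_{j=0}^{i-1} \binom{i}{j} \nuinfecappr(t-1)^j (\nuinfecappr(t)-\nuinfecappr(t-1))^{i-j} + \frac{\nuinfecappr(t-1)^i}{i!} n \Bigl(\binom{n}{k-2}p\Bigr)^i,
\end{equation*}
and the additive $\nuinfecappr_i(t) = \tfrac{\nuinfecappr(t-1)^i}{i!} n (\binom{n}{k-2}p)^i$ is exactly the $j=i$ term missing from the binomial sum; by the binomial theorem the total collapses to $\tfrac{\nuinfecappr(t)^i}{i!} n (\binom{n}{k-2}p)^i$, as desired.

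For $\nuinfecappr_r(t+1)$ the same substitution and binomial identity give
\begin{equation*}
(1-\delta) \frac{n(\binom{n}{k-2}p)^r}{r!}\bigl[\nuinfecappr(t)^r - \nuinfecappr(t-1)^r\bigr] + \nuinfecappr_r(t),
\end{equation*}
and applying the inductive hypothesis $\nuinfecappr_r(t) = (1-\delta)\tfrac{\nuinfecappr(t-1)^r}{r!} n (\binom{n}{k-2}p)^r$ produces a telescoping cancellation yielding precisely $(1-\delta)\tfrac{\nuinfecappr(t)^r}{r!} n (\binom{n}{k-2}p)^r$. Finally, the third formula is immediate from $\nuinfecappr(t+1) = \eta \nuinfecappr_r(t+1) + \nuinfecappr(0)$.

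There is no real obstacle here; the only mild subtlety is the bookkeeping that makes the binomial sum close up, i.e.\ recognizing that the carried-over term $\nuinfecappr_i(t)$ (respectively $\nuinfecappr_r(t)$) supplies the missing $j=i$ (respectively $j=r$) term of the binomial expansion of $(\nuinfecappr(t-1) + (\nuinfecappr(t)-\nuinfecappr(t-1)))^i$. Once that is in place the induction is essentially a one-line computation. Worth noting: the $i=0$ formula is consistent with the definition $\nuinfecappr_0(t) = n$ for all $t \ge 0$, so the statement of the claim can be read uniformly for $0 \le i < r$.
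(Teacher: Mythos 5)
Your proof is correct and follows essentially the same route as the paper: induction on $t$, substituting the inductive formulas for $\nuinfecappr_j(t)$ into the defining recursion and recognising the carried-over term $\nuinfecappr_i(t)$ (resp.\ $\nuinfecappr_r(t)$, with its $(1-\delta)$ factor) as the missing top term of the binomial expansion of $\bigl(\nuinfecappr(t-1)+(\nuinfecappr(t)-\nuinfecappr(t-1))\bigr)^i$, with the third identity immediate from the definition. The only cosmetic difference is that the paper anchors the induction at $t=-1$ (where the formulas are immediate from $\nuinfecappr(-1)=0$) while you start at $t=0$; the $t=-1$ instance of the claim is trivially definitional, so nothing is lost.
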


\begin{proof}
The proof is by induction. Note that the statement holds when $t=-1$ and when $i=0$. Now assume the statement holds for $t$. If $1\le i<r$, then the induction hypothesis for the individual $\nuinfecappr_{j}(t)$ when $j\le i$ leads to 
\begin{align*}
\nuinfecappr_i(t+1)
&=\sum_{j=0}^{i-1} \nuinfecappr_{j}(t)\frac{(\nuinfecappr(t)-\nuinfecappr(t-1))^{i-j}}{(i-j)!}\left(\binom{n}{k-2}p\right)^{i-j}+\nuinfecappr_i(t)\\
&=\sum_{j=0}^{i-1} \frac{\nuinfecappr(t-1)^j}{j!}n\left(\binom{n}{k-2}p\right)^j
\frac{(\nuinfecappr(t)-\nuinfecappr(t-1))^{i-j}}{(i-j)!}\left(\binom{n}{k-2}p\right)^{i-j}+\frac{\nuinfecappr(t-1)^i}{i!}n\left(\binom{n}{k-2}p\right)^i\\
&=n\left(\binom{n}{k-2}p\right)^i \left(\frac{\nuinfecappr(t-1)^i}{i!}+\sum_{j=0}^{i-1} \frac{\nuinfecappr(t-1)^j}{j!}\frac{(\nuinfecappr(t)-\nuinfecappr(t-1))^{i-j}}{(i-j)!}\right).
\end{align*}
For the first statement it only remains to show
$$\nuinfecappr(t-1)^i+i!\sum_{j=0}^{i-1} \frac{\nuinfecappr(t-1)^j}{j!}\frac{(\nuinfecappr(t)-\nuinfecappr(t-1))^{i-j}}{(i-j)!}=\nuinfecappr(t)^i,$$
which follows from the binomial theorem as the left hand side is equivalent to the expansion of $((\nuinfecappr(t)-\nuinfecappr(t-1))+\nuinfecappr(t-1))^i$.
The proof for $\nuinfecappr_r(t)$ is analogous, from which $\nuinfecappr(t)$ follows immediately.
\end{proof}

By an argument analogous to the proof of Claim~\ref{claim:trajectoryRecursion}, that is, replacing $1+\delta$ and $\nlinfecrat$ with $1-\delta$ and $\nuinfecrat$ respectively, we have
\begin{equation}\label{eq:nuratio}
	\nuinfecrat(t+1)=(1-\delta)\frac{\nuinfecrat(t)^r}{r}+\nuinfecrat(0).
\end{equation}

We expect that $\gamma(t)$ does not have a maximum, or equivalently $\nuinfecrat(t+1)-\nuinfecrat(t)$ is bounded from below. 

From \eqref{eq:nuratio}, we obtain the lower bound
\begin{align*}
 \nuinfecrat(t+1)-\nuinfecrat(t)&=(1-\delta)\frac{\nuinfecrat(t)^{r}-\nuinfecrat(t-1)^{r}}{r}\\
 &=(1-\delta)(\nuinfecrat(t)-\nuinfecrat(t-1))\sum_{j=0}^{r-1}\frac{\nuinfecrat(t)^{j}\nuinfecrat(t-1)^{r-j-1}}{r}\\
&\geq (1-\delta)(\nuinfecrat(t)-\nuinfecrat(t-1))\nuinfecrat(t-1)^{r-1}.
\end{align*}
This implies that if there exists a $t$ such that $\nuinfecrat(t-1)^{r-1}>(1-\delta)^{-1}$ or equivalently $\nuinfecappr(t-1)>a^*(1-\delta)^{-1/(r-1)}$, then the sequence $\nuinfecappr(t)$ converges to infinity. In fact, if $\nuinfecrat(t-1)^{r-1}\geq (1+\nu)(1-\delta)^{-1}$ for some fixed $\nu>0$, then $\nuinfecappr(\tau+1)-\nuinfecappr(\tau)$ increases by at least a factor of $1+\nu$ in every step $\tau\ge t$.

In order to prove our result we require not only that $\nuinfecrat(t)$ does not have a maximum, but also that the one step change in each step is sufficiently large to show appropriate tail bounds on a step by step basis.
 
\begin{lemma}\label{lem:mininf}
There exists $\Delta>0$ independent of $n$ such that for every $t\geq 0$ we have $\nuinfecrat(t+1)-\nuinfecrat(t)>\Delta$.
\end{lemma}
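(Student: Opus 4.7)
The plan is to recast the one-step difference via the recursion \eqref{eq:nuratio} and reduce the claim to analyzing a single real-valued function. Setting
$$f(x) := (1-\delta)\frac{x^r}{r} - x + \nuinfecrat(0),$$
the recursion \eqref{eq:nuratio} rewrites as $\nuinfecrat(t+1) - \nuinfecrat(t) = f(\nuinfecrat(t))$. Since $\nuinfecrat(t)\ge 0$ for every $t$, it suffices to exhibit a constant $\Delta>0$ (independent of $n$) with $f(x)\ge \Delta$ for all $x\ge 0$.

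First I would examine $f$ on $[0,\infty)$ via elementary calculus. The derivative $f'(x) = (1-\delta)x^{r-1}-1$ is negative on $[0,x_*)$ and positive on $(x_*,\infty)$, where $x_* := (1-\delta)^{-1/(r-1)}$, so $f$ attains its global minimum on $[0,\infty)$ at $x_*$. Evaluating gives
$$f(x_*) = \nuinfecrat(0) - \left(1-\frac{1}{r}\right)(1-\delta)^{-1/(r-1)}.$$

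Next I would plug in $\nuinfecrat(0) = (1+\varepsilon)(1-1/r)$, which follows from Definition~\ref{def:trajlower} together with $|\uinfec(0)| = (1+\varepsilon)a_c = (1+\varepsilon)(1-1/r)a^*$ and \eqref{eq:threshold}. This yields
$$f(x_*) = \left(1-\frac{1}{r}\right)\Bigl[(1+\varepsilon) - (1-\delta)^{-1/(r-1)}\Bigr],$$
so the whole claim reduces to showing that the bracket is strictly positive. For $r=2$ we have $1/(r-1)=1$, and the bracket is $(1+\varepsilon)-(1-\delta)^{-1}$, which is positive precisely by the standing hypothesis $(1+\varepsilon)(1-\delta)>1$. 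For $r\ge 3$ we instead use $0<1/(r-1)<1$ together with $(1-\delta)^{-1}>1$ to conclude $(1-\delta)^{-1/(r-1)} < (1-\delta)^{-1} < 1+\varepsilon$, again by the same hypothesis. Thus $\Delta := f(x_*) > 0$ depends only on $r,\varepsilon,\delta$, as required.

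I do not foresee any significant obstacle: the argument is a direct convexity analysis of a scalar function, with the only conceptual check being that the choice of $\delta$ forces $\nuinfecrat(0) > (1-1/r)(1-\delta)^{-1/(r-1)}$, i.e.\ the critical value of the recursion lies strictly below $\nuinfecrat(0)$. That is exactly what the hypothesis $(1+\varepsilon)(1-\delta)>1$ was designed to encode, mirroring on the supercritical side the bottleneck analysis carried out in Section~\ref{ssec:heuristic}.
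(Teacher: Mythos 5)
Your proposal is correct and follows essentially the same route as the paper: rewrite the one-step difference via \eqref{eq:nuratio}, minimise the scalar function $(1-\delta)x^r/r - x + \nuinfecrat(0)$ over $x\ge 0$ at $x_*=(1-\delta)^{-1/(r-1)}$, plug in $\nuinfecrat(0)=(1+\varepsilon)(1-1/r)$, and conclude positivity from $(1+\varepsilon)(1-\delta)>1$ via $(1-\delta)^{-1/(r-1)}\le(1-\delta)^{-1}$ (the paper bounds this uniformly rather than splitting the cases $r=2$ and $r\ge 3$, but the inequality is identical).
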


\begin{proof}
Using \eqref{eq:nuratio} and that $\nuinfecrat(0)=(1+\varepsilon)a_c/a^*=(1+\varepsilon)(1-r^{-1})$ we have
\begin{align*}
\nuinfecrat(t+1)-\nuinfecrat(t)
&\stackrel{\eqref{eq:nuratio}}{=}(1-\delta)\frac{\nuinfecrat(t)^r}{r}+\nuinfecrat(0)-\nuinfecrat(t)\\
&=(1-\delta)\frac{\nuinfecrat(t)^r}{r}-\nuinfecrat(t)+(1+\varepsilon)\left(1-\frac{1}{r}\right).
\end{align*}
For $x\geq 0$ the function $(1-\delta)x^r/r-x$ has its minimum at $x=(1-\delta)^{-1/(r-1)}$, implying 
\begin{align*}
(1-\delta)\frac{\nuinfecrat(t)^r}{r}-\nuinfecrat(t)+(1+\varepsilon)\left(1-\frac{1}{r}\right)
&\geq \left(\frac{1}{1-\delta}\right)^{1/(r-1)}\left(\frac{1}{r}-1\right)+(1+\varepsilon)\left(1-\frac{1}{r}\right)\\
&=\left(1+\varepsilon-\left(\frac{1}{1-\delta}\right)^{1/(r-1)}\right)\left(1-\frac{1}{r}\right)\\
&> \left(1+\varepsilon-\frac{1}{1-\delta}\right)\left(1-\frac{1}{r}\right).
\end{align*}
Recall that $(1+\varepsilon)(1-\delta)>1$ and thus
$$\Delta:=\left(1+\varepsilon-\frac{1}{1-\delta}\right)\left(1-\frac{1}{r}\right)=\left(\frac{(1+\varepsilon)(1-\delta)-1}{1-\delta}\right)\left(1-\frac{1}{r}\right)>0.$$

\end{proof}
Therefore the value of $\nuinfecrat(t)$ increases by at least $\Delta$ in every step. Thus for any $\nu>0$, in a bounded number of steps, depending only on $\varepsilon,\delta$ and $\nu$, we have $\nuinfecrat(t-1)>((1+\nu)/(1-\delta))^{1/(r-1)}$.

In the last claim of this section we show that if $\nuinfecappr(t)-\nuinfecappr(t-1)$ is small then so is $\nuinfecappr(t)$.

\begin{claim}\label{clm:smallc}
Assume that $\nuinfecappr(t)=\nuinfecappr(t-1)+o((n^{k-2}p)^{-1})$.  Then we have
\begin{equation*}
\nuinfecappr(t)=o((n^{k-2}p)^{-1}).
\end{equation*}
\end{claim}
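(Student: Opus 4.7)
The plan is to deduce the claim directly from the trajectory recursion in Claim~\ref{clm:traj}, relying on the observation that the target bound $o((n^{k-2}p)^{-1})$ is in fact weaker than $O(a^*)$. Using \eqref{eq:scaling}, a short calculation yields
$$a^*\cdot n^{k-2}p=\Theta\bigl((n^{k-1}p)^{-1/(r-1)}\bigr)=o(1),$$
since the hypothesis $n^{k-2}p\gg n^{-1}$ gives $n^{k-1}p\to\infty$. Introducing the scaled trajectory $\nuinfecrat(s)=\nuinfecappr(s)/a^*$, Claim~\ref{clm:traj} becomes $\nuinfecrat(s)=g(\nuinfecrat(s-1))$ where $g(y):=\tfrac{1-\delta}{r}y^r+\nuinfecrat(0)$ and $\nuinfecrat(0)=(1+\varepsilon)(1-1/r)$ is a positive constant independent of $n$. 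In particular $\nuinfecappr(t)-\nuinfecappr(t-1)=a^*(g(\nuinfecrat(t-1))-\nuinfecrat(t-1))$.

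The core of the argument is a dichotomy on the size of $\nuinfecrat(t-1)$. Since $r\ge 2$ and $\nuinfecrat(0)>0$, one has $(g(y)-y)/g(y)\to 1$ as $y\to\infty$, so there exists a constant $K=K(\varepsilon,\delta,r)$ with $g(y)-y\ge\tfrac12 g(y)$ for every $y\ge K$. If $\nuinfecrat(t-1)\le K$, then $\nuinfecrat(t)=g(\nuinfecrat(t-1))\le g(K)=O(1)$, which yields $\nuinfecappr(t)=O(a^*)=o((n^{k-2}p)^{-1})$. If instead $\nuinfecrat(t-1)>K$, then $\nuinfecrat(t)-\nuinfecrat(t-1)\ge\tfrac12\nuinfecrat(t)$ and, after multiplying by $a^*$, we get $\nuinfecappr(t)-\nuinfecappr(t-1)\ge\tfrac12\nuinfecappr(t)$; the hypothesis $\nuinfecappr(t)-\nuinfecappr(t-1)=o((n^{k-2}p)^{-1})$ then forces $\nuinfecappr(t)\le 2(\nuinfecappr(t)-\nuinfecappr(t-1))=o((n^{k-2}p)^{-1})$. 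Either way the desired bound holds.

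There is no substantive obstacle here: the proof reduces to the elementary fact that for the polynomial map $g$ with $r\ge 2$ the super-linear term dominates once $y$ exceeds an absolute threshold, so any step of the discrete trajectory either starts from a bounded value (which forces $\nuinfecappr(t)=O(a^*)$) or produces an increment of the same order as $\nuinfecappr(t)$ itself. The only care needed is to choose the threshold $K$ depending on the fixed parameters $\varepsilon,\delta,r$ but not on $n$, and to record the preliminary estimate $a^*=o((n^{k-2}p)^{-1})$, both of which are routine.
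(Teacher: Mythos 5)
Your proof is correct and follows essentially the same route as the paper: both rest on the closed-form recursion of Claim~\ref{clm:traj} (equivalently \eqref{eq:nuratio}) together with a two-case analysis exploiting that the recursion is superlinear at the scale $a^*=o\bigl((n^{k-2}p)^{-1}\bigr)$. The paper organizes the dichotomy as ``either $\nuinfecappr(t)=o((n^{k-2}p)^{-1})$ or $\nuinfecappr(t)=O(\nuinfecappr(t-1))$'' and then extracts $\nuinfecappr(t-1)=O(a^*)$ from the recursion, whereas you split at a constant threshold for $\nuinfecrat(t-1)$; these are equivalent reorganizations of the same argument.
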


\begin{proof}
Using $c(t)\le c(t-1)+o((n^{k-2}p)^{-1})$, either $c(t)=o((n^{k-2}p)^{-1})$ or $c(t)=O(c(t-1))$. 
We only have to show that the statement holds in the latter case. By Claim~\ref{clm:traj} we have
$$\nuinfecappr(t)=\Theta\left(\nuinfecappr(t-1)^r n (n^{k-2}p)^r\right),$$
as $\nuinfecappr(0)=O(\nuinfecappr(\tau)^r n ((n^{k-2}p)^r)$ for any $\tau\ge 0$.	
Together with $\nuinfecappr(t)=O(\nuinfecappr(t-1))$ this implies
$$c(t-1)^r=O\left(c(t-1)n^{-1} (n^{k-2}p)^{-r}\right)$$
or equivalently
$$c(t-1)=O\left((n^{k-2}p)^{-1}(n^{k-1}p)^{-1/(r-1)}\right)=o((n^{k-2}p)^{-1}),$$
and we can deduce $c(t)=o((n^{k-2}p)^{-1})$ in this case as well.
\end{proof}

\subsection{Following the trajectory}

In this section we show that as long as the number of vertices added to each of the classes in the previous steps stays above its trajectory this also holds in the next step.
We start by providing the definition for the size of $\act'(t)$, 
\begin{equation}\label{eq:activesize}
	|\act'(t+1)|=
	\left\{
	\begin{array}{ll}
	\min\{\nuinfecappr(t),|\uinfec(t)|\}-|\act(t)| & \mbox{ if } |\uinfec(t)\setminus \act(t)|<(n^{k-2}p\zeta)^{-1}\\
	(n^{k-2}p\zeta)^{-1} & \mbox{ if } (n^{k-2}p\zeta)^{-1}\le |\uinfec(t)\setminus \act(t)|<(n^{k-2}p)^{-1}\zeta^{1/2}\\
	(n^{k-2}p)^{-1} \zeta^{1/2} & \mbox{ if } (n^{k-2}p)^{-1}\zeta^{1/2}\le |\uinfec(t)\setminus \act(t)|,
	\end{array}
	\right.
\end{equation}
where $\zeta=(n^{k-1}p)^{1/(r+1)}$.

Next we calculate the one step expected increase in the size of the sets $\uinfec_i(t)$.

\begin{claim}\label{clm:expect}
Assume that $\nuinfecappr(t)\le |\uinfec(t)|\le \nuinfecappr(t-1)+(n^{k-2}p)^{-1}/\zeta $ and for every $0\le i< r$ we have $|\uinfec_i(t)|\geq (1-o(1))\nuinfecappr_i(t)$. Then 
$$\mathbb{E}[|\uinfec_r(t+1)\setminus \uinfec_{r}(t)|]\ge (1-o(1))(1-\delta)^{-1}(\nuinfecappr_r(t+1)-\nuinfecappr_r(t)).$$
If in addition $|\uinfec_i(t)|\le \nuinfecappr_i(t+1)$ for $2 \le i < r$, then we have
$$\mathbb{E}[|\uinfec_i(t+1)\setminus \uinfec_i(t)|]\ge (1-o(1))(\nuinfecappr_i(t+1)-\nuinfecappr_i(t)).$$
\end{claim}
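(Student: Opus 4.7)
Given a vertex $v\in V\setminus\uinfec(t)$ and $u\in\act'(t+1)$, let $I_u^v$ indicate that $u$ becomes a new active neighbour of $v$; equivalently, at least one $k$-set $K\ni v,u$ with $K\cap\uinfec(t)=\{u\}$ lies in $E$. Such $k$-sets cannot have been exposed in any previous step, since any prior exposure would have required an active vertex from $K\cap\uinfec(t)=\{u\}$, whereas $u\in\uinfec(t)\setminus\act(t)$ has never been active. Hence the edge statuses of these $k$-sets constitute fresh randomness. Moreover, for distinct $u,u'\in\act'(t+1)$ the relevant $k$-set families are pairwise vertex-disjoint apart from $v$, so the $I_u^v$ are independent. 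Using $|\uinfec(t)|=o(n)$ (from the hypothesis) and $\binom{n}{k-2}p=o(1)$, a direct count yields $\mathbb{P}[I_u^v=1]=(1-o(1))\binom{n}{k-2}p$. The inductive maintenance of the process places us in the first case of \eqref{eq:activesize}, giving $|\act'(t+1)|=\nuinfecappr(t)-\nuinfecappr(t-1)$, so $m_v:=\sum_u I_u^v$ is binomially distributed with total mean $o(1)$.

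Because $|\act'(t+1)|\binom{n}{k-2}p=o(1)$, retaining only the leading term of the binomial gives
\[
\mathbb{P}[m_v\ge\ell]\ge(1-o(1))\frac{(\nuinfecappr(t)-\nuinfecappr(t-1))^{\ell}}{\ell!}\left[\binom{n}{k-2}p\right]^{\ell}.
\]
By the update rule for mild $r$-bootstrap percolation, a vertex $v\in V\setminus(\uinfec_i(t)\cup\uinfec(t))$ with $d_v:=\max\{j:v\in\uinfec_j(t)\}$ joins $\uinfec_i(t+1)$ exactly when $m_v\ge i-d_v$, and $d_v$ is measurable in the past; meanwhile, vertices in $\uinfec(t)$ acquire no new active neighbours. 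Linearity of expectation therefore gives
\[
\mathbb{E}\bigl[|\uinfec_i(t+1)\setminus\uinfec_i(t)|\bigr]=\sum_{j=0}^{i-1}|(\uinfec_j(t)\setminus\uinfec_{j+1}(t))\setminus\uinfec(t)|\,p_j,\qquad p_j:=\mathbb{P}[m_v\ge i-j].
\]

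Since $p_j$ is non-decreasing in $j$, the crude lower bound $|(\uinfec_j(t)\setminus\uinfec_{j+1}(t))\setminus\uinfec(t)|\ge|\uinfec_j(t)|-|\uinfec_{j+1}(t)|-|\uinfec(t)|$ followed by Abel summation rewrites the sum as
\[
|\uinfec_0(t)|\,p_0+\sum_{j=1}^{i-1}|\uinfec_j(t)|(p_j-p_{j-1})-|\uinfec_i(t)|\,p_{i-1}-|\uinfec(t)|\sum_{j=0}^{i-1}p_j.
\]
Substituting the hypothesis $|\uinfec_j(t)|\ge(1-o(1))\nuinfecappr_j(t)$ for $0\le j<i$ (using $\nuinfecappr_0(t)=n$) and reversing the Abel summation on the positive terms recovers $(1-o(1))\sum_{j=0}^{i-1}\nuinfecappr_j(t)\,p_j$; by Claim~\ref{clm:traj} and the lower bound on $p_j$, this equals $(1-o(1))(\nuinfecappr_i(t+1)-\nuinfecappr_i(t))$ when $i<r$, and $(1-o(1))(1-\delta)^{-1}(\nuinfecappr_r(t+1)-\nuinfecappr_r(t))$ when $i=r$ (the $(1-\delta)^{-1}$ cancelling the $(1-\delta)$ slack hard-wired into $\nuinfecappr_r$ in Definition~\ref{def:trajlower}).

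The main obstacle is controlling the negative surplus terms. A short calculation using Claim~\ref{clm:traj} shows $\nuinfecappr_{i-1}(t)/|\uinfec(t)|\ge\Omega\bigl((n^{k-1}p)^{1/(r-1)}\bigr)\to\infty$, so $|\uinfec(t)|\sum_{j=0}^{i-1}p_j=O(|\uinfec(t)|p_{i-1})$ is negligible compared to the $j=i-1$ main term $\nuinfecappr_{i-1}(t)p_{i-1}$. For $2\le i<r$, the extra hypothesis $|\uinfec_i(t)|\le\nuinfecappr_i(t+1)$ combined with $\nuinfecappr_i(t+1)/\nuinfecappr_{i-1}(t)=\Theta\bigl(\nuinfecappr(t)\binom{n}{k-2}p\bigr)=o(1)$ shows $|\uinfec_i(t)|p_{i-1}$ is of strictly smaller order than the main term, hence absorbed into $(1-o(1))$. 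For $i=r$ no such hypothesis is given, but $|\uinfec_r(t)|\le|\uinfec(t)|\le\nuinfecappr(t-1)+(n^{k-2}p)^{-1}/\zeta$ suffices: a direct computation using \eqref{eq:scaling} shows this surplus is $o(a^*)$, negligible compared to the target $\nuinfecappr_r(t+1)-\nuinfecappr_r(t)=\Theta(a^*)$, completing the argument.
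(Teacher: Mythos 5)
Your proposal is correct in substance and follows essentially the same route as the paper: you lower-bound the probability that a fixed vertex gains at least $i-j$ new active neighbours from $\act'(t+1)$ by $(1-o(1))\frac{(\nuinfecappr(t)-\nuinfecappr(t-1))^{i-j}}{(i-j)!}\bigl(\binom{n}{k-2}p\bigr)^{i-j}$, sum over the layers $\uinfec_j(t)\setminus\uinfec_{j+1}(t)$ (excluding $\uinfec(t)$), and then show the corrections coming from $\uinfec_{j+1}(t)$, $\uinfec_i(t)$ and $\uinfec(t)$ are negligible via the order estimates of Claims~\ref{clm:traj} and~\ref{clm:smallc}. The paper organises the same correction step differently, by choosing nested disjoint subsets $C_j'\subseteq \uinfec_j(t)\setminus(C_{j+1}'\cup\uinfec(t))$ of size $(1-o(1))\nuinfecappr_j(t)$ and using monotonicity of the per-vertex probability in $j$, where you use Abel summation and monotonicity of $p_j$; your explicit freshness-and-independence justification of the per-vertex probability (the relevant $k$-sets contain no vertex that was ever activated, hence are unqueried) is a welcome elaboration of a step the paper states without proof. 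One step is stated inaccurately: reversing the Abel summation on $|\uinfec_0(t)|p_0+\sum_{j\ge 1}|\uinfec_j(t)|(p_j-p_{j-1})$ does not ``recover'' $\sum_{j=0}^{i-1}\nuinfecappr_j(t)p_j$; after substituting the hypothesis it equals $\sum_{j=0}^{i-1}\nuinfecappr_j(t)p_j-\sum_{j=0}^{i-2}\nuinfecappr_{j+1}(t)p_j$ up to the $(1-o(1))$ factor, so you still need $\nuinfecappr_{j+1}(t)=o(\nuinfecappr_j(t))$. This is immediate from Claim~\ref{clm:traj} together with $\nuinfecappr(t-1)\binom{n}{k-2}p=o(1)$ (which follows from Claim~\ref{clm:smallc} under the stated hypotheses), and it is exactly the estimate the paper records as $c_{j+1}(t)=O\bigl(c(t-1)n^{k-2}p\,c_j(t)\bigr)=o(c_j(t))$; with that one line added your argument closes. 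Two cosmetic quantitative remarks: because $|\uinfec(t)|$ is only bounded by $\nuinfecappr(t-1)+(n^{k-2}p)^{-1}/\zeta$, the ratio $\nuinfecappr_{i-1}(t)/|\uinfec(t)|$ is guaranteed to be $\Omega\bigl((n^{k-1}p)^{1/(r+1)}\bigr)$ rather than $\Omega\bigl((n^{k-1}p)^{1/(r-1)}\bigr)$, and in the $i=r$ case your computation actually shows the surplus $|\uinfec(t)|p_{r-1}$ is $o\bigl(\nuinfecappr(t)-\nuinfecappr(t-1)\bigr)=o\bigl(\nuinfecappr_r(t+1)-\nuinfecappr_r(t)\bigr)$, which is the negligibility you need (the target may exceed $\Theta(a^*)$); neither affects the conclusion.
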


\begin{proof}
	
Recall that $|\act'(t+1)|\ge\min\{\nuinfecappr(t),|\uinfec(t)|\}-|\act(t)|\ge \nuinfecappr(t)-\nuinfecappr(t-1).$ We are exposing hyperedges which contain exactly one vertex in $\act'(t+1)$ and no vertex in $\uinfec(t)$. By Claim~\ref{clm:smallc} we have $|\uinfec(t)|=o((n^{k-2}p)^{-1})=o(n)$, thus for $2\le i \le r$
\begin{equation}\label{eq:expXhat}
	\mathbb{E}[|\uinfec_i(t+1)\setminus \uinfec_i(t)|]
	\ge\sum_{j=0}^{i-1} (1-o(1))\frac{(\nuinfecappr(t)-\nuinfecappr(t-1))^{i-j}}{(i-j)!}\left(\binom{n}{k-2}p\right)^{i-j}|\uinfec_{j}(t)\setminus (\uinfec_{j+1}(t)\cup \uinfec(t))|.
\end{equation}

As $\nuinfecappr(t)-\nuinfecappr(t-1)=o((n^{k-2}p)^{-1})$, we have  
$$\frac{(\nuinfecappr(t)-\nuinfecappr(t-1))^{i-j}}{(i-j)!}\left(\binom{n}{k-2}p\right)^{i-j}$$
is monotone increasing in $j$. Therefore removing an element of $\uinfec_j(t)\setminus (\uinfec_{j+1}(t)\cup \uinfec(t))$, for some $0\le j \le i-1$ will only decrease the value of \eqref{eq:expXhat}. Then for any sets $C_j'\subseteq \uinfec_j(t)\setminus (C_{j+1}' \cup \uinfec(t))$ for $0\le j \le i-1$ where $C_i'(t)=\uinfec_i(t)$ we have
\begin{align*}
	\mathbb{E}[|\uinfec_i(t+1)\setminus \uinfec_i(t)|]
	&\ge \sum_{j=0}^{i-1} (1-o(1))\frac{(\nuinfecappr(t)-\nuinfecappr(t-1))^{i-j}}{(i-j)!}\left(\binom{n}{k-2}p\right)^{i-j}|\uinfec_j(t)\setminus(\uinfec_{j+1}(t)\cup \uinfec(t))|\\
	&\ge \sum_{j=0}^{i-1} (1-o(1))\frac{(\nuinfecappr(t)-\nuinfecappr(t-1))^{i-j}}{(i-j)!}\left(\binom{n}{k-2}p\right)^{i-j}|\uinfec_j'(t)|.
\end{align*}

Assume that for every $0\le j \le i-1$ one can construct $C_j'$ such that for $0\le j \le i-1$ we have $|C_j'|\ge (1-o(1))\nuinfecappr_j(t)$. Then we have
\begin{align*}
	\mathbb{E}[|\uinfec_i(t+1)\setminus \uinfec_i(t)|]
	&\geq\sum_{j=0}^{i-1} (1-o(1))\frac{(\nuinfecappr(t)-\nuinfecappr(t-1))^{i-j}}{(i-j)!}\left(\binom{n}{k-2}p\right)^{i-j}\nuinfecappr_{j}(t).
\end{align*}
The statement then follows as by definition this last term is $(1-o(1))(\nuinfecappr_i(t+1)-\nuinfecappr_i(t))$, when $2\le i <r$ and $(1-o(1))(1-\delta)^{-1}(\nuinfecappr_r(t+1)-\nuinfecappr_r(t))$ when $i=r$.

In the remainder of the proof we focus on the construction of the aforementioned $C_j'$ and will consider two cases when $t=0$ and $t>0$.

If $t=0$, then $\uinfec_0(0)=V\setminus \uinfec(0)$ and for $1\le j \le i-1$ we have $\uinfec_j(0)=\emptyset$. Setting $C_j'=\uinfec_j(0)$ for $0\le j \le i-1$ implies that $|\uinfec_j'(t)|\ge(1-o(1))\nuinfecappr_j(t)$. 

Now consider $t>0$. 
We will select the sets $C_j'$ consecutively starting with $j=i-1$ and consider the remaining values of $j$ in a decreasing order. 
For every $0\le j \le i-1$ we choose $C_j'$ to contain an arbitrary subset of $\uinfec_j(t)\setminus (C_{j+1}'\cup \uinfec(t))$, with size $(1-o(1))\nuinfecappr_j(t)$. In the following we show that such a subset exists, using induction on $j$.

By Claim~\ref{clm:smallc} we have $|\uinfec(t)|=o((n^{k-2}p)^{-1})$. In addition, for $0\le j \le r-1$ 
$$\nuinfecappr_j(t)\ge \nuinfecappr_{r-1}(1)=\Omega(a_c^{r-1} n (n^{k-2}p)^{r-1})=\Omega((n^{k-2}p)^{-1}),$$
and thus
\begin{equation}\label{eq:fewinfected}
|\uinfec_j(t)\setminus \uinfec(t)|\ge (1-o(1))\nuinfecappr_j(t).
\end{equation}

Note that $|\uinfec_i(t)\setminus \uinfec(t)|\le c_{i}(t+1)$, as either $i<r$ and this follows from our assumption $|\uinfec_i(t)|\le \nuinfecappr_i(t+1)$ or $i=r$, in which case it is a consequence of $\uinfec_r(t)\subseteq \uinfec(t)$. 
Recall that $c_{j+1}(t)$ is monotone increasing in $t$, therefore using \eqref{eq:fewinfected} we have for $0\le j \le i-1$
$$|\uinfec_j(t)\setminus (C_{j+1}'\cup \uinfec(t))|\ge |\uinfec_j(t)\setminus \uinfec(t)|-|C_{j+1}'\setminus \uinfec(t)| \ge (1-o(1))\nuinfecappr_{j}(t)-\nuinfecappr_{j+1}(t+1).$$
By Claim~\ref{clm:traj} and since $c(t)-c(t-1)=o((n^{k-2}p)^{-1})$ for $0\le j \le i-1$ we have 
$$c_{j+1}(t+1)-c_{j+1}(t)=O\left((c(t)-c(t-1))c(t)^{j} n (n^{k-2} p)^{j+1}\right)=o(c(t)^j n (n^{k-2} p)^{j})=o(c_j(t)).$$
Together with Claim~\ref{clm:smallc} a repeated application of Claim~\ref{clm:traj} gives for $0\le j \le i-1$ that
$$c_{j+1}(t)=O(c(t-1)n^{k-2}p c_j(t))=o(c_j(t)) $$
and we conclude that $|\uinfec_j(t)\setminus (C_{j+1}'\cup \uinfec(t))|\ge (1-o(1))\nuinfecappr_{j}(t)$
for every $0\le j \le i-1$. Thus we were able to identify sets $C_j'\subseteq \uinfec_j(t)\setminus (\uinfec(t)\cup C_{j+1}')$ such that $|C_j'|\ge (1-o(1))\nuinfecappr_j(t)$, completing the proof. 
\end{proof}

Define $\Xi(0)=\emptyset$ and $\Xi(t+1)=\Xi(t)\cup\Xi'(t+1)$ where $\Xi'(t+1)\subseteq \infedge'(t+1)$ is the set of hyperedges which intersect another hyperedge of $\infedge'(t+1)$ both in a vertex of $\act'(t)$ and in a vertex of $V\setminus \uinfec(t)$.  
In the following lemma we consider the one step change in our random variables. 

\begin{lemma}\label{lem:uindstep}
	Assume $\nuinfecappr(t)\le |\uinfec(t)|\le \nuinfecappr(t-1)+(n^{k-2}p)^{-1}/\zeta $ and that
	for every $0\le i< r$ we have $|\uinfec_i(t)|\geq (1-o(1))\nuinfecappr_i(t)$. If $|\infedge(t)|\le (1+o(1))\nuinfecappr(t)\binom{n}{k-1}p$ and $\Xi(t)= o\left(\nuinfecappr(t)n^{k-1}p\right)$, 
	then with probability $$1-\exp(-\delta^2(\nuinfecappr(t+1)-\nuinfecappr(t))/16)-o(\exp(-(\nuinfecappr(t)-\nuinfecappr(t-1))))$$
	the following events hold.
	\begin{enumerate}[label=(\roman*)]
		\item[{\em (I)}] \label{cond:1} For every $0\le i<r$ we have $|\uinfec_i(t+1)|\geq (1-o(1))\nuinfecappr_i(t+1)$;
		\item[{\em (II)}] \label{cond:2} $|\uinfec(t+1)|-|\uinfec(t)|\geq \nuinfecappr(t+1)-\nuinfecappr(t)$;
		\item[{\em (III)}] \label{cond:3} $|\infedge(t+1)|\leq (1+o(1))\nuinfecappr(t+1)\binom{n}{k-1}p$;
        \item[{\em (IV)}] \label{cond:4} $\Xi(t+1)= o\left(\nuinfecappr(t+1)n^{k-1}p\right)$.
    \end{enumerate}
\end{lemma}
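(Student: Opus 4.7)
The plan is to verify each of conditions (I)--(IV) separately via a dedicated concentration bound and then combine everything by the union bound. Throughout, Claim~\ref{clm:smallc} together with the hypothesis gives $|\uinfec(t)|=o((n^{k-2}p)^{-1})=o(n)$, while the activation rule~\eqref{eq:activesize} (in the first regime, which applies by the hypothesis $|\uinfec(t)|\le \nuinfecappr(t-1)+(n^{k-2}p\zeta)^{-1}$) together with $|\uinfec(t)|\ge \nuinfecappr(t)$ yields $|\act(t+1)|=\nuinfecappr(t)$ and $|\act'(t+1)|\le \nuinfecappr(t)-\nuinfecappr(t-1)+o((n^{k-2}p)^{-1})$.

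For (I) with $2\le i\le r$, we partition $\uinfec_i(t+1)\setminus \uinfec_i(t)$ according to the layer $\uinfec_j(t)\setminus(\uinfec_{j+1}(t)\cup\uinfec(t))$ in which a vertex $u$ sits at step $t$. On each layer we introduce the indicator $Z_u$ that $u$ fails to acquire $i-j$ new neighbours in $\act'(t+1)$ through hyperedges of $\infedge'(t+1)$, and apply Lemma~\ref{lem:negcorrappl} with $U=\act'(t+1)$, $V'=\uinfec(t)\setminus\act'(t+1)$ and $g(u)=i-j$. The side-condition $|\{u:g(u)=1\}|=o(n)$ holds because any such $u$ already lies in an exposed hyperedge of $\infedge(t)$, so $|\uinfec_{i-1}(t)\setminus\uinfec(t)|=O(k|\infedge(t)|)=O(\nuinfecappr(t)n^{k-1}p)=o(n)$ using the hypothesis on $|\infedge(t)|$ and $\nuinfecappr(t)n^{k-2}p=o(1)$. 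Lemma~\ref{lem:negcorr} (with $\lambda=1$) then yields the upper-tail bound on $\sum_u Z_u$, and combining with the matching expected-value estimate from Claim~\ref{clm:expect} gives $|\uinfec_i(t+1)\setminus \uinfec_i(t)|\ge (1-o(1))(\nuinfecappr_i(t+1)-\nuinfecappr_i(t))$, hence $|\uinfec_i(t+1)|\ge (1-o(1))\nuinfecappr_i(t+1)$. For $i=r$ the $(1-\delta)^{-1}$ factor in Claim~\ref{clm:expect} absorbs the concentration slack of size $\delta$, producing the $\exp(-\delta^2(\nuinfecappr(t+1)-\nuinfecappr(t))/16)$ term in the conclusion. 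The case $i=0$ is trivial since $|\uinfec_0(t+1)|=n-a$, and $i=1$ is handled jointly with (III) by a direct hyperedge-counting argument, using that each newly exposed hyperedge contributes $k-1$ fresh vertices to $\uinfec_1$ up to the negligible correction controlled by (IV).

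For (II) when $r\ge 3$, the identity $\uinfec(t+1)\setminus\uinfec(t)=\uinfec_r(t+1)\setminus\uinfec_r(t)$ (valid since $\uinfec_r(\tau)\subseteq\uinfec(\tau)$ for every $\tau$ when $r\ge 3$) reduces the claim to the $i=r$ case of (I) together with $\eta=1$. When $r=2$, the set $\uinfec(t+1)\setminus\uinfec(t)$ equals the closed neighbourhood in $\infedge(t+1)$ of $\uinfec_2(t+1)\setminus\uinfec_2(t)$ with already-infected vertices removed; we call a witness $v\in\uinfec_2(t+1)\setminus\uinfec_2(t)$ \emph{typical} if its two witnessing hyperedges in $\infedge(t+1)$ meet only in $v$ inside $V\setminus\uinfec(t)$ and neither intersects any other hyperedge of $\infedge(t+1)$ in $V\setminus\uinfec(t)$. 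A typical witness contributes exactly $2k-3=\eta$ fresh infected vertices, typical witnesses contribute disjoint vertex sets, and Lemma~\ref{lem:fewint} applied with $\ell=2$, $U=\act'(t+1)$, $V'=\uinfec(t)$, $F=\infedge(t)\setminus\Xi(t)$ bounds the number of atypical witnesses and lost contributions by $o(\nuinfecappr_2(t+1))$ with failure probability $o(\exp(-(\nuinfecappr(t)-\nuinfecappr(t-1))))$. Combining with the lower bound on $|\uinfec_2(t+1)\setminus\uinfec_2(t)|$ from the $i=2$ case of (I) yields (II).

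For (III), since each hyperedge of $\infedge(t+1)$ contains a unique active vertex whose activation step is determined by the process, $|\infedge(t+1)|$ is a sum of independent Bernoulli$(p)$ indicators over $|\act(t+1)|\binom{n-|\uinfec(t)|}{k-1}(1+o(1))$ trials; Theorem~\ref{thm:indconc} and $|\uinfec(t)|=o(n)$ give $|\infedge(t+1)|\le (1+o(1))|\act(t+1)|\binom{n}{k-1}p=(1+o(1))\nuinfecappr(t)\binom{n}{k-1}p\le (1+o(1))\nuinfecappr(t+1)\binom{n}{k-1}p$ with probability $1-\exp(-\Omega(a^*))$. For (IV), Lemma~\ref{lem:fewint} applied with $\ell=2$, $U=\act'(t+1)$, $V'=\uinfec(t)$, $F=\emptyset$ bounds the strongly-intersecting pairs within $\infedge'(t+1)$ by $o(|\act'(t+1)|^2 n^{2k-3}p^2)=o(|\act'(t+1)|n^{k-1}p)$ (using $|\act'(t+1)|n^{k-2}p=o(1)$), so $\Xi'(t+1)=o(\nuinfecappr(t+1)n^{k-1}p)$ with failure probability $o(\exp(-(\nuinfecappr(t)-\nuinfecappr(t-1))))$; combined with the inductive bound on $\Xi(t)$ this yields (IV). The main technical hurdle is verifying the hypotheses of Lemma~\ref{lem:negcorrappl} uniformly across layers of (I), particularly the $g=1$ side-condition, and in the $r=2$ case showing that the atypical-contribution bounds from Lemma~\ref{lem:fewint} are indeed negligible compared to the main term; once these are in place the four concentration estimates combine by the union bound to deliver the stated failure probability.
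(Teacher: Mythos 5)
Your proposal is correct and follows essentially the same route as the paper: the same decomposition into (I)--(IV), Lemma~\ref{lem:negcorrappl} combined with Lemma~\ref{lem:negcorr} and Claim~\ref{clm:expect} for the layers $2\le i\le r$ (with the $\delta$-slack at $i=r$ and an intermediate slack for $1<i<r$), Lemma~\ref{lem:fewint} for the $i=1$ case and (IV), Theorem~\ref{thm:indconc} for (III), and a union bound. The one step you gloss over is in (II) for $r=2$: witnesses whose hyperedges meet $\Xi(t)$ are not covered by your application of Lemma~\ref{lem:fewint} with $F=\infedge(t)\setminus\Xi(t)$, and they must be handled by a separate Chernoff-type estimate (Theorem~\ref{thm:indconc}) using the hypothesis $\Xi(t)=o(\nuinfecappr(t)n^{k-1}p)$, exactly as the paper does, showing the number of newly exposed hyperedges intersecting $\Xi(t)$ is $o(\nuinfecappr_2(t+1)-\nuinfecappr_2(t))$.
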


\begin{proof}
	By definition $|\act'(t+1)|\ge \min\{\nuinfecappr(t),|\uinfec(t)|\}-|\act(t)|\ge \nuinfecappr(t)-\nuinfecappr(t-1).$
	
	We start by proving (I). 
	This holds deterministically when $i=0$ as for every $t\ge 0$ we have $\nuinfecappr_0(t)=n$ and $|\uinfec_0(t)|=|V\setminus \uinfec(0)|=(1-o(1))n$.
	
	Next for $i=1$, let $\mathcal{L}$ denote the set of hyperedges in $\infedge'(t+1)$, which contain only elements in $\uinfec_0(t)\setminus \uinfec_1(t)$. By Claim~\ref{clm:smallc} we have 
 \begin{equation}\label{eq:infedgebound}
 |\uinfec_1(t)|\le k |\infedge(t)|=o(n)
 \end{equation}
 and thus by Theorem~\ref{thm:indconc} with probability $1-o(\exp(-(\nuinfecappr(t)-\nuinfecappr(t-1))))$ we have
	$$|\mathcal{L}|\ge (1-o(1))(\nuinfecappr(t)-\nuinfecappr(t-1))\binom{n}{k-1}p.$$
	Recall that $|\act'(t+1)|\ge \nuinfecappr(t)-\nuinfecappr(t-1)$, which together with Lemma~\ref{lem:mininf} implies $|\act'(t+1)|=\Omega(a^*)=\Omega(a_c)$. In addition Claim~\ref{clm:smallc} implies $|\act'(t+1)|\le \nuinfecappr(t-1)+o((n^{k-2}p)^{-1}) =o((n^{k-2}p)^{-1})$. 
	Together with \eqref{eq:infedgebound} we have verified that the conditions for Lemma~\ref{lem:fewint} are met when $U=\act'(t+1)$, $V'=\uinfec_1(t)\setminus \act'(t+1)$ and $F=\emptyset$. So with probability $1-o(\exp(-(\nuinfecappr(t)-\nuinfecappr(t-1))))$ almost every pair of hyperedges in $\mathcal{L}$ is vertex disjoint (except possibly for a vertex in $\act'(t+1)$), and we not only have 
 $$|\uinfec_1(t+1)\setminus\uinfec_1(t)|\geq (1-o(1))(\nuinfecappr_1(t+1)-\nuinfecappr_1(t)),$$
 but also  (IV).

		Now fix $2\le i \le r$. 
	For $u\in V\setminus (\uinfec_i(t)\cup \uinfec(t))$ let $X_{u,i}$ be the indicator random variable for the event $u\in \uinfec_i(t+1)$. Let $X_{i}=\sum_{u\in V\setminus(\uinfec_i(t)\cup\uinfec(t))} X_{u,i}$. 
	Assign for every $u\in V\setminus (\uinfec_{i}(t)\cup \uinfec(t))$ an independent Bernoulli random variable $\widehat{X}_{u,i}$ such that  
	$$\mathbb{P}[\widehat{X}_{u,i}=1]=(1-o(1))\frac{(\nuinfecappr(t)-\nuinfecappr(t-1))^{i-j}}{(i-j)!}\left(\binom{n}{k-2}p\right)^{i-j},$$
	and denote by $\widehat{X}_i=\sum_{u\in V\setminus (\uinfec_i(t)\cup \uinfec(t))}\widehat{X}_{u,i}$. Then $\mathbb{E}[\widehat{X}_{u,i}]=(1-o(1))\mathbb{E}[X_{u,i}]$ and thus also $\mathbb{E}[\widehat{X}_i]=(1-o(1))\mathbb{E}[X_i]$.
	
	Define $Z_{u,i}=1-X_{u,i}$ and $\widehat{Z}_{u,i}=1-\widehat{X}_{u,i}$. Similarly as before we will also need the sums $Z_i=\sum_{u\in V\setminus (\uinfec_i(t)\cup \uinfec(t))} Z_{u,i}$ and $\widehat{Z}_i=\sum_{u\in V\setminus (\uinfec_i(t)\cup \uinfec(t))} \widehat{Z}_{u,i}$.
	
	Next we apply Lemma~\ref{lem:negcorrappl}. Recall that $\act'(t+1)$ satisfies both $|\act'(t+1)|=\Omega(a_c)$ and $|\act'(t+1)|=o((n^{k-2}p)^{-1})$. Also by \eqref{eq:infedgebound} and $|\uinfec(0)|=o(n)$ we have $|\uinfec(t)|=o(n)$. Set $U=\act'(t+1)$, $V'=\uinfec(t)\setminus \act'(t+1)$, $W=V\setminus (\uinfec_i(t)\cup U\cup V')$ and for $w\in W\cap (\uinfec_j(t)\setminus \uinfec_{j+1}(t))$ let $g(w)=i-j$. By \eqref{eq:infedgebound} we also have $|\uinfec_{i-1}(t)\setminus \uinfec_i(t)|\leq |\uinfec_1(t)|=o(n)$, and thus $|\{w\in W:g(w)=1\}|=o(n)$.
    Then
	Lemma~\ref{lem:negcorrappl} implies for every $I\subseteq W$ that
	\begin{equation*}
	\mathbb{E}\left[\prod_{u\in I} Z_{u,i}\right]\leq \mathbb{E}\left[\prod_{u\in I} \widehat{Z}_{u,i}\right].
	\end{equation*}
	Note that $\mathrm{Var}(\widehat{Z}_i)=\mathrm{Var}(\widehat{X}_i)\leq \mathbb{E}[\widehat{X}_i]$ thus Lemma~\ref{lem:negcorr} implies for any $\lambda>0$ that 
	\begin{align}
	\mathbb{P}\left[X_i\leq (1-\lambda) \mathbb{E}[ \widehat{X}_i]\right]&=\mathbb{P}\left[Z_i\geq \mathbb{E}[ \widehat{Z}_i]+\lambda \mathbb{E}[\widehat{X}_i]\right]\nonumber\\
	&\leq \exp\left(-(1-o(1))\lambda^2\frac{\mathbb{E}[\widehat{X}_i]^2}{2(\mathbb{E}[\widehat{X}_i]+\mathbb{E}[\widehat{X}_i]/3)}\right)\nonumber\\
	&\leq\exp\left(-\lambda^2\frac{\mathbb{E}[\widehat{X}_i]}{3}\right).\label{eq:expsmallprob}
	\end{align}
	
	In order to complete the proof of (I) 
	we consider the case $1<i<r$. 	
	Let $\lambda$ satisfy 
	$$\left(\frac{\nuinfecappr(t+1)-\nuinfecappr(t)}{\nuinfecappr_i(t+1)-\nuinfecappr_i(t)} \right)^{1/2}\ll \lambda \ll 1,$$
	and note that such a $\lambda$ exists.
    Note that we may assume that $|\uinfec_i(t)|\leq \nuinfecappr_i(t+1)$, as otherwise (I) follows. 
	Then by \eqref{eq:expsmallprob}, Claim~\ref{clm:expect} and $\mathbb{E}[\widehat{X}_i]=(1-o(1))\mathbb{E}[X_i]$ we have 
	\begin{align*}
	\mathbb{P}[X_i\leq (1-o(1))(1-\lambda)(\nuinfecappr_i(t+1)-\nuinfecappr_i(t))]&\leq \exp\left(-(1-o(1))\lambda^2\frac{(\nuinfecappr_i(t+1)-\nuinfecappr_i(t))}{3}\right)\\
	&=o\left(\exp\left(-\delta^2(\nuinfecappr(t+1)-\nuinfecappr(t))/16\right)\right)
	\end{align*}
	due to our choice of $\lambda$. Completing the proof of (I). 
	
	Now if $i=r$ we select $\lambda=\delta/2$. Therefore by \eqref{eq:expsmallprob}, Claim~\ref{clm:expect} and $$\mathbb{E}[\widehat{X}_i]=(1-o(1))\mathbb{E}[X_i]=(1-o(1))\mathbb{E}[|\uinfec_i(t+1)\setminus \uinfec_i(t)|]$$
	 we have
	$$\mathbb{P}\left[X_r\leq \left(1+\frac{\delta}{2}\right)(\nuinfecappr_r(t+1)-\nuinfecappr_r(t))\right]\leq \exp\left(-(1+o(1))\delta^2 \frac{\nuinfecappr(t+1)-\nuinfecappr(t)}{12}\right).$$

	When $r\ge 3$ one can derive (II) 
	as $\uinfec(t+1)\setminus \uinfec(t)=\uinfec_r(t+1)\setminus \uinfec_r(t)$, for every $t\ge 0$. 
    
    On the other hand, if $r=2$, we have $\uinfec(t+1)\setminus\uinfec(t)$ consists of the vertices in $\uinfec_2(t+1)\setminus \uinfec_2(t)$ and their neighbours in the hypergraph spanned by $\infedge(t+1)$. Our goal is to show that with probability $1-o(\exp(\nuinfecappr(t)-\nuinfecappr(t-1)))$ almost every vertex in $\uinfec_2(t+1)\setminus \uinfec_2(t)$ is responsible for $2k-3$ infections. For a vertex to be involved in less than $2k-3$ vertices becoming infected, it has to be either part of an intersecting 3-tuple, a strongly intersecting 2-tuple or within a hyperedge of $\Xi(t)$. 
    Using Theorem~\ref{thm:indconc} and 
    $$n^{k-1}p \nuinfecappr(t)(\nuinfecappr(t)-\nuinfecappr(t-1))n^{k-2}p=\Theta(\nuinfecappr(t)-\nuinfecappr(t-1))=\Theta(\nuinfecappr_2(t)-\nuinfecappr_2(t-1)),$$
     we have with probability $1-o(\exp(-(\nuinfecappr(t)-\nuinfecappr(t-1))))$ that the number of hyperedges intersecting $\Xi(t)$ is $o(\nuinfecappr_2(t)-\nuinfecappr_2(t-1))$.
    Note that in $\infedge(t)\setminus \Xi(t)$ either every vertex is infected or exactly one vertex is infected, and thus every hyperedge in $\infedge(t)\setminus \Xi(t)$, which still contains uninfected vertices, must be vertex disjoint, except for possibly a vertex in $\act(t)$. By Lemma~\ref{lem:fewint} with probability $1-o(\exp(-(\nuinfecappr(t)-\nuinfecappr(t-1)))$ both the number of intersecting 3-tuples and the number of strongly intersecting 2-tuples, which rely on the hyperedges in $\infedge'(t+1)$ and $\infedge(t)\setminus \Xi(t)$ is $o(\nuinfecappr_2(t+1)-\nuinfecappr_2(t))$. Therefore with probability $1-o(\exp(-(\nuinfecappr(t)-\nuinfecappr(t-1)))$ almost every vertex in $\uinfec_2(t+1)\setminus (\uinfec_2(t)\cup \uinfec(t))$ results in $2k-3$ infections, completing the proof of (II). 

	Finally, by Theorem~\ref{thm:indconc} we have with probability $1-o(\exp(-(\nuinfecappr(t)-\nuinfecappr(t-1)))$ we have
	$$\infedge'(t+1)\le (1+o(1))(\nuinfecappr(t)-\nuinfecappr(t-1))\binom{n}{k-1}p,$$
	proving (III). 
\end{proof}

\subsection{The last two steps}
In the following two lemmas we consider what happens in mild $r$-bootstrap percolation after $|\uinfec(t)\setminus \act(t)|$  becomes sufficiently large, and show that almost every vertex becomes infected in the next two steps.

\begin{lemma}\label{lem:penultimatestep}
	Consider $H_k(n,p)$ with vertex set $V=[n]$ and hyperedge set $E$. Let $U,V'\subseteq V$ be two disjoint subsets with $|V'|=o(n)$ while  $|U|=(n^{k-2}p)^{-1}/\zeta$. Denote by $E'\subseteq E$ the set of hyperedges which contain exactly one vertex in $U$ and no vertices in $V'$. 	
	Denote by $E'\subseteq E$ the set of hyperedges which contain exactly one vertex in $U$ and no vertex in $V'$. Then with probability $1-o(\exp(-a^*))$ the number of vertices in $V\setminus (U\cup V')$ with at least $r$ neighbours in $U$, in the subgraph spanned by $E'$, is at least $(n^{k-2}p)^{-1}\zeta^{1/2}$ and $|E'|=o(n)$.
\end{lemma}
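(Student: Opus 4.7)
The plan is to prove the two parts of the statement separately and combine them by a union bound. Throughout, I will use $\zeta = (n^{k-1}p)^{1/(r+1)}$, which tends to infinity since $n^{k-2}p \gg n^{-1}$, together with the identity $a^* = \Theta\!\big(n/(n^{k-1}p)^{r/(r-1)}\big)$ read off from \eqref{eq:scaling}. For the bound $|E'| = o(n)$, observe that $|E'|$ is a sum of independent Bernoulli variables (one for each $k$-set with exactly one vertex in $U$ and no vertex in $V'$), so $\mathbb{E}[|E'|] \le |U|\binom{n}{k-1}p = O(n/\zeta) = o(n)$. Theorem~\ref{thm:indconc} then gives $|E'| \le 2\mathbb{E}[|E'|]$ with failure probability $\exp(-\Omega(n/\zeta))$, which is $o(\exp(-a^*))$ because $1/(r+1) < r/(r-1)$ and $n^{k-1}p \to \infty$.

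For the main statement I will apply Lemma~\ref{lem:negcorrappl} with $g(v) \equiv r$ to the indicators $Z_v = \mathbbm{1}\{v \text{ has fewer than } r \text{ neighbours in } U \text{ via } E'\}$ for $v \in V \setminus (U \cup V')$. The hypotheses hold: $|U| = (n^{k-2}p)^{-1}/\zeta = o((n^{k-2}p)^{-1})$; one checks $|U| = \Omega(a_c)$ because the exponent $(r+2)/(r+1)$ is strictly less than $r/(r-1)$; and the set $\{v : g(v)=1\}$ is empty since $r \ge 2$. The lemma produces independent Bernoullis $\widehat Z_v$ dominating the $Z_v$ in the sense required by Lemma~\ref{lem:negcorr}, with
\[
\mathbb{P}[\widehat Z_v = 1] = 1 - (1-o(1))\frac{|U|^r}{r!}\Big(\binom{n}{k-2}p\Big)^r = 1 - \frac{1-o(1)}{r!\zeta^r},
\]
using $|U|\binom{n}{k-2}p = 1/\zeta$. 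Consequently $|V \setminus (U\cup V')| - \mathbb{E}[\widehat Z] = (1-o(1))n/(r!\zeta^r)$ and $\mathrm{Var}(\widehat Z) = O(n/\zeta^r)$.

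The target $X_r := |V \setminus (U\cup V')| - \sum_v Z_v \ge (n^{k-2}p)^{-1}\zeta^{1/2} = n/\zeta^{r+1/2}$ is equivalent to $\sum_v Z_v \le \mathbb{E}[\widehat Z] + \vartheta$ with $\vartheta = (1-o(1))n/(r!\zeta^r)$, exploiting that the required margin $n/\zeta^{r+1/2}$ is a vanishing fraction of the natural scale $n/\zeta^r$. Lemma~\ref{lem:negcorr} applied with $\lambda=1$ then bounds the failure probability by
\[
\exp\!\left(-\frac{\vartheta^2}{2(\mathrm{Var}(\widehat Z) + \vartheta/3)}\right) = \exp(-\Omega(n/\zeta^r)),
\]
which is again $o(\exp(-a^*))$ since $r/(r+1) < r/(r-1)$. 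The only genuine technical task is matching Lemma~\ref{lem:negcorrappl} to the present setup and carrying out the bookkeeping with the exponents $1/(r+1)$, $r/(r+1)$ and $r/(r-1)$ of $n^{k-1}p$ so that both failure bounds can be placed below $\exp(-a^*)$; once that is done, the Bernstein-type tail has plenty of slack because we only need to detect a first-order deviation (size $n/\zeta^r$) while asking for the much smaller shortfall $n/\zeta^{r+1/2}$.
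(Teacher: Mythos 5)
Your proposal is correct and follows essentially the same route as the paper: the paper also applies Lemma~\ref{lem:negcorrappl} with $g\equiv r$ together with Lemma~\ref{lem:negcorr} to show the number of vertices with at least $r$ neighbours in $U$ exceeds a constant fraction of its mean $\Theta\big(\zeta(n^{k-2}p)^{-1}\big)=\Theta(n/\zeta^r)$, with failure probability $\exp(-\Omega(n/\zeta^r))=o(\exp(-a^*))$, and handles $|E'|=o(n)$ via Theorem~\ref{thm:indconc}. The only cosmetic differences are that the paper takes the deviation $\mathbb{E}[\widehat X]/2$ rather than your nearly-full margin, and that $|U|\binom{n}{k-2}p$ is only $\Theta(1/\zeta)$ rather than exactly $1/\zeta$, neither of which affects the argument.
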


\begin{proof}
For $u\in V \setminus (U \cup V')$ let $X_u$ be the indicator random variable that $u$ has at least $r$ neighbours in $U$. Let $\widehat{X}_u$ be independent Bernoulli random variables such that
$$\mathbb{P}[\widehat{X}_{u}=1]=(1-o(1))\frac{|U|^{r}}{r!}\left(\binom{n}{k-2}p\right)^{r}.$$
In addition, define $Z_u=1-X_u$ and $\widehat{Z}_u=1-\widehat{X}_u$, and denote by $X,\widehat{X},Z,\widehat{Z}$ the sum over $V\setminus (U\cup V')$ of the $X_u,\widehat{X}_u,Z_u,\widehat{Z}_u$, respectively. 
Note that $|U|=o((n^{k-2}p)^{-1})$ and 
$$ |U|=\frac{1}{n^{k-2}p\zeta}=\frac{1}{n^{k-2}p (n^{k-1}p)^{1/(r+1)}}=\omega\left(\left(\frac{1}{n\left(n^{k-2}p\right)^r}\right)^{1/(r-1)}\right)=\omega(a^*).$$
Then by Lemma~\ref{lem:negcorrappl} (setting $g(w)=r$ for every $w\in V\setminus (U\cup V')$) and using Lemma~\ref{lem:negcorr} with $\mathrm{Var}(\widehat{Z})=\mathrm{Var}(\widehat{X})\leq \mathbb{E}[\widehat{X}]$ implies
	\begin{align*}
	\mathbb{P}\left[X\leq \mathbb{E}[ \widehat{X}]/2\right]&=\mathbb{P}\left[Z\geq \mathbb{E}[ \widehat{Z}]+\mathbb{E}[\widehat{X}]/2\right]\leq\exp\left(-\frac{\mathbb{E}[\widehat{X}]}{12}\right).
\end{align*}

The first part of the statement follows as
\begin{align*}
\mathbb{E}[\widehat{X}]=(1-o(1))n\binom{|U|}{r}\left(\binom{n}{k-2}p\right)^r&=\Omega\left( n\left(\frac{1}{n^{k-2}p \zeta}\right)^r (n^{k-2}p)^r \right)=\Omega\left(\frac{1}{(n^{k-2}p)}\zeta \right)=\omega(a^*).
\end{align*}

As for the bound on $|E'|$, this follows from Theorem~\ref{thm:indconc} and $n/\zeta\ge (n^{k-2}p\zeta)^{-1}=\omega(a^*)$.

\end{proof}

\begin{lemma}\label{lem:finalstep}
		Consider $H_k(n,p)$ with vertex set $V=[n]$ and hyperedge set $E$. Let $U,V'\subseteq V$ be two disjoint subsets with $|V'|=o(n)$ while $|U|=(n^{k-2}p)^{-1}\zeta^{1/2}$. 
		Denote by $E'\subseteq E$ the set of hyperedges which contain exactly one vertex in $U$ and no vertex in $V'$. Then with probability $1-o(\exp(-a^*))$ there are $o(n)$ vertices in $V\setminus (U \cup V')$ which have less than $r$ neighbours in $U$, within the subgraph spanned by $E'$.
\end{lemma}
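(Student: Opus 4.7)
The argument parallels the proof of Lemma~\ref{lem:penultimatestep}, but with the roles of rare and common events exchanged. Since $|U|\binom{n}{k-2}p = \zeta^{1/2}$ and $\zeta \to \infty$ (because $n^{k-1}p = \omega(1)$), a typical vertex $v \in V\setminus(U\cup V')$ has expected degree $(1-o(1))\zeta^{1/2}$ in the subgraph spanned by $E'$, so $\{d_v < r\}$ is a \emph{rare} event. My plan is to first establish a per-vertex tail bound of the form $\mathbb{P}[d_v < r] \le \exp(-c\zeta^{1/2})$, and then boost this to the required concentration $o(\exp(-a^*))$ via the factorial moment method. The key structural observation is that although the indicators $\mathbbm{1}[d_v < r]$ are positively correlated (each being a decreasing event in the edge set, hence unsuited to Lemma~\ref{lem:negcorrappl} in this regime), the joint event $\bigcap_i\{d_{v_i} < r\}$ on $s$ distinct vertices forces the \emph{sum} $S := \sum_i d_{v_i}$ to be small, and $S$ decomposes as a weighted sum of independent Bernoullis over $k$-sets, to which Theorem~\ref{thm:indconc} applies directly.

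Specifically, for each $v$ the random variable $d_v = |\{e \in E' : v \in e\}|$ is binomial with $m_v = (1-o(1))|U|\binom{n}{k-2}$ trials (using $|U|, |V'| = o(n)$), giving $\mathbb{E}[d_v] = (1-o(1))\zeta^{1/2}$; applying Theorem~\ref{thm:indconc} to $-d_v$ yields the per-vertex bound. For an ordered tuple $(v_1,\dots,v_s)$ of distinct vertices in $V\setminus(U\cup V')$ I write $S = \sum_e c_e W_e$, where $W_e \sim \mathrm{Bern}(p)$ are independent edge indicators ranging over the $k$-sets $e$ with $|e \cap U| = 1$, $e \cap V' = \emptyset$, and $e \cap \{v_1,\dots,v_s\} \neq \emptyset$, and $c_e := |e \cap \{v_1,\dots,v_s\}| \in \{1,\dots,k-1\}$. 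Since $\mathbb{E}[S] = (1-o(1))s\zeta^{1/2}$ and $\mathrm{Var}(S) \le (k-1)\mathbb{E}[S]$ (from $c_e^2 \le (k-1)c_e$), a second application of Theorem~\ref{thm:indconc} to $-S$ produces $\mathbb{P}\bigl[\bigcap_i\{d_{v_i} < r\}\bigr] \le \mathbb{P}[S < rs] \le \exp(-c's\zeta^{1/2})$ for some constant $c' > 0$, valid once $\zeta^{1/2} \gg r$.

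Summing over ordered tuples yields $\mathbb{E}[Y^{\underline{s}}] \le n^s \exp(-c's\zeta^{1/2})$, where $Y := |\{v : d_v < r\}|$, and factorial-moment Markov gives, for any integer $m \ge 2s$,
\[
\mathbb{P}[Y \ge m] \le \frac{\mathbb{E}[Y^{\underline{s}}]}{m^{\underline{s}}} \le \left(\frac{2n}{m}\right)^{\!s}\exp(-c's\zeta^{1/2}).
\]
Taking $m = \lfloor n\zeta^{-1/8} \rfloor = o(n)$ and $s = \lceil 2a^*/(c'\zeta^{1/2}) \rceil + 1$, the prefactor $(2\zeta^{1/8})^s$ is absorbed by $\exp(-c's\zeta^{1/2})$ because $\zeta^{1/2} \gg \log \zeta$, producing $\mathbb{P}[Y \ge m] \le \exp(-c's\zeta^{1/2}/2) \le \exp(-a^* - c'\zeta^{1/2}/2) = o(\exp(-a^*))$, as required. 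The main obstacle is precisely this boost from the per-vertex bound $\exp(-c\zeta^{1/2})$ to the target $o(\exp(-a^*))$, since $\zeta^{1/2}$ may be much smaller than $a^*$; the factorial moment machinery sidesteps this by scaling the effective exponent linearly in $s$. The parameter choice remains valid across the full regime because $a^* = o(n)$ (a consequence of $\alpha < 1$ in the parametrisation $n^{k-2}p = n^{-\alpha}$) together with $\zeta \to \infty$ guarantees $n\zeta^{3/8} \gg a^*$, ensuring $m \ge 2s$.
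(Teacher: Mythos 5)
Your factorial-moment strategy is a genuinely different (and in principle viable) route from the paper's martingale argument, but as written it proves a different statement than the lemma. The gap is the identification of ``fewer than $r$ neighbours in $U$'' with ``incident to fewer than $r$ hyperedges of $E'$''. Since every hyperedge of $E'$ contains exactly one vertex of $U$, the number of $U$-neighbours of $v$ is the number of \emph{distinct} $U$-vertices among the edges of $E'$ through $v$, which can be strictly smaller than your $d_v$ when several of those edges share their $U$-vertex. Hence $\{N_v<r\}$ strictly contains $\{d_v<r\}$, your count $Y=|\{v:d_v<r\}|$ undercounts the bad vertices of the lemma, and the implication you feed into Theorem~\ref{thm:indconc}, namely $\bigcap_i\{N_{v_i}<r\}\Rightarrow \sum_i d_{v_i}<rs$, is false. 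The discrepancy is not ignorable at the required error scale: the probability that two edges of $E'$ through a fixed $v$ share their $U$-vertex is of order $|U|\binom{n}{k-2}^2p^2=\Theta\big(n^{k-2}p\,\zeta^{1/2}\big)$, which is only polynomially small and thus enormously larger than your per-vertex bound $\exp(-c\zeta^{1/2})$, so these ``collision'' vertices cannot be swept into the tuple estimate without a separate argument; and Lemma~\ref{lem:fewint}, the paper's tool for such intersecting-pair counts, is not applicable here because its hypothesis $|U|=o((n^{k-2}p)^{-1})$ fails at $|U|=(n^{k-2}p)^{-1}\zeta^{1/2}$.

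The skeleton can be repaired while keeping your approach: for an $s$-tuple replace $S$ by the number of realized vertex--$U$ adjacencies $S''=\sum_{i=1}^{s}\sum_{u\in U}\mathbbm{1}\big[\exists e\in E' \text{ with } \{u,v_i\}\subseteq e \text{ and } e\cap\{v_1,\dots,v_s\}=\{v_i\}\big]$. Because each $k$-set of $E'$ contains exactly one vertex of $U$ and, after the restriction, exactly one tuple vertex, the $k$-set families underlying different pairs $(i,u)$ are disjoint, so these indicators are genuinely independent; moreover $\mathbb{E}[S'']=(1-o(1))s|U|\binom{n}{k-2}p=\Theta(s\zeta^{1/2})$ (the restriction costs only a $1-O(ks/n)$ factor since $s\le m=o(n)$), and now $\bigcap_i\{N_{v_i}<r\}$ does imply $S''<rs$, so your Theorem~\ref{thm:indconc} step and the factorial-moment calculation go through essentially unchanged. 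For comparison, the paper avoids tuples altogether: it orders the vertices, builds a Doob-type martingale on the indicators $Z_i$, uses the FKG inequality in a conditional space to dominate each conditional success probability by the unconditioned one, bounds that by $e^{-\zeta^{1/4}}$ using exactly the independent distinct-neighbour indicators $\mathcal{N}_u$, $u\in U$, and concludes with Azuma's inequality and a stopping time; your (corrected) route buys a more elementary, union-bound-free-of-martingales argument at the price of the tuple bookkeeping.
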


\begin{proof}
Consider an arbitrary ordering on vertices in $V\setminus (U\cup V')$, and denote the $i$-th element by $v_i$. Let $Z_i$ be the indicator random variable that $v_i$ has less than $r$ neighbours in $U$. 
Define the martingale $M_{0}=0$ and for $i\ge 1$
$$M_{i}=M_{i-1}+Z_{i}-\mathbb{E}[Z_{i}\mid Z_1,\ldots,Z_{i-1}].$$
Let $(z_1,...,z_{i-1})\in \{0,1\}^n$ such that $\sum_{j=1}^{i-1}z_j=o(n)$ and for $\ell=0,1$ denote by $W_\ell=\{j<i:z_j=\ell\}$. Let $\Phi$ be the set of $k$-sets, which contain a vertex in $U$, a vertex $v_j$ with $j< i$ such that $z_j=1$, and no vertex in $V'$, that is, $\Phi$ is the set of $k$-sets which are exposed and contain a vertex in $W_1$. Finally for $\Psi\subseteq \Phi$ let $\mathcal{E}_\Psi$ be the event that $\infedge'(t+1)\cap\Phi=\Psi$.
Then, for any $\Psi$ such that $\mathcal{E}_{\Psi}\subseteq \bigwedge_{j\in W_1}Z_j=z_j$, by applying the FKG inequality (Thyeorem~\ref{thm:FKG}) in the conditional probability space $\mathcal{E}_{\Psi}$ we have
$$
\mathbb{E}\left[Z_{i}\mid \bigwedge_{j=1}^{i-1}Z_j=z_j, \mathcal{E}_{\Psi}\right]=\mathbb{E}\left[Z_{i}\mid \bigwedge_{j\in W_0}Z_j=z_j, \mathcal{E}_{\Psi}\right]\le \mathbb{E}[Z_{i}\mid \mathcal{E}_{\Psi}]\le \mathbb{E}[Z_{i}\mid \mathcal{E}_{\emptyset}],
$$
where in the last step we use that removing hyperedges from the hypergraph increases the probability of the event $\{Z_i=1\}$. As this holds for every $\Psi\subseteq\Phi$ satisfying $\mathcal{E}_{\Psi}\subseteq \bigwedge_{j\in W_1}Z_j=z_j$ and exposing the hyperedges within $\Phi$ determines the values of $\{Z_j\}_{j\in W_1}$ we also have
\begin{equation}\label{eq:Zexp1}
\mathbb{E}\left[Z_{i}\mid \bigwedge_{j=1}^{i-1}Z_j=z_j\right]\le \mathbb{E}[Z_{i}\mid \mathcal{E}_{\emptyset}].    
\end{equation}

For $u\in U$ let $\mathcal{N}_{u}$ be the event that $u$ is a neighbour of $v_i$. 
Since the event $\mathcal{E}_{\emptyset}$ excludes every hyperedge, which contains one of tbe $o(n)$ vertices associated with $W_1$ we have
$$\mathbb{P}[\mathcal{N}_{u}\mid \mathcal{E}_{\emptyset}]\ge 1-(1-p)^{(1-o(1))\binom{n}{k-2}}\ge 1-\exp\left((1-o(1))\binom{n}{k-2}p\right)=(1-o(1))\binom{n}{k-2}p.$$
Note that in the conditional space $\mathcal{E}_{\emptyset}$ the events $\mathcal{N}_u$ are mutually independent, as each hyperedge contains one vertex in $u$. In addition a vertex can have at most $r-1$ neighbours in $U$ when $Z_{i}=1$. Since the expected number of vertices $u\in U$ for which $\mathcal{N}_u$ holds is $\Omega(\zeta^{1/2})$ by Theorem~\ref{thm:indconc} we have
\begin{equation}\label{eq:Zexp2}
\mathbb{P}[Z_i=1\mid \mathcal{E}_{\emptyset}]\le e^{-\zeta^{1/4}}.
\end{equation}

Next we introduce a stopping time for the martingale
$$M_i'=\left\{\begin{array}{ll}
M_i &\mbox{if } \sum_{j=1}^{i-1}Z_j\le (na_c)^{1/2}+ne^{-\zeta^{1/4}} \quad \mbox{and} \quad  M_{i-1}'\le (n a_c)^{1/2}\\
M_{i-1}' &\mbox{if } \sum_{j=1}^{i-1}Z_j> (n a_c)^{1/2}+ne^{-\zeta^{1/4}} \quad \mbox{or } \quad M_{i-1}'> (n a_c)^{1/2}.
\end{array}
\right.$$
Azuma's inequality (Theorem~\ref{thm:azuma}) implies that
$$\mathbb{P}[M'_{|V'\setminus U|}> (na_c)^{1/2}]\le \exp\left(-a_c/2\right)$$
and thus we have with probability $1- e^{-a_c/2}$ that $M_i'\le (n a_c)^{1/2}$ for every $1\le i\le |V\setminus (U\cup V')|$.

Now assume $M_i'\le (n a_c)^{1/2}$ for every $1\le i\le |V\setminus (U\cup V')|$. We will show inductively that $M_i=M_{i}'$ for every $0\le i \le |V\setminus (U\cup V')|$.
Clearly this holds if $i=0$. 
Using the induction hypothesis for $i-1$ together with \eqref{eq:Zexp1} and \eqref{eq:Zexp2} implies
$$\sum_{j=1}^{i-1}Z_j \le  (n a_c)^{1/2}+ne^{-\zeta^{1/4}},$$
and thus $M_i=M_i'$. Using $M_{|V\setminus (U\cup V')|}=M_{|V\setminus (U\cup V')|}'\le (na_c)^{1/2}$ an analogous argument gives
$$\sum_{j=1}^{|V\setminus (U\cup V')|}Z_j \le  (n a_c)^{1/2}+ne^{-\zeta^{1/4}}=o(n),$$
completing the proof.
\end{proof}

\subsection{Proof of Theorem~\ref{thm:hypergraph}\,\ref{case:super} }

\begin{proof}[Proof of Theorem~\ref{thm:hypergraph}~\ref{case:super}]
	Recall that $\zeta=(n^{k-1}p)^{1/(r+1)}$ and note that $(n^{k-2}p)^{-1}/\zeta=\omega(a^*)$. Denote by $t_0$ the first step $t$ such that $|\uinfec(t)\setminus \act(t)|\geq (n^{k-2}p)^{-1}/\zeta$. 
	We will apply Lemma~\ref{lem:uindstep} repeatedly until step $t_0$. 
	Note that if we assume that the statement of Lemma~\ref{lem:uindstep} holds for every step $\tau\le t$ for some $t<t_0-1$ then, as this implies $|\act(t)|=\nuinfecappr(t)$, the conditions of Lemma~\ref{lem:uindstep} are satisfied in the following step as well. By Lemma~\ref{lem:mininf} after a bounded number of steps we have $(\nuinfecappr(t+1)-\nuinfecappr(t))/(\nuinfecappr(t)-\nuinfecappr(t-1))>1+\nu$ for some fixed $\nu$. Therefore there exists a step $t_1$, such that $\nuinfecappr(t_1+1)-\nuinfecappr(t_1)\geq (n^{k-2}p)^{-1}/\zeta$. Under the assumption that  the statement of Lemma~\ref{lem:uindstep} holds for every step $\tau\le t_0$ we also have $t_0\le t_1$.
	
	Next we establish the probability that the statement of Lemma~\ref{lem:uindstep} holds in every step up to $t_0$.
	By the union bound this probability is at least
	$$1-\sum_{t=0}^{\infty} \exp(-\delta^2(\nuinfecappr(t+1)-\nuinfecappr(t))/16)-o(\exp(-(\nuinfecappr(t)-\nuinfecappr(t-1))).$$
	Again using the fact that after a bounded number of steps we have that $(\nuinfecappr(t+1)-\nuinfecappr(t))/(\nuinfecappr(t)-\nuinfecappr(t-1))>1+\nu$ for some fixed $\nu$ and since Lemma~\ref{lem:mininf} implies $\nuinfecappr(t+1)-\nuinfecappr(t)=\Omega(a^*)$ we conclude
	$$1-\sum_{t=0}^{\infty} \exp(-\delta^2(\nuinfecappr(t+1)-\nuinfecappr(t))/16)-o(\exp(-(\nuinfecappr(t)-\nuinfecappr(t-1)))=1-\exp(-\Omega(a^*)).$$

    By Claim~\ref{clm:smallc} we have $|\act(t_0)|=\nuinfecappr(t_0)=o((n^{k-2}p)^{-1})$, which together with Lemma~\ref{lem:uindstep}\,(III) implies $|\uinfec(t_0)|\le k|\infedge(t_0)|+|\uinfec(0)|=o(n)$. 
    
    Next we apply Lemma~\ref{lem:penultimatestep} where $U$ is an arbitrary $(n^{k-2}p)^{-1}/\zeta$ sized subset of $\uinfec(t_0)\setminus \act(t_0)$ and $V'=\uinfec(t_0)$. Thus with probability $1-\exp(-a^*)$ at least $(n^{k-2}p)^{-1}\zeta^{1/2}$, but at most $o(n)$ additional vertices become infected. 
    
    As the number of infected vertices at the end of step $t_0$ was $o(n)$ and in step $t_0+1$ this was increased by an additional $o(n)$ infected vertices the result follows from Lemma~\ref{lem:finalstep}.
\end{proof}

{\bf Acknowledgements.} The first and second authors are supported in part by the Austrian Science Fund (FWF) [10.55776/\{W1230, DOC183, I6502\}]. For the purpose of open access, the author has applied a CC BY public copyright licence to any Author Accepted Manuscript version arising from this submission.

\bibliographystyle{plain}
\bibliography{referencehyp}

\appendix
\section{Proof of Lemma~\ref{lem:negcorr}}\label{sec:conc}
	
		In order to prove the lemma we need the following results.
		
		\begin{lemma}\label{chtub}[\cite{MR1678578}, Lemma 2.4]
			For all $x\geq 0$,
			$$(1+x)\ln(1+x)-x\geq \frac{3x^2}{6+2x}.$$
		\end{lemma}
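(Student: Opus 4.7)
The plan is a straightforward reduction to a second-derivative sign check. Define
\[
f(x) := (1+x)\ln(1+x) - x - \frac{3x^2}{6+2x},
\]
and observe that the claim is equivalent to $f(x) \geq 0$ for all $x \geq 0$. By direct substitution $f(0) = 0$, so it suffices to show $f'(x) \geq 0$ on $[0,\infty)$.

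Differentiating once gives
\[
f'(x) = \ln(1+x) - \frac{6x(6+x)}{(6+2x)^2},
\]
where the second term is obtained by applying the quotient rule to $3x^2/(6+2x)$. A second substitution shows $f'(0) = 0$, so it further suffices to prove $f''(x) \geq 0$ on $[0,\infty)$. Here a pleasant simplification occurs: expanding the derivative of $6x(6+x)/(6+2x)^2$ and cancelling leaves a constant numerator, yielding
\[
f''(x) = \frac{1}{1+x} - \frac{27}{(3+x)^3},
\]
after rewriting $(6+2x)^3 = 8(3+x)^3$.

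The inequality $f''(x) \geq 0$ is then equivalent to $(3+x)^3 \geq 27(1+x)$, and the key algebraic observation is the clean factorisation
\[
(3+x)^3 - 27(1+x) = x^3 + 9x^2 = x^2(x+9),
\]
which is manifestly nonnegative on $[0,\infty)$. Chaining the conclusions $f''\geq 0$ on $[0,\infty)$, $f'(0)=0$, and $f(0)=0$ yields $f(x)\geq 0$ for all $x\geq 0$, which is the desired inequality.

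I do not anticipate any real obstacle; the only thing requiring some care is the computation of $f''$, where the apparent cubic-over-quartic expression for the derivative of $6x(6+x)/(6+2x)^2$ collapses to $216/(6+2x)^3$ once the numerator is expanded. Everything else is a routine monotonicity argument starting from matching boundary values at $x=0$.
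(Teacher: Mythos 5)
Your proof is correct. Note that the paper itself offers no proof of this lemma: it is quoted verbatim as Lemma~2.4 of McDiarmid's survey \cite{MR1678578} and used as a black box in Appendix~A, so there is no internal argument to compare against. Your self-contained two-derivative argument checks out in every step: $f(0)=f'(0)=0$, the derivative of $3x^2/(6+2x)$ is indeed $6x(6+x)/(6+2x)^2$, the second derivative collapses to $\frac{1}{1+x}-\frac{216}{(6+2x)^3}=\frac{1}{1+x}-\frac{27}{(3+x)^3}$ as you say, and the factorisation $(3+x)^3-27(1+x)=x^2(x+9)\ge 0$ is correct, so $f''\ge 0$ on $[0,\infty)$ and the conclusion follows by integrating twice from the matching boundary values. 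This gives the paper a fully elementary, verifiable proof of the cited inequality, which is a reasonable thing to supply even though the authors chose simply to cite it.
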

		
		\begin{lemma}\label{McDvar}[\cite{MR1678578}, Lemma 2.8]
			Let $g(x)=(e^x-1-x)/x^2$ if $x\neq 0$. Then the function $g$ is increasing and if the random variable $X$ satisfies $\mathbb{E}[X]=0$ and $X\leq b$, then 
			$$\mathbb{E}[e^X]\leq e^{g(b)\mathrm{Var}(X)}.$$
		\end{lemma}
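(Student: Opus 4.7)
The lemma has two assertions: (i) $g(x) = (e^x-1-x)/x^2$ is monotonically increasing on $\mathbb{R}\setminus\{0\}$, and (ii) the stated moment generating function bound for mean-zero random variables bounded above by $b$. My plan is to prove (i) via a direct derivative computation, and then derive (ii) as a short corollary by comparing $e^X$ pointwise to a quadratic polynomial in $X$.

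For (i), I would compute
\[
g'(x) = \frac{(x-2)e^x + x + 2}{x^3},
\]
and set $f(x) := (x-2)e^x + x + 2$. One checks $f(0)=0$, $f'(x) = (x-1)e^x + 1$ with $f'(0)=0$, and $f''(x) = xe^x$, which has the same sign as $x$. Hence $f'$ is decreasing on $(-\infty,0]$ and increasing on $[0,\infty)$, so it attains its global minimum $0$ at $x=0$, giving $f' \geq 0$ everywhere. Thus $f$ is non-decreasing on $\mathbb{R}$, and combined with $f(0)=0$ this forces $f(x)$ to have the same sign as $x$ on $\mathbb{R}\setminus\{0\}$. Since $x^3$ also shares this sign, we conclude $g'(x) = f(x)/x^3 \geq 0$ on $\mathbb{R}\setminus\{0\}$, establishing monotonicity.

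For (ii), the definition of $g$ rearranges into the pointwise identity $e^x = 1 + x + g(x)x^2$ for $x \neq 0$ (and trivially at $x=0$). Because $g$ is increasing and $X \leq b$ almost surely, $g(X) \leq g(b)$ wherever $X \neq 0$; multiplying by $X^2 \geq 0$ and adding $1+X$ yields $e^X \leq 1 + X + g(b)X^2$ almost surely. Taking expectations and using $\mathbb{E}[X]=0$ together with $\mathbb{E}[X^2] = \mathrm{Var}(X)$ gives
\[
\mathbb{E}[e^X] \leq 1 + g(b)\mathrm{Var}(X) \leq e^{g(b)\mathrm{Var}(X)},
\]
where the final step is the standard inequality $1+y \leq e^y$.

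The main technical hurdle is the monotonicity of $g$ on the negative half-line. A power-series approach via $g(x) = \sum_{k\geq 0} x^k/(k+2)!$ handles $x>0$ immediately but for $x<0$ requires careful pairing of alternating terms whose magnitudes are not uniformly ordered. The derivative-based argument above sidesteps this by routing everything through the convenient identity $f''(x) = xe^x$, which packages both halves of the real line into a single monotone-convexity check.
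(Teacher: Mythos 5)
Your proof is correct. Note that the paper does not prove this lemma at all --- it is imported verbatim from McDiarmid's survey (Lemma~2.8 there), so there is no in-paper argument to compare against; your write-up is essentially the standard proof of that cited result: the pointwise identity $e^x=1+x+g(x)x^2$, monotonicity of $g$, $\mathbb{E}[X]=0$, and $1+y\le e^y$. The only detail worth adding is that your derivative computation shows $g'\ge 0$ separately on $(-\infty,0)$ and $(0,\infty)$; to conclude $g(X)\le g(b)$ when $X<0<b$ you should remark that $g$ extends continuously to $0$ with value $1/2$ (e.g.\ from $g(x)=\sum_{k\ge 0}x^k/(k+2)!$), so the monotonicity passes across the origin. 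With that one-line addition the argument is complete.
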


	\begin{proof}[Proof of Lemma~\ref{lem:negcorr}]
		By Markov's inequality we have for any $h>0$
		$$\mathbb{P}[X\geq \mathbb{E}[\widehat{X}]+t]=\mathbb{P}\left[e^{hX}\geq e^{h(\mathbb{E}[\widehat{X}]+t)}\right]\\
		\leq \frac{\mathbb{E}[e^{hX}]e^{-h \mathbb{E}[\widehat{X}]}}{e^{ht}}.$$
		Due to the boundedness of $X$ we have
		$$\mathbb{E}[e^{hX}]=\mathbb{E}\left[\sum_{j=0}^{\infty} \frac{t^j X^j}{j!} \right]=\sum_{j=0}^\infty \frac{t^j \mathbb{E}[X^j]}{j!}.$$
		
		Now $X^j=(\sum_{i=0}^n X_i)^j$ is the sum of terms of the form $\prod_{i\in I} X_i$ for some $I\subseteq [n]$. Hence by linearity of expectation and our assumptions we have $\mathbb{E}[X^j]\leq \lambda\mathbb{E}[\widehat{X}]^j$ implying:
		$$\mathbb{E}[e^{hX}]\leq \lambda \sum_{j=0}^\infty \frac{t^j \mathbb{E}[\widehat{X}]^j}{j!}=\lambda\mathbb{E}[e^{h\widehat{X}}].$$
		
		Due to the independence of the $\widehat{X}_i$ we have:
		$$\mathbb{P}[X\geq \mathbb{E}[\widehat{X}]+t]\leq \lambda \frac{\mathbb{E}[e^{h\widehat{X}}]e^{-h \mathbb{E}[\widehat{X}]}}{e^{ht}}=\lambda \frac{\prod_{i=1}^n \mathbb{E}[e^{h(\widehat{X}_i-\mathbb{E}[\widehat{X}_i])}]}{e^{ht}}.$$
		Since $\widehat{X}_i$ is an indicator random variable, we have $h\widehat{X}_i\leq h$ and Lemma~\ref{McDvar} implies 
		$$\mathbb{E}[e^{h(\widehat{X}_i-\mathbb{E}[\widehat{X}_i])}]\leq e^{g(h)\mathrm{Var}(h\widehat{X}_i)}=e^{g(h)h^2\mathrm{Var}(\widehat{X}_i)}.$$
		
		Setting $h=\ln(1+t/\mathrm{Var}(\widehat{X}))$ gives us that:
		\begin{align*}
		\mathbb{P}[X\geq \mathbb{E}[\widehat{X}]+t]
		&\leq \lambda \exp\left(g(h)h^2\mathrm{Var}(\widehat{X})-h t \right)\\
		&=\lambda \exp\left(\left(\frac{t}{\mathrm{Var}(\widehat{X})}-h\right)\mathrm{Var}(\widehat{X})-h t \right)\\
		&=\lambda \exp\left(t-\ln\left(1+\frac{t}{\mathrm{Var}(\widehat{X})}\right)\mathrm{Var}(\widehat{X})- \ln\left(1+\frac{t}{\mathrm{Var}(\widehat{X})}\right) t \right)\\
		&=\lambda \exp\left(-\mathrm{Var}(\widehat{X})\left(\left(1+\frac{t}{\mathrm{Var}(\widehat{X})}\right)\ln\left(1+\frac{t}{\mathrm{Var}(\widehat{X})}\right)- \frac{t}{\mathrm{Var}(\widehat{X})} \right)\right),\\
		\end{align*}
	where we substitute $g(h)$ in the second step and all other occurrences of $h$ in the third step. 
		The result follows from Lemma~\ref{chtub}.
	\end{proof}
\end{document}